\documentclass[12pt]{amsart}
\usepackage[dvips]{color}
\usepackage{amsmath}
\usepackage{amsxtra}
\usepackage{amscd}
\usepackage{amsthm}
\usepackage{amsfonts}
\usepackage{amssymb}
\usepackage{eucal}
\usepackage{epsfig}
\usepackage{graphics}
\usepackage{graphicx}
\usepackage{accents}
\usepackage{dynkin-diagrams}
\usepackage{comment}
\usepackage{enumitem}

\usepackage{mathtools}
\usepackage{tikz-cd}
\usepackage{mathtools}
\usepackage{pgf}
\usetikzlibrary{arrows, matrix}

\usepackage{here} 
\usepackage{simplewick}
\textwidth=18cm
\textheight=22cm
\hoffset=-2.8cm
\baselineskip=18pt plus 3pt

\numberwithin{equation}{section}
\newtheorem{thm}{Theorem}[section]
\newtheorem{prop}[thm]{Proposition}
\newtheorem{lem}[thm]{Lemma}

\newtheorem{conj}[thm]{Conjecture}

\newcommand{\nn}{\nonumber}

\theoremstyle{definition}


\newcommand{\C}{{\mathbb C}}
\newcommand{\Z}{{\mathbb Z}}


\newcommand{\cW}{\mathcal{W}}
\newcommand{\Sb}{\mathfrak{S}}






\newcommand{\bs}{\boldsymbol}

\newcommand{\gl}{\mathfrak{gl}}

\newcommand{\sln}{\mathfrak{sl}}






\newcommand{\sss}{\textsf{s}}

\newcommand{\ssy}{\textsf{y}}


\newcommand{\res}{{\rm res}}
\newcommand{\Res}{\mathop{\rm res}}


\newcommand{\on}{\operatorname}
\newcommand{\mc}{\mathcal}

\newcommand{\cont}[2]{\contraction[1ex]{}{#1}{}{#2} #1 #2}

 \newcommand{\Trn}{\on{Trn}}
 
\begin{document}

\begin{title}[$qq$-characters and vertex operators]
{Combinatorics of vertex operators and deformed $W$-algebra of type D$(2,1;\alpha)$}
\end{title}
\author{B. Feigin, M. Jimbo, and E. Mukhin}
\address{BF: National Research University Higher School of Economics,  101000, Myasnitskaya ul. 20, Moscow,  Russia, and Landau Institute for Theoretical Physics, 142432, pr. Akademika Semenova 1a, Chernogolovka, Russia
}
\email{bfeigin@gmail.com}
\address{MJ: Department of Mathematics,
Rikkyo University, Toshima-ku, Tokyo 171-8501, Japan}
\email{jimbomm@rikkyo.ac.jp}
\address{EM: Department of Mathematics,
Indiana University Purdue University Indianapolis,
402 N. Blackford St., LD 270, 
Indianapolis, IN 46202, USA}
\email{emukhin@iupui.edu}


\begin{abstract} 
We consider sets of screening operators with fermionic screening currents. We study sums of vertex operators which formally commute with the screening operators assuming that each vertex operator has rational contractions with all screening currents with only simple poles. We develop and use the method of $qq$-characters which are combinatorial objects described in terms of deformed Cartan matrix. We show that each $qq$-character gives rise to a sum of vertex operators commuting with screening operators and describe ways to understand the sum in the case it is infinite. 

We discuss combinatorics of the $qq$-characters and
their relation to the $q$-characters of representations of quantum groups.  

We provide a number of explicit examples of the $qq$-characters with the emphasis on the case of D$(2,1;\alpha)$. We describe a relationship of the examples to various integrals of motion.
\end{abstract}

\keywords{$qq$-characters, $q$-characters, vertex operators, screening operators.}

\maketitle

\section{Introduction}
It is widely acknowledged that $W$-algebras form a fundamental class of conformal field theories. A lot of information has been collected but the picture is far from being complete. It has long been known that $W$-algebras have nontrivial deformations,  \cite{AKOS}, \cite{FR1}. Recently the interest to these deformations was revived due to discovered connections to gauge theories and integrable systems, \cite{FKSW}, \cite{N}, \cite{KP1}, \cite{KP2}.

We are interested in the deformed $W$-algebras since they possess families of commuting operators closely connected to affine XXZ models, see \cite{FJM2}, \cite{FJMV}. 

As in the undeformed case, the deformed $W$-algebras are generated by currents commuting with a set of screening operators. 
The screening operators are integrals of screening currents. In the undeformed case, screening currents can be quite complicated, which makes it difficult to understand the mechanism of commutation with the $W$-currents. In the deformed case one observes some simplifications as the screening currents are sums of several vertex operators and the combinatorics of  commutation with $W$-currents is often easy to track. This paper is an attempt to understand the $W$-currents combinatorially.

For simplicity, we restrict ourselves to the case when each screening current is a fermionic current written as a single vertex operator. Moreover, we assume that the $W$-currents are sums of vertex operators, such that the contractions of each term with all screening currents are rational functions with at most simple poles. Then the commutator of $W$-current with a screening current is a sum of delta functions multiplied by vertex operators which, after integrating the delta functions out, have to cancel in pairs. Such a cancellation pattern corresponds to a combinatorial object called $qq$-characters. In some cases, $qq$-characters were observed in \cite{N}, \cite{KP1}, \cite{KP2}, \cite{FJMV}. In this paper we give a general definition of the $qq$-characters and study them.

The $qq$-characters in flavor are similar to the $q$-characters of level zero representations of quantum affine algebras, \cite{FR2}, \cite{FM}. The $q$-character of a module $V$ is a Laurent polynomial with non-negative integer coefficients essentially given by the formal sums of collections of rational functions which are eigenfunctions of Cartan currents $K_i(z)$ in the module. If the matrix element of the generating current $F_i(z)$ between two eigenvectors is non-trivial, then it is always given by the delta function $\delta(u/z)$ (and, in general, its derivatives) multiplied by a constant, where $u$ is a pole of the eigenvalue of $K_i(z)$ on both vectors. Then the corresponding eigenvalues are related by a simple factor $A_{i,u}^{-1}$ called ``affine root" which allows us to construct and to study the $q$-characters combinatorially.

Similarly, the $qq$-characters are Laurent polynomials with non-negative integer coefficients essentially given by the formal sums of collections of rational functions which are contractions of vertex operators with screening currents $S_i(z)$. If the commutator of $S_i(z)$ with two vertex operators contains delta functions which cancel after summing and taking the integral, then the corresponding contractions should also be related by a simple factor. This allows us to define analogs of affine roots and study $qq$-characters in a way similar to $q$-characters.

However, there are important differences. First, even in the case of \cite{FR1}, which is directly related to quantum affine algebras, one has an extra, ``elliptic" parameter which participates in the contraction of screening currents among themselves. This leads to a much larger set of examples of families of screening, described by deformed Cartan matrices, see Appendix C in \cite{FJMV}. 

Second, the pairwise cancellations of terms do not necessarily correspond to multiplication by one affine root, but also to special products (strings) of affine roots. We call such cancellation patterns ``blocks". The  option of having blocks gives a much larger set of $qq$-characters related to a given deformed Cartan matrix compared to the $q$-characters.

Third, since quantum affine algebras are Hopf algebras and the main part of comultiplication of $K_i(z)$ is  $K_i(z)\otimes K_i(z)$, the
$q$-characters can be multiplied as Laurent polynomials. But the $qq$-characters do not have this structure. Instead, there is a combinatorial fusion which corresponds to the fusion of currents.

Thus $qq$-characters and $q$-characters are different in general, and it is not clear if there is a conceptual explanation of the similarity of the combinatorics. 
We do show that some special class of $qq$-characters corresponds to $q$-characters, see Theorem \ref{module thm}. Such $qq$-characters are one-parameter deformations of the $q$-characters which is the origin of the name of the $qq$-characters.

\medskip

Our principal result is that given a $qq$-character one can construct a current given as a sum of vertex operators which  formally commutes with the screening operators, see Theorem \ref{prop:qq-coeff}. We expect that if a sum of vertex operators whose contractions with screening currents are rational functions with at most simple poles, commutes with the screening operators then it comes from a $qq$-character.

\medskip

With the general knowledge of $qq$-characters and Theorem \ref{prop:qq-coeff}, one can generate a large number of interesting examples. Many cases, in particular,  all affine-type examples involve infinite $qq$-characters and, therefore, infinite sums of vertex operators. We observe that such sums often have a periodicity property, giving rise to well-defined integrals which are honest operators commuting with the screenings.

The most important examples for us are the cases of three screening currents of D$(2,1;\alpha)$ type and its affine analog given by four screening currents, since they are connected to the $(\hat{\mathfrak{sl}}_2\times \hat{\mathfrak{sl}}_2)/\hat{\mathfrak{sl}}_2$ coset theory, see \cite{BL}, \cite{FJM1}.
In this case the simplest $qq$-characters correspond to the 6  vector representations (and 12 of them in the affine case) which are infinite-dimensional.

In \cite{FJM1} we revealed  three copies of  quantum toroidal $\gl_2$ algebra whose transfer matrices produce three commuting families of integrals of motion. We show that the first integrals in each family come from the $qq$-characters corresponding to the affine vector representations. We expect that other integrals can also be obtained that way.

For special values of the parameter $\alpha$, the deformed Cartan matrix of type D$(2,1;\alpha)$ becomes that of types  $\mathfrak{osp}_{4,2}$ or $\gl_{2,2}$ which have finite-dimensional vector representations of dimensions $6$ and $4$, respectively. We show that there exist two series of such resonances of parameters depending on $k\in\Z_{>0}$, for the first one we have a finite $qq$-character with $4k+2$ terms and for the second one we have two finite $qq$-characters with $4k$ terms. It would be interesting to understand the conformal limit of the corresponding deformed $W$-algebras. 

\medskip

In this paper we consider only ``tame" $qq$-characters where all screening currents are fermionic type with the same elliptic parameter. The structure of the $qq$-characters in the presence of bosonic type screenings is similar, but combinatorics is more intricate. Roughly speaking, this happens because the representation theory of $U_q\hat{\gl}_2$ is more complicated than that of $U_q\hat{\gl}_{1,1}$. We plan to address this issue in the future publications. 

At the moment, there are many unanswered questions. We do not have a complete list of deformed Cartan matrices which admit non-trivial finite tame $qq$-characters. We have no classification of the tame $qq$-characters even in the simplest case of $\gl_{2,1}$. The structure of non-tame $qq$-characters (which correspond to poles of higher order in the contractions) also deserves an additional study.

\medskip

The paper is constructed as follows. We start in Section \ref{sec: qq} with the combinatorial part describing the definition and properties of tame $qq$-characters. We discuss the algorithm of construction of tame $qq$-characters and the fusion procedure. Sections \ref{sec:examples} and \ref{sec:18,66,130} are devoted to various examples of $qq$-characters. In Section \ref{sec q char} we discuss the connection of $qq$-characters to $q$-characters. Section \ref{sec:freefield} is the vertex operator part of the paper.

\section{The $qq$-characters.}\label{sec: qq}
In this section we describe the combinatorics of $qq$-characters. We restrict ourselves to the case of fermionic roots of the same kind. The $qq$-characters in the bosonic situation were introduced in \cite{N}, \cite{KP1}, more general $qq$-characters appeared in \cite{FJMV}.

\subsection{The terminology}\label{sec terminology}
Let $q$ be a variable. We call it an elliptic variable. We also prepare a finite number of other independent variables $q_1,q_2,...,$ and work over the ring $R$ of Laurent polynomials in all variables with integer coefficients. A monomial is an element of $R$ of the form $q^a\prod_iq_i^{a_i}$, where $a,a_i\in\Z$. Note that in our convention the coefficient of a monomial is one, e.g. $2q$ is not a monomial. Then the set of all monomials is a multiplicative group inside $R$. 

\medskip

We start with a general deformed Cartan matrix of fermionic type. Let $I$ be a set of integers of cardinality $r$. Elements of $I$ will be referred to as ``colors".
We call a symmetric, non-degenerate, $r\times r$ matrix $C=(c_{ij})_{i,j\in I}$ with entries in $R$ a deformed Cartan matrix of fermionic type if all entries of $C$ are of the form $c_{ij}=\sigma_{ij}-\sigma_{ij}^{-1}$ where $\sigma_{ij}$ are monomials, and all diagonal entries are $c_{ii}=q-q^{-1}$.

In particular, we have $\sigma_{ij}=\sigma_{ji}$, $\sigma_{ii}=q$,  and  if $c_{ij}=0$ then $\sigma_{ij}=1$.

In this text we consider only deformed Cartan matrices of fermionic type; for brevity we call them simply Cartan matrices.

While it is not clear what a most general reasonable definition of a deformed Cartan matrix is, there are several studied classes. In \cite{FR2},  deformed Cartan matrices are associated to Dynkin diagrams. In \cite{KP2}, deformed Cartan matrices are associated to quivers. In \cite{FJMV}, deformed Cartan matrices are associated to a class of representations of some quantum toroidal algebras. The majority of the explicit examples of Cartan matrices in this paper are given by the construction of \cite{FJMV}, the examples of D$(2,1;\alpha)$ and $\hat{\textrm D}(2,1;\alpha)$  are related to \cite{FJM1}.
 
\medskip 

Next, we prepare some formal rings and language to work with them. The $qq$-characters will be elements of such rings.

Let $\mathcal Y$ be a ring of Laurent polynomials with integer coefficients in commutative formal variables $Y_{i,\sigma}$  where $i\in I,$ and $\sigma\in R$ is a monomial.\footnote{We follow notation of \cite{FJMV} which is different from the usual $q$-character notation. Variable $Y_i$ should be compared to variable $X_i^{-1}$ in \cite{FJMM1}, \cite{FJMM2} and usual $Y_i$ variables are ratios of two $X_i$ variables.}

A monomial in $\mathcal Y$ is a finite product of generators $Y_{i,\sigma}^{\pm1}$. Clearly, the set of all monomials is a multiplicative group in $\mathcal Y$.
For a monomial  $m$ and $\chi\in\mc Y$ we write $m\in \chi$ if the coefficient of $m$ in $\chi$ is non-zero.
 
For each $i\in I$ define a $\Z$-grading $\deg_i$ of $\mathcal Y$ by setting $\deg_i Y_{j,\sigma}^{\pm1}=\pm\delta_{ij}$. We write $\deg(m)=(\deg_i(m))_{i\in I}$ and call it degree of $m\in\mathcal Y$.

For a monomial $\sigma_0\in R$, define the ring automorphism 
$$
\tau_{\sigma_0}:\  \mathcal Y \to \mathcal Y, \qquad Y_{i,\sigma}^{\pm 1} \mapsto Y_{i,\sigma_0\sigma}^{\pm1}.
$$
 We call the map $\tau_{\sigma_0}$ the shift by $\sigma_0$.

For a set $J\subset I$, we have a subring ${\mathcal Y}_J\subset \mathcal Y$ generated  by $Y_{j,\sigma}^{\pm1}$  $(j\in J)$. 
Define the surjective ring homomorphism 
$$
\rho_J:\ \mathcal Y \to {\mathcal Y}_J , \qquad Y_{i,\sigma}^{\pm1}\mapsto \begin{cases}
Y_{i,\sigma}^{\pm1} & (i\in J), \\
1 &(i\not\in J).
\end{cases} 
$$
We call $\rho_J$ the restriction map. 

\medskip
Some $qq$-characters will be infinite sums. We continue with the description of the corresponding extension of $\mathcal Y$.

Let $\tilde {\mathcal Y}$ be the space of formal sums of countably many monomials in $\mathcal Y$.  We have the inclusion of spaces $\mathcal Y\subset \tilde {\mathcal Y}$. Clearly, the space $\tilde {\mathcal Y}$ is a $\mathcal Y$-module. 

The subspace ${\tilde {\mathcal Y}}_J\subset \tilde{\mathcal Y}$ is the space of of formal sums of countably many monomials in ${\mathcal Y}_J$.

The ring maps $\tau_{\sigma_0}$ and $\rho_J$ are  extended to maps of vector spaces  
$\tau_{\sigma_0}:\ \tilde{\mathcal Y} \to \tilde{\mathcal Y}$ and  $\rho_J:\ \tilde{\mathcal Y} \to \tilde{\mathcal Y}_J$. Note that we use the same notation for these maps.

\medskip

We often deal with products of elements of $\mc Y$ or even $\tilde{\mc Y}$ where all participating generators are distinct and therefore there are no cancellation or combining of generators. We call such products generic. Here are the formal definitions.

We call a monomial $m$ in $\mathcal Y$ generic if $m$ is a product of distinct generators $Y_{i,\sigma}^{\pm1}$. In other words, in a generic monomial any generator $Y_{i,\sigma}$ can appear only in powers $-1,0$, or $1$. We call an element $\chi=\sum_s m_s\in\tilde{\mc Y}$ generic if all monomials $m_s$ are generic. 

The condition that we have no positive powers more than 1 is equivalent to the assumption of simple poles in the contractions, or the property of being tame, and it will be essential. The condition that there are no negative powers  smaller than minus one is added for convenience only and will not be used. In all non-trivial examples we consider such powers do not appear.

Two monomials $m_1,m_2$ are called mutually generic, if both $m_1,m_2$ are generic, $m_1m_2$, $m_1/m_2$ are generic. In other words, variables present in $m_1$ do not appear in $m_2$. In particular,  
multiplying $m_1$  by $m_2$, we encounter no cancellations.

Two series $\chi_1,\chi_2\in\tilde{\mc Y}$ are mutually generic if every monomial in $\chi_1$ is mutually generic with every monomial in $\chi_2$.

Note that if $\chi_1,\chi_2\in \tilde{\mc Y}$ are mutually generic then the product $\chi_1\chi_2\in \tilde{\mc Y}$ is well defined and generic.

\subsection{The definition of tame $qq$-characters}
Tame $qq$-characters associated to a deformed Cartan matrix are elements of $\tilde{\mc Y}$ with special combinatorial properties which we now describe.

Given a deformed Cartan matrix $C=(\sigma_{ij}-\sigma_{ij}^{-1})_{i\in I}$, define the affine roots $A_{j}$, $j\in I$, by the formula:
$$
A_j=\prod_{i\in I} Y_{i,\sigma_{ij}} Y_{i,\sigma_{ij}^{-1}}^{-1}.
$$
For a monomial $\sigma\in R$, we set $A_{j,\sigma}=\tau_\sigma A_j$.  The affine roots $A_{j,\sigma}$ are generic monomials in $\mc Y$ of degree zero.

We assumed that the deformed Cartan matrix $C$ is non-degenerate, it implies that  the affine roots $A_{j,\sigma}$ are all algebraically independent.

\medskip

We define elementary blocks. 
An elementary block of color $i$ and length $k+1$ is an element $B^{(k)}_{i}\in \mc Y$ which has the following properties
\begin{itemize}
    \item  the block $B^{(k)}_{i}$ is a sum of $k+1$ monomials, $B^{(k)}_{i}=m_0+\dots+m_{k}$;
    \item the monomial $m_j$ has the form 
    $$
    m_j=\bar m\bar m_j\prod_{0\leq s\leq k \atop s\neq j}Y_{i,q^{-k+2s}}, 
     $$
     where $\bar m_j$ is a generic monomial in variables $Y_{s,\sigma}^{\pm1}$, $s\neq i$, and $\bar m$ is a generic monomial in variables $Y_{i,\sigma}^{-1}$, $\sigma\neq q^a$, $a\in\{-k,-k+2,\dots,k\}$;
     \item the monomials $m_j$ are connected by the affine roots of color $i$:
      $m_{j+1}= m_j A_{i,q^{-k+2j+1}}^{-1}$.
\end{itemize}

We also define shifted elementary blocks $B^{(k)}_{i,\sigma}=\tau_\sigma B^{(k)}_{i} \in \mc Y$.

We note that elementary blocks look similar to $q$-characters of $U_q\hat{\mathfrak{sl}}_2$ irreducible evaluation modules. However, we work with a principally different case of fermionic roots. It is well known that all irreducible finite-dimensional $U_q\hat{\mathfrak{gl}}_{1,1}$ modules are tensor products of several two-dimensional evaluation modules and a one-dimensional module. Only blocks  $B^{(1)}_{i,\sigma}$ of length 2 and trivial blocks $B^{(0)}_{i,\sigma}$  have such form. We will discuss a connection to quantum group $q$-characters in Section \ref{sec q char}.

We call the top monomial $m_0$ in the block $B^{(k)}_{i}$ the $i$-dominant monomial  and the bottom monomial $m_k$ the $i$-anti-dominant monomial. We call a variable $Y_{i,\sigma}$ in a monomial $m$ in a block $B^{(k)}_{i}$ $i$-dominant (resp. $i$-anti-dominant) if  $m A_{i,\sigma q^{-1}}^{-1}$ (resp.  $m A_{i,\sigma q}$) is also in the same block $B^{(k)}_{i}$.

We consider products of mutually generic shifted elementary blocks. This is analogous to (but not the same as) taking tensor products of $U_q\hat{\mathfrak{sl}}_2$ evaluation modules which remain irreducible and tame.

\medskip

We are finally ready to define $qq$-characters.
A series $\chi\in\tilde{\mc Y}$ is called a tame $qq$-character if for all $i\in I$, the series $\chi$ is a sum of products of mutually generic shifted elementary blocks of color $i$. All $qq$-characters in this paper are tame, so we will simply call them $qq$-characters.

Clearly, a $qq$-character is always generic.  A shift $\tau_\sigma \chi$ of a $qq$-character $\chi$ is clearly a $qq$-character. If $J\subset I$ is such that the corresponding Cartan submatrix is non-degenerate, then the restriction $\rho_J\chi$ 
of a $qq$-character $\chi$ is clearly a $qq$-character.

A $qq$-character is called finite if it is a sum of finitely many monomials.

A sum of $qq$-characters is a  $qq$-character.
A  $qq$-character is called simple if it is not a sum of two non-zero tame $qq$-characters. 

The constants $\chi=n$ $(n\in\Z_{\geq 0})$ are trivially tame $qq$-characters.  More generally, any generic series $\chi\in\tilde{\mc Y}$ with non-negative coefficients containing only $Y_{i,\sigma}^{-1}$ is a tame $qq$-character. We call such $qq$-characters polynomial.

A product of two mutually generic $qq$-characters is a $qq$-character. A  degree zero $qq$-character $\chi$ is called prime if it is not a product of two degree zero $qq$-characters.

We call a  $qq$-character slim if it has degree zero and for each $i$ and for all occurring shifted elementary blocks $B_{i,\sigma}^{(k)}$ the length $k+1$ is at most $2$. Slim $qq$-characters are to be compared to $q$-characters, see Section \ref{sec q char}. Non-slim characters do not correspond to $q$-characters.

We call a monomial $m\in\mc Y$ $i$-linear if $\rho_{\{i\}}m= Y_{i,\sigma_1}Y_{i,\sigma_2}^{-1}$ for some (not necessarily distinct) monomials $\sigma_1,\sigma_2\in R$.  A monomial $m$ is linear if it is $i$-linear for all $i\in I$. We call a  $qq$-character $\chi$ linear if all monomials in $\chi$ are linear.
Linear $qq$-characters are slim.

\medskip

In general, a generic $\chi\in\tilde{\mc Y}$ can be written as a sum of products of mutually generic shifted elementary blocks of color $i$ in several ways. For example, it happens when the same monomial occurs several times. However, we expect that it does not occur for simple characters. We now prove the uniqueness under some technical assumption which is sufficient for our purposes.

We say monomial $m$ is $i$-connected to monomial $n$ if $n=mA_{i,\sigma}^{-1}$, $m$ contains $Y_{i,\sigma q}$, and $n$ contains $Y_{i,\sigma q^{-1}}$. 

Thus in a shifted elementary block $B_{i,\sigma}^{(k)}=m_0+\dots+m_k$, the monomial $m_j$ is connected to monomial $m_{j+1}$, $j=0,\dots, k-1$.

We call a set of monomials $m_j$, $j\in\Z$, an infinite chain of color $i$ if for all $j\in \Z$,
$m_j$ is $i$-connected to $m_{j+1}$.

\begin{lem}\label{unique}
Let $\chi$ be a $qq$-character. Assume that all monomials in $\chi$ are distinct. Assume further that $\chi$ has no infinite chains of color $i$. Then it can be written as a sum of  products of mutually generic shifted elementary blocks of color $i$ in the unique way.
\end{lem}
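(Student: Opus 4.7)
The plan is to extract a canonical block from $\chi$ and proceed by induction. First I would locate an $i$-maximal monomial $m^*\in\chi$, i.e., one with no incoming color-$i$ connection in $\chi$. The no-infinite-chains hypothesis provides such a monomial: any infinite backward chain from a given monomial, combined with the fact that any valid block decomposition supplies a forward extension at every monomial (namely the next monomial of the block it sits in), would produce a two-sided infinite chain in $\chi$, contradicting the hypothesis.

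In any valid decomposition $\chi=\sum_\ell P_\ell$ with $P_\ell=\prod_j B^{(k_j)}_{i,\tau_j}$, the monomial $m^*$ lies in a unique summand $P_\ell$ by distinctness of monomials. Writing $m^*=\prod_j m^{(j)}_{s_j}$ in the block decomposition of $P_\ell$, the absence of incoming $i$-connections to $m^*$ forces $s_j=0$ for every $j$, so $m^*$ is the product of the top monomials of the constituent blocks. I then reconstruct the blocks from $m^*$ by tracing forward chains in $\chi$: for each $\sigma$ such that $Y_{i,\sigma q}$ occurs in $m^*$ and $m^*A_{i,\sigma}^{-1}\in\chi$, extend the chain $m^*\to m_1\to\cdots\to m_r$ iteratively, with successive labels $\sigma,\sigma q^2,\sigma q^4,\ldots$, as long as each step lies in $\chi$. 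Each maximal such chain should coincide with one block of $P_\ell$, whose shift and length are determined by the labels. Running over all starting labels at $m^*$ partitions the outgoing $i$-connection data into blocks, so $P_\ell$ is canonically determined by $\chi$ and $m^*$ alone. The residue $\chi-P_\ell$ then satisfies the same hypotheses, and induction finishes the proof.

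The main obstacle is showing that the forward chains traced from $m^*$ remain inside a single block of $P_\ell$, rather than crossing into monomials of a different summand $P_{\ell'}$ along the way. The mutual genericity of the blocks within $P_\ell$ ensures that their color-$i$ shift supports are disjoint arithmetic progressions of ratio $q^2$, and the rigid form of $A_{i,\sigma}^{-1}$ across all colors, prescribed by the Cartan matrix $C$, forces each step of the traced chain to respect the block structure. Careful bookkeeping of the non-$i$ color contributions of $A_{i,\sigma}^{-1}$ against the $\bar m\,\bar m_j$ factorization of block monomials should rule out cross-summand outgoing edges from $m^*$ masquerading as within-block transitions, and thereby identify each block of $P_\ell$ unambiguously.
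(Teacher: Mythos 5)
Your overall strategy (peel off the uniquely determined product of color-$i$ blocks attached to an extremal monomial, subtract, and iterate) is the same as the paper's, but two steps in your write-up do not hold as written. First, the existence of an $i$-maximal monomial does not follow from your argument. You claim that any valid decomposition "supplies a forward extension at every monomial (namely the next monomial of the block it sits in)"; this is false for a monomial that is the product of the bottom monomials of its blocks, which has no outgoing color-$i$ connection at all. The hypothesis only excludes $\Z$-indexed (two-sided) chains, so it guarantees a monomial extremal in at least one direction, not necessarily one with no incoming connection: a backward-infinite half-line of blocks has no $i$-maximal monomial yet contains no infinite chain in the sense of the definition. The paper therefore treats both cases symmetrically — a monomial with no incoming connection is a product of $i$-dominant monomials, one with no outgoing connection is a product of $i$-anti-dominant monomials — and your proof must do the same; as written it covers only one case and derives its existence from an incorrect claim.

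Second, and more seriously, the identification of the summand $P_\ell$ containing $m^*$ is exactly the content of the lemma, and your chain-tracing leaves it unproved: your criterion is membership of the traced monomials in $\chi$, not in $P_\ell$, so a traced monomial could a priori coincide with a monomial of a different summand, and you only assert that "careful bookkeeping should rule out" such crossings. The paper sidesteps this entirely by determining the block data from the extremal monomial alone: mutual genericity forces each maximal string $Y_{i,\sigma}Y_{i,\sigma q^{2}}\cdots Y_{i,\sigma q^{2k-2}}$ of positive color-$i$ variables in $m^*$ to be the top string of a single block of length $k+1$ (if two blocks tiled one string, the block covering the upper piece would also use the variable one step of $q^{2}$ below it, which lies in the lower block, contradicting mutual genericity), and once the lengths and shifts are fixed, every monomial of the product is $m^*$ times a prescribed product of inverse affine roots. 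Hence the summand containing $m^*$ is the same in any decomposition, with no reference to which other monomials of $\chi$ happen to exist. If you wish to retain the tracing argument, you must actually carry out the missing step — e.g., genericity of the monomials of $\chi$ excludes spurious starts in the interior of a string (the candidate target would contain a squared variable), and the string/genericity analysis above excludes continuing past the bottom of a block — rather than leave it as an expectation.
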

\begin{proof}
Because of the assumptions, there exists a monomial $m\in\chi$ such that either $m$ is not $i$-connected to any other monomial in $\chi$ or no other monomial in $\chi$ $i$-connected to $m$. In the first case $m$ must be a product of $i$-dominant monomials and in the second case $m$ must be a product of $i$-anti-dominant monomials. In both cases, there is a uniquely determined product of blocks of color $i$ which has to be present in $\chi$. Subtracting this product and continuing to find such monomial $m$ in the remaining sum of monomials, we obtain the lemma.
\end{proof}
Note that since $A_{i,\sigma}$ are algebraically independent, we cannot have loops: if monomial $m_j$ is connected to monomial $m_{j+1}$ for $j=1,\dots, k$, then $m_k$ is not connected to $m_1$. Thus, any finite $qq$-character with distinct monomials satisfies the assumptions of the lemma.

\medskip

The next definitions do depend on the way the $qq$-character is written as a sum of products of mutually generic shifted elementary blocks of color $i$.  

If $\chi$ is a $qq$-character, then we call a monomial in $\chi$ $i$-dominant (resp. $i$-anti-dominant) if it is a product of $i$-dominant  (resp. $i$-anti-dominant) monomials in the blocks of color $i$. We call a monomial dominant (resp. anti-dominant) if it is $i$-dominant for all $i\in I$. We call a variable $Y_{i,\sigma}$ in a monomial $m\in\chi$  $i$-dominant (resp. $i$-anti-dominant) if it is $i$-dominant (resp. $i$-anti-dominant) in at least one of the blocks of color $i$.

\medskip

The $qq$-characters are visualized via its graphs. The graph of a $qq$-character $\chi$ is a colored directed graph whose vertices are monomials $m\in\chi$. There is an edge of color $i$ from a monomial $m$ to monomial $m'$, if $m$ and $m'$ belong to the same product of blocks of color $i$, if $m'=mA_{i,\sigma}^{-1}$ and if $m$ contains dominant variable  $Y_{i,\sigma q}$ while $m'$ contains anti-dominant variable $Y_{i,\sigma q^{-1}}$.  We  denote this situation by $m{\xrightarrow {i,\sigma}} m'$. 

A monomial is $i$-dominant if and only if in the graph there are no incoming edges of color $i$. A monomial is $i$-anti-dominant if and only if in the graph there are no outgoing edges of color $i$.

A $qq$-character $\chi$ is linear if and only if in the graph, for each $i\in I$ and each monomial $m\in \chi$ there is at most one edge of color $i$ with vertex $m$. (This edge can be incoming or outgoing.) 

\medskip

Clearly, every connected component of a graph of a  $qq$-character is a graph of a $qq$-character. Thus,
a $qq$-character is simple if and only if all of its graphs are connected. 

Let us repeat that, in general, a $qq$-character can have several graphs associated to it. We expect that the graph of a simple $qq$-character is unique. Due to Lemma \ref{unique} this is the case in all examples we consider in this text.

\subsection{The algorithm of constructing tame $qq$-characters.}\label{sec algorithm}
Simple degree zero $qq$-characters are rigid objects and can often be reconstructed from just one monomial. The algorithm is similar to the one used for $q$-characters of quantum affine algebras, see \cite{FM}.
On one hand it is somewhat simpler, since we are in the tame situation. On the other hand it is complicated by the absence of a good concept of dominant monomials, since we are in the superalgebra situation. Namely, we can say which monomial is $i$-dominant after the $qq$-character is constructed, but not before (as it was in the non-super case).

Every finite  $qq$-character has a dominant and an anti-dominant monomial. Every finite $qq$-character with a unique dominant (or anti-dominant) monomial is simple.

Suppose we have a generic monomial $m_+$ and would like to find a simple $qq$-character $\chi\in{\tilde {\mc Y}}$ such  that $m_+\in\chi$ and such that $m_+$ is a unique dominant monomial. Our algorithm starts with $\chi=m_+$ where all occurring $Y_{i,\sigma}$ in $m_+$ (in positive power) are called unmarked.

If $m_+$ contains no $Y_{i,\sigma}$, $\chi=m_+$ is a simple tame polynomial $qq$-character. 
Otherwise, we choose a maximal string of (unmarked) variables in $m_+$ of the form $Y_{i,\sigma}Y_{i,q^{2}\sigma}\dots Y_{i,q^{2k-2}\sigma}$. Here the word maximal means we have no (unmarked) $Y_{i,q^{-2}\sigma},Y_{i,q^{2k}\sigma}$ entering $m_+$. 

If $Y_{i,q^{-2}\sigma}^{-1}$ is in $m_+$, the algorithm fails, meaning no such $qq$-character exists. Otherwise,
we set $m_0=m_+$, and add to $\chi$ monomials $m_1,\dots,m_k$ so that we obtain a block of color $i$ of length $k+1$. For example,  $m_1=A_{i,q^{-1}\sigma}^{-1}m_0$ contains $Y_{i,q^{-2}\sigma}Y_{i,q^2\sigma}\dots Y_{i,q^{2k-2}\sigma}$, $m_2=A_{i,q\sigma}^{-1}m_1$ contains $Y_{i,q^{-2}\sigma}Y_{i,\sigma}Y_{i,q^4\sigma}\dots Y_{i,q^{2k-2}\sigma}$, etc.

In the monomials $m_0,m_1,\dots,m_k$
we mark all positive powers $Y_{i,q^{2j}\sigma}$, $j=-1,\dots,k-1$, and call all other new positive powers $Y_{s,\sigma_{is}^{-1}\sigma q^{2j+1}}$,  $s\neq i,$ in $m_1,\dots,m_k$ unmarked.

We call this process the expansion of a string in the $i$-th direction. 

If any of the monomials $m_1,\dots, m_k$ is not generic, the algorithm fails. Otherwise,
we continue in the same way. Namely, we choose an unmarked maximal string in any of the monomials (note that the marked generators are ignored) and expand it. 

In the process we follow two rules. 
First rule is that if one of the monomials we add during expansion already exists in $\chi$ and the positive powers participating in the expansion are all unmarked, then we mark them and do not add this monomial for the second time. 

Second rule is that we expand in the order of depth. Note that for any monomial $m'\in\chi$, $m_+/m'$ is a product of shifted affine roots. Since the affine roots are algebraically independent, the way to write $m_+/m'$  as a product of shifted affine roots is unique. We say $m'$ has depth $k$ if $m_+/m'$ is a product of $k$ shifted affine roots. The dominant monomial $m_+$ has depth zero. We expand it first. Then we expand all generated monomials of depth $1$, then of depth $2$ and so on.

Following these two rules, 
we proceed with the expansions until no unmarked positive powers is left.

Then, it is clear, that the algorithm either fails or produces a simple  $qq$-character with a unique dominant monomial $m_+$. 

We also note that the affine roots have degree zero. Therefore for all monomials $m'$ in the result we have $\deg_i m_+=\deg_i m_+'$,  $i\in I$.

\medskip

One can use another version of the algorithm, declaring  the initial monomial $m$ to be anti-dominant and expanding the strings in the other direction. Moreover, one can do a mixture: declare that the initial monomial $m$ is $i$-dominant, $i\in J$, where  $J\subset I$  is a subset of colors, and $i$-anti-dominant for $i\not\in J$. In such a way, we will be able to obtain infinite $qq$-characters  which have neither dominant nor anti-dominant monomials. 

\subsection{Truncation of $qq$-characters.}\label{sec truncation}
We describe a procedure which, given a $qq$-character, allows to produce a $qq$-character with a smaller number of terms.  We will use this procedure in Section \ref{sec 21}.

Suppose we have a $qq$-character $\chi$ obtained by the algorithm from a dominant monomial $m_+$. Let $m\in \chi$ be a monomial which was obtained after several expansions and which had an unmarked positive power $Y_{i,\sigma}$ when first obtained. 

Consider now the dominant monomial $m_+ Y_{i,\sigma}^{-1}$ and apply the algorithm. Then it proceeds the same way as the algorithm applied to $m_+$. But when we arrive at the monomial $m$, the unmarked positive power is cancelled and we do not do that expansion anymore. Therefore, the new $qq$-character will have less terms compared to $\chi$, it can be obtained from $\chi Y_{i,\sigma}^{-1}$ by dropping the appropriate terms. We call this $qq$-character the truncation of $\chi Y_{i,\sigma}^{-1}$ and denote it by $\Trn(\chi Y_{i,\sigma}^{-1})$.

\medskip

The truncation procedure is an analog of the  construction of finite type modules which are obtained by multiplying known modules by polynomial modules and taking the irreducible submodule,  see \cite{FJMM1}, \cite{FJMM2}. The finite type modules have properties  similar to finite-dimensional ones, but  they are in general infinite-dimensional.

Similarly, the truncation produces valid $qq$-characters, but there is a price to pay: the truncation changes the degree of the $qq$-character and, in general, one gets a $qq$-character of a non-zero degree.

\subsection{Combinatorial fusion}
Multiplication of  $qq$-characters often produces the same truncation phenomenon. A non-generic product of $qq$-characters $\chi_1\chi_2$ is often not a $qq$-character, it may be not even a well-defined element of $\tilde {\mc Y}$. However, if there exist mutually generic monomials $m_1\in\chi_1$, $m_2\in\chi_2$,  then all monomials of the $qq$-character generated by the product $m_1m_2$ are in $\chi_1\chi_2$. 

For example, let $r=1$, $I=\{1\}$. Then $B^{(1)}_1=Y_{1,q}+Y_{1,q^{-1}}$ is a block of color one and of length 2. Then
$B^{(1)}_{1,q^{-1}} B^{(1)}_{1,q}$ is not a generic product but it contains the block of length three $B^{(2)}_1=Y_{1,q^2}Y_{1,1}+Y_{1,q^{2}}Y_{1,q^{-2}}+ Y_{1,1}Y_{1,q^{-2}}$: we have
$$
B^{(1)}_{1,q^{-1}}B^{(1)}_{1,q}=B^{(2)}_1+Y_{1,1}^2.
$$
Note that there is no complementary $qq$-character at all as $Y_{1,1}^2$ is not generic.

We call such products truncated.

We describe a combinatorial procedure which allows us to do the truncation of products of $qq$-characters without invoking the algorithm. This procedure originates in the study of the fusion of currents and, therefore, we call it combinatorial fusion.

For each $i\in I$, and a monomial $\sigma\in R$, define group homomorphisms $l_{i,\sigma}$, $r_{i,\sigma}$ sending monomials in $\mc Y$ to $R$ considered as an additive group by the rule:
\begin{align*}
l_{i,\sigma}(Y_{j,\tau}) \mapsto \delta_{ij}(q-q^{-1})\tau\sigma^{-1},  \\
r_{i,\sigma}(Y_{j,\tau}) \mapsto -\delta_{ij} (q-q^{-1})\tau^{-1}\sigma. 
\end{align*}

We call the homomorphisms $l_{i,\sigma}$, $r_{i,\sigma}$ the  combinatorial left and right contractions with the affine root $A_{i,\sigma}$. They are to be compared with \eqref{AY},  see Section \ref{sec:freefield} below.

We note, cf. \eqref{AA},
$$
l_{i,\tau_1} (A_{j,\tau_2})=r_{j,\tau_2}(A_{i,\tau_1})=(q-q^{-1})(\sigma_{ij}-\sigma_{ij}^{-1})\tau_2\tau_1^{-1}.
$$

Let $m,n\in \mc Y$ be two monomials. Assume that they have the form 
\begin{align}\label{mA}
m=m_0\prod_{j}A_{i_j,\sigma_j}^{a_j}, \qquad n=n_0\prod_{j}A_{i_j,\sigma_j}^{b_j},
\end{align}
where the product is over some finite set of indices and $a_j,b_j\in\Z$.  Define the relative combinatorial contraction:
$$
\frac{[m,n]}{[m_0,n_0]}=\sum_{j}( a_j l_{i_j,\sigma_j}(n)+b_jr_{i_j,\sigma_j}(m_0))=\sum_{j}( a_j l_{i_j,\sigma_j}(n_0)+b_jr_{i_j,\sigma_j}(m)).
$$
In particular, we have the following simple properties:
\begin{align}\label{prop of contr}
\frac{[m,n]}{[m,n]}=0, \qquad \frac{[m,n]}{[m',n']}+\frac{[m',n']}{[m,n]}=0, \qquad
\frac{[m,n]}{[m',n']}+\frac{[m',n']}{[m'',n'']}=\frac{[m,n]}{[m'',n'']}.
\end{align}
Then we define the relative pairing 
$$
(m,n)_{m_0,n_0}=\left(\frac{[m,n]}{[m_0,n_0]}\right)_0\in\Z,
$$
where, for a Laurent polynomial $p\in R$, we denote by $p_0$ its constant term. 

Let us now have two simple $qq$-characters $\chi_1,\chi_2\in\mc Y$. Choose $m_0\in\chi_1$, $n_0\in\chi_2$. Then any $m\in\chi_1$, $n\in\chi_2$ have form \eqref{mA}. 

We define the combinatorial fusion of $\chi_1$ and $\chi_2$ by multiplying the two and keeping only the terms with maximal pairing:
$$
\chi_1 * \chi_2= \sum_{\substack{m\in\chi_1, n\in\chi_2,\\ (m,n)_{m_0,n_0}=M}} mn, \qquad M=\max_{m\in\chi_1, \ n\in\chi_2}{\{(m,n)_{m_0,n_0}\}}.
$$

Clearly, the definition of $\chi_1 *\chi_2$ does not depend on the choice of $m_0,n_0$, see \eqref{prop of contr}. Moreover it is  commutative as we clearly have
$$
(m,n)_{m_0,n_0}=(n,m)_{n_0,m_0}.
$$
We use the combinatorial fusion to construct non-trivial $qq$-characters.

For example, consider the case $r=1$ when there is only one color. We have
\begin{align*}
& (Y_{1,1}+Y_{1,q^2})*(Y_{1,q^2}+Y_{1,q^4})= Y_{1,1}Y_{1,q^2}+Y_{1,1}Y_{1,q^4}+Y_{1,q^2}Y_{1,q^4}.
\end{align*}
It is easy to see that all blocks of length $k+1$ can be obtained as multiple combinatorial fusion of blocks of length $2$.

However, the combinatorial fusion should be used with care. 
For example, in the case $r=1$, we have $(Y_{1,1}+Y_{1,q^2})* (Y_{1,1}+Y_{1,q^2})=2Y_{1,1}Y_{1,q^2}$. 
In fact, the correct answer here should be  $(Y_{1,1}^2+(Y_{1,1}Y_{1,q^2})'+Y_{1,q^2}^2)$ as the main terms in two copies of $Y_{1,1}Y_{1,q^2}$ actually cancel and one has to consider ``the derivative" and bring back the other terms. Such an example is not tame though.

\begin{conj}
Let $\chi_1,\chi_2$ be simple $qq$-characters. Suppose the fusion product $\chi_1*\chi_2$ is generic and has all non-zero coefficients one. Then $\chi_1*\chi_2$ is a $qq$-character.
\end{conj}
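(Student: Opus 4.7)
My plan is to prove the conjecture color-by-color, using the block decompositions of $\chi_1$ and $\chi_2$ guaranteed by their being $qq$-characters. Fix a color $i \in I$. Write $\chi_j = \sum_\alpha P^{(j)}_\alpha$ for $j = 1, 2$, where each $P^{(j)}_\alpha$ is a product of mutually generic shifted elementary blocks of color $i$. The ordinary product is
\[
\chi_1 \chi_2 = \sum_{\alpha,\alpha'} P^{(1)}_\alpha P^{(2)}_{\alpha'},
\]
and $\chi_1 * \chi_2$ is obtained from this by retaining only terms of maximal pairing $(m,n)_{m_0,n_0}$. The goal is to show that the contribution to $\chi_1 * \chi_2$ coming from each summand $P^{(1)}_\alpha P^{(2)}_{\alpha'}$ is itself a sum of products of mutually generic shifted elementary blocks of color $i$.

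First I would analyze pairs of blocks individually. Let $B^{(k_1)}_{i,\sigma_1}$ (from the first factor) and $B^{(k_2)}_{i,\sigma_2}$ (from the second) be two blocks of color $i$. I expect three cases: (a) their color-$i$ position sets $\{q^{-k_j+2s}\sigma_j\}$ are disjoint, in which case the product is already generic and both blocks survive unchanged; (b) the two position sets together form a single contiguous chain, possibly with one-position overlap, in which case a direct computation using the formulas for $l_{i,\cdot}$ and $r_{i,\cdot}$ shows that the fusion merges them into a single longer block of color $i$ after dropping the non-generic ``diagonal'' cross-terms of the type $Y_{i,\sigma}^2$; (c) the position sets overlap in a more intricate way, so that every nontrivial monomial pair has a repeated $Y_{i,\sigma}$ factor. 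Case (b) is precisely the situation of the example preceding the conjecture, and its general form reduces to the length-two case, since every block arises as an iterated combinatorial fusion of length-two blocks.

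Next I would assemble the local fusions. For fixed $\alpha, \alpha'$, apply the pairwise analysis to each pair of blocks, one drawn from $P^{(1)}_\alpha$ and one from $P^{(2)}_{\alpha'}$. Any surviving term in $\chi_1 * \chi_2$ coming from this summand must arise only from pairings in cases (a) or (b); a case (c) pairing would force a repeated $Y_{i,\sigma}$ factor to appear in $\chi_1 * \chi_2$, contradicting the genericity hypothesis. Hence the surviving contribution is exactly the product of the paired-off blocks, some merged and some unchanged, and this is by construction a product of mutually generic blocks of color $i$. Since $i$ was arbitrary, $\chi_1 * \chi_2$ is a $qq$-character.

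The main obstacle I foresee is that the pairing $(m,n)_{m_0,n_0}$ is a global quantity, summing contractions of $A_{j,\tau}$-factors across all colors $j$, whereas the block decomposition is color-$i$-local. A priori, a pair $(m,n)$ could fail to achieve the global maximum because of contractions in some color $j \neq i$ even though its color-$i$ contractions are maximal, so that removing it from $\chi_1 * \chi_2$ would break the color-$i$ block structure. To bypass this I would establish a factorization lemma: the global pairing decomposes as a sum over colors of independent per-color pairings, so the global maximum is attained if and only if every per-color summand is individually maximal. The required independence follows because moving $m \mapsto m A_{i,\sigma}^{-1}$ within a color-$i$ block alters only those contractions paired against color-$i$ factors of the opposite monomial, via the formula for $l_{i,\tau_1}(A_{j,\tau_2})$ shown in the text. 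Combined with the genericity hypothesis, this factorization should complete the argument.
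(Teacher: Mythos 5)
The statement you are proving is stated in the paper only as a conjecture; the authors give no proof, so your argument has to stand entirely on its own, and as written it does not. The decisive gap is the ``factorization lemma'' on which everything rests. You justify it by saying that moving $m\mapsto mA_{i,\sigma}^{-1}$ ``alters only those contractions paired against color-$i$ factors of the opposite monomial, via the formula for $l_{i,\tau_1}(A_{j,\tau_2})$,'' but that formula says the opposite: $l_{i,\tau_1}(A_{j,\tau_2})=(q-q^{-1})(\sigma_{ij}-\sigma_{ij}^{-1})\tau_2\tau_1^{-1}$ is nonzero for every $j\neq i$ with $c_{ij}\neq 0$, and its constant term is nonzero exactly when the shifts resonate. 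Writing $m=m_0\prod_j A_{i_j,\sigma_j}^{a_j}$, $n=n_0\prod_j A_{i_j,\sigma_j}^{b_j}$, the pairing $(m,n)_{m_0,n_0}$ contains the coupling terms $\sum_{j,j'}a_jb_{j'}\bigl[l_{i_j,\sigma_j}(A_{i_{j'},\sigma_{j'}})\bigr]_0$ with $i_j\neq i_{j'}$, so it is not a sum of functions depending on disjoint ``per-color coordinates,'' and maximizing the total does not force each per-color piece to be maximal. Equivalently: although $l_{i,\sigma}(n)$ sees only the color-$i$ variables of $n$, those variables are changed by color-$j$ moves inside $n$, so the per-color maximization problems are coupled. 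This coupling is not a technicality; it is exactly what happens in the paper's motivating examples such as $\chi^{66,1}=\chi^{18}*\tau_{p_1}\chi^{18}$, where the shift creates cross-color resonances and the selection of the $66$ surviving monomials out of $324$ is genuinely global. Without a valid replacement for this lemma, your reduction of the maximal-pairing condition to a color-$i$-local block analysis collapses.

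There are secondary gaps as well. In your case (b) you assert that two blocks with contiguous positions fuse into one longer block ``after dropping the non-generic diagonal cross-terms,'' but the fusion operation discards terms only by the maximal-pairing criterion, not by genericity (the paper's example $(Y_{1,1}+Y_{1,q^2})*(Y_{1,1}+Y_{1,q^2})=2Y_{1,1}Y_{1,q^2}$ shows that non-generic terms can be exactly what survives); what you need, and do not prove, is that under the hypotheses the maximal-pairing set within such a rectangle of monomials is precisely the longer block. The reduction ``every block is an iterated fusion of length-two blocks'' is also circular here, since iterating $*$ presupposes knowing that intermediate fusions are $qq$-characters and that $*$ behaves associatively in the relevant sense, neither of which is established. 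Finally, even granting a per-color analysis, you would still have to show that the surviving monomials reassemble into products of \emph{mutually generic} shifted blocks (including the non-colliding negative powers $\bar m$), which your case (a)/(b)/(c) trichotomy does not address. As it stands the proposal is a plausible strategy outline, not a proof, and the central independence claim is false in the generality needed.
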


If $\chi_1,\chi_2$ are mutually generic, then 
we have $(m,n)_{m_0,n_0}=0$, for any $m\in\chi_1$, $n\in\chi_2$. Indeed, if $A_{i,\tau}$ is present in the expression \eqref{mA} for $m$, then some monomial of $\chi_1$ contains $Y_{i,\tau q}$ and some monomial in $\chi_1$ contains $Y_{i,\tau q^{-1}}$. It follows that $\chi_2$ does not contain these monomials and, therefore, $A_{i,\tau}$  makes no contribution to pairing $(m,n)_{m_0,n_0}$.

In particular, if $\chi_1,\chi_2$ are mutually generic, then the fusion product coincides with the usual product:
$\chi_1*\chi_2=\chi_2*\chi_1=\chi_1\chi_2$.

\section{Examples of  $qq$-characters.}\label{sec:examples}
For a randomly chosen Cartan matrix, there are no finite degree zero $qq$-characters. It seems that every non-trivial example is interesting. 

We often use the notation
\begin{align*}
Y_{1,\sigma}={\bf 1}_\sigma, \quad Y_{2,\sigma}={\bf 2}_\sigma, \quad Y_{1,\sigma}^{-1}={\bf 1}^\sigma, \quad 
Y_{1,\sigma_1}Y_{1,\sigma_2}Y_{1,\sigma_3}^{-1}={\bf 1}^{\sigma_3}_{\sigma_1,\sigma_2}, \qquad Y_{i+1,\sigma_1}Y_{i+1,\sigma_2}^{-1}={(\bs{i + 1})}^{\sigma_2}_{\sigma_1},
\end{align*}
and so on.

\subsection{The case of $\mathfrak{gl}_{2,1}$}\label{sec 21}
We work with two independent variables $q$ and $q_1$. We set $q_2=q^{-1}q_1^{-1}$, and $p=q^2 q_1^2=q_2^{-2}$.
Let $I=\{1,2\}$ and 
$$
C=\begin{pmatrix}
q-q^{-1} & q_1-q_1^{-1} \\
q_1-q_1^{-1} & q-q^{-1}
\end{pmatrix}.
$$
We have
$$
A_1=\bs 1_q^{q^{-1}} \bs 2_{q_1}^{q_1^{-1}}, \qquad  A_2=\bs 1_{q_1}^{q_1^{-1}} \bs 2_{q}^{q^{-1}}.
$$
We call $qq$-characters corresponding to this Cartan matrix $qq$-characters of  $\mathfrak{gl}_{2,1}$ type.

Note that we have a natural symmetry exchanging colors: $\bs 1_\sigma \leftrightarrow \bs 2_\sigma$. Given a family  of $qq$-characters one can produce more $qq$-characters by shifting, taking generic products, and exchanging colors.

The case of $\mathfrak{gl}_{2,1}$ is fundamental for us, because for any deformed Cartan matrix of fermionic type, any restriction $\rho_J$ with $|J|=2$ gives either the trivial case of two non-interacting fermions or the case of $\gl_{2,1}$ with the appropriately chosen $q_1$. The case of $\mathfrak{gl}_{2,1}$  is also the simplest one and it is convenient to illustrate our methods with.

\medskip

We start with the dominant degree $(1,0)$ monomial $V_0=\bs 1_q$. Then we expand it and get monomial $V_1=A^{-1}_{1,1}V_0=\bs 1_{q^{-1}}{\bs 2}^{q_1}_{q_1^{-1}}$. Note that $\bs 1_{q^{-1}}$ is marked but 
$\bs 2_{q_1^{-1}}$ is not.
It creates the need to expand $V_1$ in color $2$ and we get a monomial of depth two,  $V_2= A^{-1}_{2,q_2}V_1= \bs 1_{qq_2^2}\bs 2_{q_1q_2^2}^{q_1}$.  We again get an unmarked $\bs 1_{qq_2^2}$ which we expand and get $V_3=\bs 1_{q^{-1}q_2^2}\bs 2_{q_1^{-1}q_2^2}^{q_1}$ which in its turn needs to be expanded in color 2. And so on. As the result we obtain an infinite linear $qq$-character of degree $(1,0)$ which we call $\chi_1^+$. 

We have
\begin{align}\label{chi+}
\chi_1^{+}=\sum_{i=0}^\infty (\bs 1_{qp^{-i}}\bs 2_{q_1p^{-i}}^{q_1} +\bs 1_{q^{-1}p^{-i}} \bs 2_{q_1^{-1}p^{-i}}^{q_1}).
\end{align}

We now start with the anti-dominant monomial $\bs 1_{q^{-1}}$. Expanding, we obtain an infinite 
linear $qq$-character of degree $(1,0)$ which we call $\chi_1^-$. It can be obtained by changing in  \eqref{chi+}, $q,q_1,p,q_2$ to $q^{-1},q_1^{-1},p^{-1},q_2^{-1}$. 

Similarly, starting from dominant monomial $\bs 2_q$ and anti-dominant monomial $\bs 2_{q^{-1}}$ we obtain the infinite linear $qq$-characters of degree $(0,1)$ which we call $\chi_2^+$ and $\chi_2^-$. The characters $\chi_2^\pm$ are obtained from $\chi_1^\pm$ by exchanging $\bs 1 \leftrightarrow \bs 2$.

We call all these $qq$-characters (and their shifts)  half-lines.

Next, we consider the monomial $\bs 1_q \bs 2_{q_1}$. We look at it as $1$-dominant and $2$-anti-dominant. Expanding, we obtain an infinite 
linear $qq$-character of degree $(1,1)$ which we call $\chi^{+,-}$. We have  
\begin{align}\label{chi+-}
\chi^{+,-}=\sum_{i\in\Z} (\bs 1_{qp^{-i}}\bs 2_{q_1p^{-i}} +\bs 1_{q^{-1}p^{-i}} \bs 2_{q_1^{-1}p^{-i}}).
\end{align}
Note that $\chi^{+,-}$ is periodic: $\tau_{p} \chi^{+,-}=\chi^{+,-}$. Also note that the change
$\bs 1 \leftrightarrow \bs 2$ in $\chi^{+,-}$ gives $\tau_{q_2} \chi^{+,-}$.
We call the $qq$-character $\chi^{+,-}$ and (the shifts of $\chi^{+,-}$ ) the line.

\medskip

Now we are ready to construct slim characters.

First, we have slim  linear prime $qq$-characters obtained by generic products $\chi_1^\pm\bs 1^{\sigma}$, $\chi_2^\pm\bs 2^{\sigma}$ and 
$\chi^{+,-}\bs 1^{\sigma_1}\bs 2^{\sigma_2}$. 
We call them degree zero half-lines and lines respectively. 

Next we use the truncation, see Section \ref{sec truncation}. Consider the $qq$-character $\chi_1^+ \bs 1^\sigma$. For general $\sigma$, the product is generic and therefore it is a slim infinite $qq$-character. However, for $\sigma=qp^{-n}$, where $n\in\Z_{\geq 0}$, we have a cancellation and a truncation. We obtain a finite linear $qq$-character with $2n+1$ terms which we denote by $\chi_1^{2n+1}=\Trn(\chi_1^+ \bs 1^{qp^{-n}})$ and call a degree zero segment.

\begin{align}\label{chi n}
\chi_1^{2n+1}=\sum_{i=0}^{n-1} (\bs 1_{qp^{-i}}^{qp^{-n}}\bs 2_{q_1p^{-i}}^{q_1} +\bs 1_{q^{-1}p^{-i}}^{qp^{-n}} \bs 2_{q_1^{-1}p^{-i}}^{q_1})+\bs 2_{q_1p^{-n}}^{q_1}.
\end{align}
We also have a linear $qq$-character $\chi_2^{2n+1}$ obtained either by truncation of $\chi_2^{+}$ or by replacing $\bs 1 \leftrightarrow \bs 2$ in $\chi_1^{2n+1}$.

To obtain a $qq$-character with an even number of terms we need to truncate $\chi_1^+\bs 2^{q_1^{-1}p^{-n}}$. This linear $qq$-character  has $2n+2$ terms and degree $(1,-1)$ which we call a segment. Similarly, we have a truncation of $\chi_2^+\bs 1^{q_1^{-1}p^{-m}}$ of degree $(-1,1)$. Making a shift by some $\kappa$ and multiplying we obtain a slim prime $qq$-character with $2n\times 2m$ terms. We denote this character by $\chi^{2n,2m}$ and call a prime rectangle. The dominant monomial of  $\chi^{2n,2m}$ is $m_+^{2n,2m}=\bs 1_{q}^{\kappa q_1^{-1}p^{1-m}}\bs 2_{\kappa q}^{ q_1^{-1}p^{1-n}}$.

The prime rectangles have the form $\chi_{12}^{2n,2m}=\sum_{a=0}^{2n-1}\sum_{b=0}^{2m-1} V_{a,b}$, where
\begin{align*}
&V_{2k,2\ell}=\bs 1_{qp^{-k},\, \kappa q_1p^{-\ell} }^{\sigma_1,\,\kappa q_1} \bs 2_{\kappa qp^{-\ell},\, q_1p^{-k} }^{\sigma_2,\,q_1}, \\
&V_{2k+1,2\ell+1}=\bs 1_{q^{-1}p^{-k},\, \kappa q_1^{-1}p^{-\ell} }^{\sigma_1,\,\kappa q_1}  \bs 2_{\kappa q^{-1}p^{-\ell},\, q_1^{-1}p^{-k} }^{\sigma_2,\,q_1}, \\
&V_{2k+1,2\ell}=\bs 1_{q^{-1}p^{-k},\, \kappa q_1p^{-\ell} }^{\sigma_1,\,\kappa q_1} \bs  2_{\kappa qp^{-\ell},\, q_1^{-1}qp^{-k} }^{\sigma_2,\,q_1}, \\
&V_{2k,2\ell+1}=\bs 1_{qp^{-k},\, \kappa q_1^{-1}p^{-\ell} }^{\sigma_1,\,\kappa q_1} \bs 2_{\kappa q^{-1}p^{-\ell},\, q_1p^{-k} }^{\sigma_2,\, q_1}.
\end{align*}
where $\sigma_1=\kappa q^{-1} p^{1-m}$, $\sigma_2= q^{-1}p^{1-n}$, and $\kappa$ is sufficiently general to avoid any cancellations.

The graph of a prime rectangle with highest monomial $m_+^{6,4}=\bs 1_q^{\kappa q^{-1}p^{-1}}\bs 2_{q\kappa}^{q^{-1}p^{-2}}$ is shown in Figure \ref{rectangle pic}. Note that this $qq$-character is slim but not linear.

\begin{figure} [H] 
\begin{center}
\begin{tikzpicture}[scale=0.53]
\node at  (-12,3.5) {\small $V_{0,0}$};
\node at  (-8,3.5)  {\small $V_{0,1}$};
\node at  (-4,3.5)  {\small $V_{0,2}$};
\node at  (0,3.5)  {\small $V_{0,3}$};
\node at  (4,3.5)  {\small $V_{0,4}$};
\node at  (8,3.5)  {\small $V_{0,5}$};

\node at  (-12,0) {\small $V_{1,0}$};
\node at  (-8,0)  {\small $V_{1,1}$};
\node at  (-4,0)  {\small $V_{1,2}$};
\node at  (0,0)  {\small $V_{1,3}$};
\node at  (4,0)  {\small $V_{1,4}$};
\node at  (8,0)  {\small $V_{1,5}$};

\node at  (-12,-3.5) {\small $V_{2,0}$};
\node at  (-8,-3.5)  {\small $V_{2,1}$};
\node at  (-4,-3.5)  {\small $V_{2,2}$};
\node at  (0,-3.5)  {\small $V_{2,3}$};
\node at  (4,-3.5)  {\small $V_{2,4}$};
\node at  (8,-3.5)  {\small $V_{2,5}$};

\node at  (-12,-7) {\small $V_{3,0}$};
\node at  (-8,-7)  {\small $V_{3,1}$};
\node at  (-4,-7)  {\small $V_{3,2}$};
\node at  (0,-7)  {\small $V_{3,3}$};
\node at  (4,-7)  {\small $V_{3,4}$};
\node at  (8,-7)  {\small $V_{3,5}$};

\draw[blue, ->] (-11,3.5)--node[below]{{\small $A_{1,1}^{-1}$}} (-8.7,3.5);
\draw[red, ->] (-7,3.5)-- node[below]{{\small $A_{2,q_2}^{-1}$}}(-4.7,3.5);
\draw[blue,->] (-3,3.5)-- node[below]{{\small $A_{1,q_2^2}^{-1}$}}(-0.7,3.5);
\draw[red, ->]  (1,3.5)--node[below]{{\small $A_{2,q_2^3}^{-1}$}}(3.3,3.5);
\draw[blue, ->] (5,3.5)--node[below]{{\small $A_{1,q_2^4}^{-1}$}}(7.3,3.5);

\draw[blue, ->] (-11,0)--node[below]{{\small $A_{1,1}^{-1}$}} (-8.7,0);
\draw[red, ->] (-7,0)-- node[below]{{\small $A_{2,q_2}^{-1}$}}(-4.7,0);
\draw[blue,->] (-3,0)-- node[below]{{\small $A_{1,q_2^2}^{-1}$}}(-0.7,0);
\draw[red, ->]  (1,0)--node[below]{{\small $A_{2,q_2^3}^{-1}$}}(3.3,0);
\draw[blue, ->] (5,0)--node[below]{{\small $A_{1,q_2^4}^{-1}$}}(7.3,0);

\draw[blue, ->] (-11,-3.5)--node[below]{{\small $A_{1,1}^{-1}$}} (-8.7,-3.5);
\draw[red, ->] (-7,-3.5)-- node[below]{{\small $A_{2,q_2}^{-1}$}}(-4.7,-3.5);
\draw[blue,->] (-3,-3.5)-- node[below]{{\small $A_{1,q_2^2}^{-1}$}}(-0.7,-3.5);
\draw[red, ->]  (1,-3.5)--node[below]{{\small $A_{2,q_2^3}^{-1}$}}(3.3,-3.5);
\draw[blue, ->] (5,-3.5)--node[below]{{\small $A_{1,q_2^4}^{-1}$}}(7.3,-3.5);

\draw[blue, ->] (-11,-7)--node[below]{{\small $A_{1,\kappa }^{-1}$}} (-8.7,-7);
\draw[red, ->] (-7,-7)-- node[below]{{\small $A_{2,\kappa q_2}^{-1}$}}(-4.7,-7);
\draw[blue,->] (-3,-7)-- node[below]{{\small $A_{1,\kappa q_2^2}^{-1}$}}(-0.7,-7);
\draw[red, ->]  (1,-7)--node[below]{{\small $A_{2,q_2^3}^{-1}$}}(3.3,-7);
\draw[blue, ->] (5,-7)--node[below]{{\small $A_{1,\kappa q_2^4}^{-1}$}}(7.3,-7);

\draw[red, ->] (-12,3.0)--node[right] {{\small $A_{2,\kappa }^{-1}$}} (-12,0.7);
\draw[blue, ->] (-12,-0.5)--node[right ]{{\small $A_{1,\kappa q_2}^{-1}$}} (-12,-2.8);
\draw[red, ->] (-12,-4.0)--node[right ]{{\small $A_{2,\kappa q_2^2}^{-1}$}} (-12,-6.3);

\draw[red, ->] (-8,3.0)--node[right ]{{\small $A_{2,\kappa }^{-1}$}} (-8,0.7);
\draw[blue, ->] (-8,-0.5)--node[right ]{{\small $A_{1,\kappa q_2}^{-1}$}} (-8,-2.8);
\draw[red, ->] (-8,-4.0)--node[right ]{{\small $A_{2,\kappa q_2^2}^{-1}$}} (-8,-6.3);

\draw[red, ->] (4,3.0)--node[right ]{{\small $A_{2,\kappa }^{-1}$}} (4,0.7);
\draw[blue, ->] (4,-0.5)--node[right ]{{\small $A_{1,\kappa q_2}^{-1}$}} (4,-2.8);
\draw[red, ->] (4,-4.0)--node[right ]{{\small $A_{2,\kappa q_2^2}^{-1}$}} (4,-6.3);

\draw[red, ->] (8,3.0)--node[right ]{{\small $A_{2,\kappa }^{-1}$}} (8,0.7);
\draw[blue, ->] (8,-0.5)--node[right ]{{\small $A_{1,\kappa q_2}^{-1}$}} (8,-2.8);
\draw[red, ->] (8,-4.0)--node[right ]{{\small $A_{2,\kappa q_2^2}^{-1}$}} (8,-6.3);

\draw[red, ->] (-4,3.0)--node[right ]{{\small $A_{2,\kappa }^{-1}$}} (-4,0.7);
\draw[blue, ->] (-4,-0.5)--node[right ]{{\small $A_{1,\kappa q_2}^{-1}$}} (-4,-2.8);
\draw[red, ->] (-4,-4.0)--node[right ]{{\small $A_{2,\kappa q_2^2}^{-1}$}} (-4,-6.3);

\draw[red, ->] (0,3.0)--node[right ]{{\small $A_{2,\kappa }^{-1}$}} (0,0.7);
\draw[blue, ->] (0,-0.5)--node[right ]{{\small $A_{1,\kappa q_2}^{-1}$}} (0,-2.8);
\draw[red, ->] (0,-4.0)--node[right ]{{\small $A_{2,\kappa q_2^2}^{-1}$}} (0,-6.3);

\end{tikzpicture}
\end{center}
\caption{The $4\times 6$ rectangle $\gl_{2,1}$ $qq$-character.}\label{rectangle pic}
\end{figure}
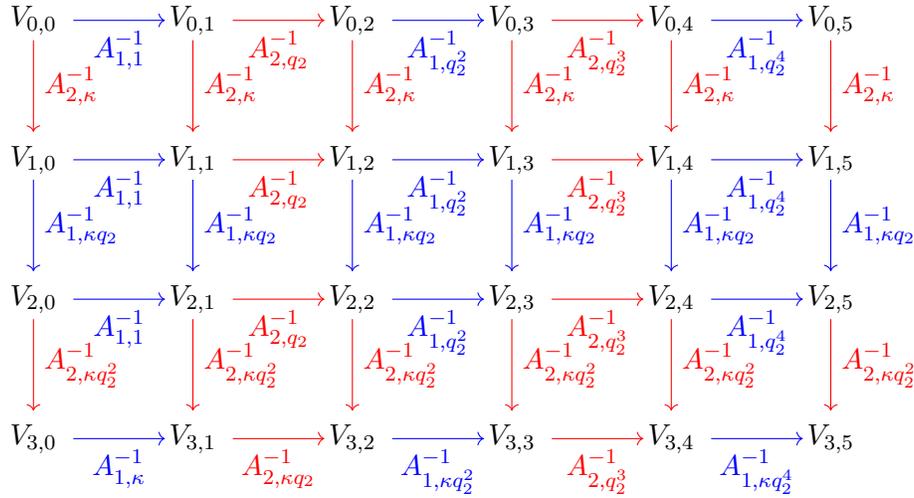

Finally, we can truncate products (with general enough $\kappa$):
\begin{align*}
   & \chi_{12}^{\pm,2n}=\Trn\left(\chi_1^\pm  \tau_\kappa (\chi_2^+) \, \bs 1^{\kappa q^{-1}p^{1-n}}\right), \qquad  &\chi_{21}^{\pm,2n}=\Trn\left(\chi_2^\pm \tau_\kappa (\chi_1^+) \,\bs 2^{\kappa q^{-1}p^{1-n}}\right),\qquad \\
 &\chi_2^{+,-,2n}=\Trn\left(\chi^{+,-} \tau_\kappa (\chi_2^+) \,\bs 1^{\kappa q^{-1}p^{n-1}}\right), \qquad
 &\chi_2^{+,-,2n}=\Trn\left(\chi^{+,-} \tau_\kappa (\chi_1^+) \bs 2^{\kappa q^{-1}p^{n-1}}\right).\ \,
\end{align*}
The results are slim prime infinite $qq$-characters
which we call prime strips. Multiplying by factors $\bs 2^{\sigma}$,$\bs 1^{\sigma}$ with general enough $\sigma$  to make the factors mutually generic,
we obtain degree zero $qq$-characters.
We call these slim prime $qq$-characters degree zero prime strips.

It turns out that we have constructed all slim prime $\gl_{2,1}$ $qq$-characters up to a shift.

\begin{prop}
The prime slim  $qq$-characters of $\gl_{2,1}$ type are either degree zero half-lines, lines, segments, prime strips, or prime rectangles. 

The linear  $qq$-characters of $\gl_{2,1}$ are either degree zero half-lines, lines,  segments, or squares.
\end{prop}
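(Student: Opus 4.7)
The plan is to classify simple slim $qq$-characters by analyzing the combinatorial structure of their graphs. Let $\chi$ be a simple slim $qq$-character of $\gl_{2,1}$ type. Since $\chi$ is simple, its graph is connected, and slimness means that each elementary block in any color-$i$ decomposition has length at most $2$, so at each vertex $m$ the color-$i$ edges are in bijection with the length-$2$ blocks of color $i$ containing $m$. By Lemma \ref{unique}, the decomposition of $\chi$ into blocks of a given color is essentially unique in the finite case; in the infinite case the uniqueness is recovered once the infinite chains of each color have been identified.

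I would then apply the algorithm of Section \ref{sec algorithm} starting from a dominant monomial $m_+$, and split into cases by the number of positive factors in $m_+$. When $m_+$ has a single positive factor, say of color $1$, the first expansion produces a monomial with exactly one new positive factor of color $2$, and iterating yields a path alternating colors; this path either terminates after finitely many steps (producing a segment) or continues indefinitely (producing a degree-zero half-line). When $m_+$ has two positive factors, slimness forces each to lie in its own length-$2$ block, so the expansion branches into a $2\times 1$ strip of monomials, and subsequent expansions in color $2$ build a 2D grid. Algebraic independence of the affine roots $A_{i,\sigma}$ rules out loops, so the grid either terminates in both directions to produce a prime rectangle or extends indefinitely in at least one direction to produce a prime strip. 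When $\chi$ has neither dominant nor anti-dominant monomial, it must be periodic under a shift, and applying the same dichotomy at a periodic anchor shows that $\chi$ is a line (in the 1D case) or is already covered by the strip case above.

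For the linear statement, the further requirement that each monomial has at most one positive and one negative factor of each color immediately precludes the 2D branching described above, except in the degenerate situation where the expansion closes up into a $2\times 2$ cycle after one step in each direction; this accounts for the squares and requires a specific resonance between $q$ and $q_1$. All other linear configurations collapse to 1D paths and hence to half-lines, lines, or segments.

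The main obstacle is treating the infinite cases with sufficient care: lines, half-lines, and strips lack a true dominant monomial, so the algorithm must be seeded differently, and the uniqueness of the block decomposition must be re-established, in particular ruling out exotic infinite chains not already listed. A secondary subtlety is the square case, where one must exhibit the resonance explicitly and verify that the resulting $2\times 2$ configuration is a bona fide prime linear $qq$-character rather than a product of simpler ones or a non-tame object.
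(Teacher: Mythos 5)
Your overall route differs from the paper's: you seed the algorithm at a dominant monomial and case-split on its positive factors, whereas the paper picks an \emph{arbitrary} monomial $m=m_1/m_2$ with the smallest possible negative part, pairs the dominant/anti-dominant positive variables in $m_1$, and exhibits $\chi$ as a truncation of a product of half-lines and lines built from those variables. This difference is not cosmetic, because your treatment of the characters without a dominant monomial fails. Your claim that a prime slim character with neither a dominant nor an anti-dominant monomial ``must be periodic under a shift'' is false: the strips $\chi_2^{+,-,2n}=\Trn\bigl(\chi^{+,-}\tau_\kappa(\chi_2^+)\,\bs 1^{\kappa q^{-1}p^{n-1}}\bigr)$ of Section \ref{sec 21} are products of a line with a segment; every monomial has both an incoming and an outgoing edge in the line direction, so there is no dominant or anti-dominant monomial, yet a shift by $p$ moves the segment factor and the character is not shift-periodic. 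These strips are in the statement you are proving, and your case analysis never reaches them. Two further essential steps are missing. First, you assert that the two-directional expansion ``either terminates in both directions to produce a prime rectangle or extends indefinitely,'' but the heart of the paper's proof is precisely the claim that after the first truncation along each direction \emph{no further truncations} occur (otherwise one could imagine staircase- or L-shaped truncation patterns); this is proved using slimness together with the minimality of $m_2$, and nothing in your proposal substitutes for it. Second, the primality analysis is absent: you neither explain why a dominant monomial with three or more unpaired/paired positive variables cannot give a prime character, nor why exactly the listed two-factor configurations (e.g.\ a degree $(1,-1)$ segment together with a degree $(-1,1)$ segment, giving a prime rectangle) are prime while the others factor; the paper devotes the last third of its proof to this.

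On the linear statement, your list is right but your justification is not. The squares are the $2\times 2$ prime rectangles and exist for \emph{generic} $q,q_1$; no resonance between $q$ and $q_1$ is required. What is special is the relative position of the negative variables in the dominant monomial (e.g.\ $m_+=\bs 1_{q}^{\kappa q_1^{-1}}\bs 2_{\kappa q}^{q_1^{-1}}$), which forces the newly created positive powers to cancel immediately, as in the truncation mechanism of Section \ref{sec truncation}. In the paper the ``resonance'' terminology is reserved for relations among the parameters $q_0,q_1,q_2$ themselves, which is a different phenomenon. You would also need to check (rather than assert) that strips and rectangles larger than $2\times 2$ are not linear --- e.g.\ in Figure \ref{rectangle pic} the vertex $V_{0,1}$ carries two edges of color $2$ --- so that the linear classification follows from the slim one.
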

\begin{proof}
Let $\chi$ be a slim $qq$-character and $m\in\chi$. Let $m=m_1/m_2$ where both $m_1$ and $m_2$ contain no inverses. Without loss of generality, we assume that $m$ is such that $m_2$ contains the smallest possible number of variables. Classify the variables appearing in $m_1$ (i.e. positive powers in $m$) as follows. We have $\bs 1_{\sigma}$ (resp. $\bs 2_{\sigma}$) which are  $1$-dominant (resp. $2$-dominant) in their blocks  and the ones which are $1$-anti-dominant (resp. $2$-anti-dominant). We place dominant $\bs 1_{q\sigma}$ in pairs with anti-dominant $\bs 2_{q_1 \sigma}$ when such pairs exist. Similarly we place dominant $\bs 2_{q\sigma}$ in pairs with anti-dominant $\bs 1_{q_1 \sigma}$ when possible. As the result we have several unpaired dominant variables, several unpaired anti-dominant variables and several pairs. 

Generate from each unpaired variable $Y_{i,\sigma}$ a half-line $qq$-character of degree $(1,0)$ or $(0,1)$ and from each pair a line $qq$-character of degree $(1,1)$. Consider the product $\hat \chi$ of all these $qq$-characters. This product does not have to be generic, so it does  not have to be a $qq$-character. However, all monomials in $\chi$ are obtained from this product by multiplying by $m_2^{-1}$ and truncating the result. Note that every half line creates an inverse of a variable which we call a new inverse.

It is convenient to think that monomials in $\hat \chi$ are labeled by the set of integer points in a multi-dimensional simplex, formed 
by the Cartesian product of all participating lines and half lines.  

Now we consider the first possible truncation on each of the edges of this simplex. Thus we take $m$ and expand it in the direction of one of the half lines until we arrive at the first instance when we generate a monomial $m'$ with unmarked $\bs 1_\sigma$ or $\bs 2_\sigma$ which is cancelled by $m_2$.

Note that such a cancellation cannot truncate a line since, in such a case, $m'$ would have less variables than $m$. Therefore each such cancellation truncates a
half-line to a segment. Then $\hat \chi$ truncates to a sum with monomials labeled by integer points in a multidimensional parallelepiped (with possibly some infinite sides).

We claim that there are no further truncation. Indeed, the only possibility would be a new inverse. Suppose we have a new $\bs 1^\sigma$ produced by a half line starting from a dominant monomial. Then the dominant monomial of that half-line is $n=\bs 2_{q q_1^{-1}\sigma}$.  Then we should also have a $\bs 1_\sigma$ produced on the edge of our parallelepiped. Expanding this $\bs 1_\sigma$ (still on the edge), we obtain $\bs 2_{q^{-1}q_1^{-1}\sigma}$. If this positive power does not cancel, multiplying by $n$ leads to a block of length two and a contradiction since the initial $qq$-character was slim. If this positive power cancels, then the $m_2^{-1}$ contains $\bs 2^{q^{-1}q_1^{-1}\sigma}$ and $n$ cannot be expanded to start with, which is also a contradiction.
 
Finally, if we have more than 2 different half-lines or segments or lines then $\chi$ is not prime.
If one of the segments is a degree zero segment, then it is just a factor of $\chi$. If we have segments of degrees $(1,-1)$ and $(-1,1)$ then their product is a prime rectangle which splits as a factor. If we have segments of degrees $(1,-1)$ only, then we should have either a half-line of degree $(-1,0)$ and at least one monomial $\bs 2_\sigma$ which is a common factor or a full line and at least two common monomials of the form $\bs 2_\sigma$. Then the product of this monomial (or monomials), of the segment and of the half-line (or line) is a factor.

If there are no segments then the $qq$-character can be prime only if there is only one half line or line.
\end{proof}

An example of non-slim degree zero $qq$-characters is given by non-slim squares. 
It starts with the top monomial $U_{0,0}=\bs 1_{q,q^3,\dots,q^{2k-1}}^{\kappa q_1^{-1},\kappa q_1^{-1}q^{2},\dots,\kappa q_1^{-1} q^{2k-2}} \bs 2_{\kappa q,\kappa q^3,\dots, \kappa q^{2k-1}}^{q_1^{-1},q_1^{-1}q^{2},\dots, q_1^{-1} q^{2k-2}}$. The monomial $U_{0,0}$ has degree zero. Each step of the algorithm does not create any new positive powers. Thus the result is a $k\times k$ square which we now describe.

For $a,b=0,1,\dots,k-1$, we define
$$
U_{a,b}=\prod_{i=0}^{a-1} \bs 1_{q^{2i-1}}\bs 2^{q_1q^{2i}} \prod_{i=a}^{k-2} \bs 1_{q^{2i+1}}\bs 2^{q_1^{-1}q^{2i}} \prod_{i=0}^{b-1} \bs 2_{\kappa q^{2i-1}}\bs 1^{\kappa q_1q^{2i}} \prod_{i=b}^{k-2} \bs 2_{\kappa q^{2i+1}}\bs 1^{\kappa q_1^{-1}q^{2i}}.
$$

Then $\chi_k=\sum_{a,b=0}^{k-1}U_{a,b}$ is a degree zero non-slim $qq$-character. The graph of  character $\chi_4$ is given in Figure \ref{non-slim square pic}.

\begin{figure} [H] 
\begin{center}
\begin{tikzpicture}[scale=0.53]
\node at  (-12,3.5) {\small $U_{0,0}$};
\node at  (-8,3.5)  {\small $U_{0,1}$};
\node at  (-4,3.5)  {\small $U_{0,2}$};
\node at  (0,3.5)  {\small $U_{0,3}$};

\node at  (-12,0) {\small $U_{1,0}$};
\node at  (-8,0)  {\small $U_{1,1}$};
\node at  (-4,0)  {\small $U_{1,2}$};
\node at  (0,0)  {\small $U_{1,3}$};

\node at  (-12,-3.5) {\small $U_{2,0}$};
\node at  (-8,-3.5)  {\small $U_{2,1}$};
\node at  (-4,-3.5)  {\small $U_{2,2}$};
\node at  (0,-3.5)  {\small $U_{2,3}$};

\node at  (-12,-7) {\small $U_{3,0}$};
\node at  (-8,-7)  {\small $U_{3,1}$};
\node at  (-4,-7)  {\small $U_{3,2}$};
\node at  (0,-7)  {\small $U_{3,3}$};

\draw[blue, ->] (-11,3.5)--node[below]{{\small $A_{1,1}^{-1}$}} (-8.7,3.5);
\draw[blue, ->] (-7,3.5)-- node[below]{{\small $A_{1,q^2}^{-1}$}}(-4.7,3.5);
\draw[blue,->] (-3,3.5)-- node[below]{{\small $A_{1,q^4}^{-1}$}}(-0.7,3.5);

\draw[blue, ->] (-11,0)--node[below]{{\small $A_{1,1}^{-1}$}} (-8.7,0);
\draw[blue, ->] (-7,0)-- node[below]{{\small $A_{1,q^2}^{-1}$}}(-4.7,0);
\draw[blue,->] (-3,0)-- node[below]{{\small $A_{1,q^4}^{-1}$}}(-0.7,0);

\draw[blue, ->] (-11,-3.5)--node[below]{{\small $A_{1,1}^{-1}$}} (-8.7,-3.5);
\draw[blue, ->] (-7,-3.5)-- node[below]{{\small $A_{1,q^2}^{-1}$}}(-4.7,-3.5);
\draw[blue,->] (-3,-3.5)-- node[below]{{\small $A_{1,q^4}^{-1}$}}(-0.7,-3.5);

\draw[blue, ->] (-11,-7)--node[below]{{\small $A_{1,1}^{-1}$}} (-8.7,-7);
\draw[blue, ->] (-7,-7)-- node[below]{{\small $A_{2,q^2}^{-1}$}}(-4.7,-7);
\draw[blue,->] (-3,-7)-- node[below]{{\small $A_{1,q^4}^{-1}$}}(-0.7,-7);

\draw[red, ->] (-12,3.0)--node[right] {{\small $A_{2,\kappa}^{-1}$}} (-12,0.7);
\draw[red, ->] (-12,-0.5)--node[right ]{{\small $A_{1,\kappa q^2}^{-1}$}} (-12,-2.8);
\draw[red, ->] (-12,-4.0)--node[right ]{{\small $A_{2,\kappa q^4}^{-1}$}} (-12,-6.3);

\draw[red, ->] (-8,3.0)--node[right ]{{\small $A_{2,\kappa }^{-1}$}} (-8,0.7);
\draw[red, ->] (-8,-0.5)--node[right ]{{\small $A_{1,\kappa q^2}^{-1}$}} (-8,-2.8);
\draw[red, ->] (-8,-4.0)--node[right ]{{\small $A_{2,\kappa q^4}^{-1}$}} (-8,-6.3);

\draw[red, ->] (-4,3.0)--node[right ]{{\small $A_{2,\kappa }^{-1}$}} (-4,0.7);
\draw[red, ->] (-4,-0.5)--node[right ]{{\small $A_{1,\kappa q^2}^{-1}$}} (-4,-2.8);
\draw[red, ->] (-4,-4.0)--node[right ]{{\small $A_{2,\kappa q^4}^{-1}$}} (-4,-6.3);

\draw[red, ->] (0,3.0)--node[right ]{{\small $A_{2,\kappa }^{-1}$}} (0,0.7);
\draw[red, ->] (0,-0.5)--node[right ]{{\small $A_{1,\kappa q^2}^{-1}$}} (0,-2.8);
\draw[red, ->] (0,-4.0)--node[right ]{{\small $A_{2,\kappa q^4}^{-1}$}} (0,-6.3);

\end{tikzpicture}
\end{center}
\caption{The $4\times 4$ non-slim square $\gl_{2,1}$ $qq$-character.}\label{non-slim square pic}
\end{figure}
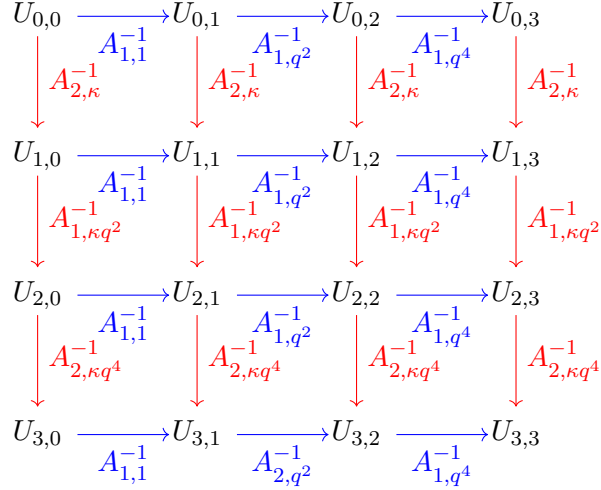

 Several more non-slim degree zero  $\gl_{2,1}$ $qq$-characters are given in Figures \ref{36 pic} and \ref{25 2 pic}.

\subsection{The cases of $\mathfrak{gl}_{n,n}$, $\mathfrak{gl}_{n+1,n}$, $\hat{\mathfrak{gl}}_{n,n}$.}  \label{sec gl}
We again work with two independent variables $q$ and $q_1$. We set $q_2=q^{-1}q_1^{-1}$.

Let $I=\{1,\dots,r\}$. Let  $C$ be an $r\times r$ matrix with all zero entries except
\begin{align}\label{n|n Cartan}
c_{ii}=q-q^{-1}, \qquad c_{2k-1,2k}=c_{2k,2k-1}=q_1-q_1^{-1}, \qquad c_{2k+1,2k}=c_{2k,2k+1}=q_2-q_2^{-1}.
\end{align}
We say that $C$ is of type $\mathfrak{gl}_{n,n}$ if $r=2n-1$ and of type $\mathfrak{gl}_{n+1,n}$ if $r=2n$.

Then for $i\in I$ we have
\begin{align}\label{n|n A}
    A_{i}= \begin{cases}  (\bs{i-1})_{q_2}^{q_2^{-1}} \, ({\bs i})_q^{q^{-1}}\,  (\bs{i+1})_{q_1}^{q_1^{-1}} & \qquad (i=2k-1),\\
    (\bs{i-1})_{q_1}^{q_1^{-1}}\,  ({\bs i})_q^{q^{-1}}\, (\bs{i+1})_{q_2}^{q_2^{-1}} &\qquad (i=2k),
        \end{cases}
\end{align}
where  by convention $\bs 0_\sigma=(\bs{r+1})_\sigma=1$.

We start with the dominant degree zero monomial $m_+=\bs 1_{q}^{qq_2^2}$ and apply the algorithm. The result is a slim $qq$-character with $r+1$ terms which we now explain.

For $i=0,\dots,r$, we set
\begin{align}\label{n|n V}
V_i= \begin{cases} (\bs{i})_{q_2q^{-k}}^{q_1q^{-k+1}}\, ({\bs {i+1}})_{q^{-k+1}}^{q_2^2q^{-k+1}}  & \qquad (i=2k), \\
& \\
(\bs{i})_{q^{-k-1}}^{q_2^2q^{-k+1}}\, ({\bs {i+1}})_{q_1^{-1}q^{-k}}^{q_1q^{-k}}  & \qquad  (i=2k+1). 
\end{cases}
\end{align}
Then 
$$
 V_{2k+1}=A^{-1}_{2k+1,q^{-k}} V_{2k}, \qquad 
V_{2k+2}=A^{-1}_{2k+2,q_2q^{-k}} V_{2k+1}.
$$
Thus $\chi=\sum_{i=0}^{r}V_i$ is a slim  linear $qq$-character which we call the vector $qq$-character of $\gl_{n,n}$ type if $r=2n-1$ and of $\gl_{n+1,n}$ type if $r=2n$.

The graph of $qq$-character $\chi$ is shown in Figure \ref{A vector pic}.

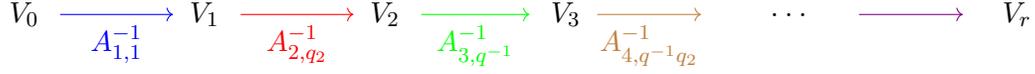
\begin{figure} [H] 
\begin{center}
\begin{tikzpicture}[scale=0.6]
\node at  (-12,-4) {\small $V_0$};
\node at  (-8,-4)  {\small $V_1$};
\node at  (-4,-4)  {\small $V_2$};
\node at  (0,-4)  {\small $V_3$};
\node at  (5,-4)  {$\dots$};
\node at  (8,-4)  {};
\node at  (10,-4)  {\small $V_r$};
\draw[blue, ->] (-11.2,-4)--node[below]{{\small $A_{1,1}^{-1}$}} (-8.7,-4);
\draw[red, ->] (-7.2,-4)-- node[below]{{\small $A_{2,q_2}^{-1}$}}(-4.7,-4);
\draw[green,->] (-3.2,-4)-- node[below]{{\small $A_{3,q^{-1}}^{-1}$}}(-0.8,-4);
\draw[brown, ->]  (0.7,-4)--node[below]{{\small $A_{4,q^{-1}q_2}^{-1}$}}(3,-4);
\draw[violet, ->] (6.5,-4)--(8.8,-4);
\end{tikzpicture}
\end{center}
\caption{The vector $\gl_{n,n}$ and  $\gl_{n+1,n}$ $qq$-characters.}\label{A vector pic}
\end{figure}

\medskip

Similarly, one constructs another $qq$-character $\chi^\vee$ with $r+1$ terms starting with dominant monomial 
$m_+=(\bs r)_q^{qq_2^2}$ if $r=2n$ and  $m_+=(\bs r)_q^{qq_1^2}$ if $r=2n-1$.

\medskip

Now we consider the affinization of the Cartan matrix adding one more color $0$. We do it only in the case $r=2n-1$. We have $\hat I=\{0,1,\dots,r\}\supset I$. The Cartan matrix elements $c_{ij}$ are given by the same equation \eqref{n|n Cartan}  where all indices are taken modulo $r+1$.\footnote{It is known that in this case one can introduce one extra independent parameter $q_3$ without changing the structure of $qq$-characters, see \cite{FJMV}, however, then $c_{0,1}=(q_2-q_2^{-1})q_3$ does not have the postulated form $\sigma-\sigma^{-1}$. Therefore we do not cover it here.}

The affine roots $A_i$ have form \eqref{n|n A}  where all indices are taken modulo $r+1$.

As far as we know in this case we do not have finite slim $qq$-characters. 
But there are infinite ones. We start with the monomial $\hat V_0=\bs 0^{q_2^{-1}}_{q_2}\bs 1_q^{qq^2}$. We declare it to be $1$-dominant and $0$-anti-dominant.  We now describe the resulting $qq$-character. 

Let $\hat V_i$ $(i\in\Z)$ be given by formula \eqref{n|n V} where ${\bs i}$ is taken modulo $r+1$. Set $\hat V_{i,\sigma}=\tau_\sigma (\hat V_i).$

We note the periodicity
$$
\hat V_{i+r+1}=\hat V_{i,Q},\qquad Q=q^{-n}.
$$

Then 
$$
\hat \chi=\sum_{i\in\Z} \hat V_i=\sum_{j\in \Z}\sum_{i=0}^{r}\hat V_{i,Q^j}
$$ 
is a slim $qq$-character which we call the vector $qq$-character of $\hat \gl_{n,n}$ type.

The graph of $\hat\chi$ is given in Figure \ref{mm hat vector}.

\begin{figure} [H] 
\begin{center}
\begin{tikzpicture}[scale=0.6]
\node at  (-14,-4)  {$\dots$};
\node at  (-12,-4) {\small $\hat V_{r,Q^{-1}}$};
\node at  (-8.5,-4)  {\small $\hat V_0$};
\node at  (-5,-4)  {\small $\dots$};
\node at  (-1.5,-4)  {\small $\hat V_r$};
\node at  (2,-4)  {\small $\hat V_{0,Q}$};
\node at  (5.5,-4)  {\small $\dots$};
\node at  (9,-4)  {\small $\hat V_{r,Q}$};
\node at  (12.5,-4)  {\small $\dots$};
\draw[ ->] (-11.0,-4)--node[below]{{\small $A_{0,qq_2}^{-1}$}} (-9,-4);
\draw[blue, ->] (-7.8,-4)-- node[below]{{\small $A_{1,1}^{-1}$}}(-5.7,-4);
\draw[violet,->] (-4.2,-4)-- node[below]{{\small $A_{r,qQ}^{-1}$}}(-2.0,-4);
\draw[ ->]  (-1,-4)--node[below]{{\small $A_{0,qq_2Q}^{-1}$}}(1.2,-4);
\draw[blue, ->] (2.9,-4)-- node[below]{{\small $A_{1,Q}^{-1}$}}(4.9,-4);
\draw[violet, ->] (6.3,-4)--node[below]{{\small $A_{r,qQ^2}^{-1}$}}(8.3,-4);
\draw[ ->] (9.8,-4)--node[below]{{\small $A_{r,qq_2Q^2}^{-1}$}}(11.8,-4);
\end{tikzpicture}
\end{center}
\caption{The vector $\hat \gl_{n,n}$ $qq$-character.}\label{mm hat vector}
\end{figure}
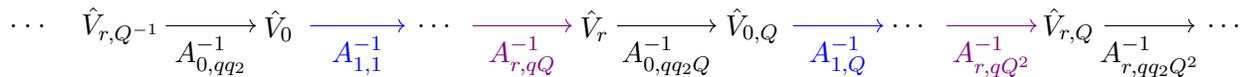

In particular, the restriction to the non-affine part has the form
$\rho_I(\hat \chi)=\sum_{i\in\Z} \tau_{Q^i}\chi$.

Similarly, one can construct another slim linear $qq$-character $\hat{\chi}^\vee$ starting from monomial $\bs r_q^{qq_1^2}\bs 0^{q_1^{-1}}_{q_1}$ which is $r$-dominant and $0$-anti-dominant.

\subsection{The cases of $\mathfrak{osp}_{2n,2n}$, $\mathfrak{osp}_{2n+2,2n}$, $\hat {\mathfrak{osp}}_{2n,2n}$, $\hat {\mathfrak{osp}}_{2n+2,2n}$  }\label{sec osp}

We stay with two independent variables $q$ and $q_1$. We set $q_2=q^{-1}q_1^{-1}$.

Let $I=\{1,\dots,r\}$, $r\geq 3$. The Cartan matrix is the same as  \eqref{n|n Cartan} except for $c_{ir}$ and $c_{ri}$ with $i=r-1,r-2$. For these elements we have
\begin{align}\label{n|n osp Cartan}
c_{r,r-2}=c_{r-1,r-2}, \qquad c_{r,r-1}=c_{r-1,r}= (-1)^r(q_1 q_2^{-1}-q_2q_1^{-1}).
\end{align}
Then the bottom corner $3\times 3$ submatrix of the Cartan matrix is
$$
\begin{pmatrix}
q-q^{-1} & q_1-q_1^{-1}& q_1-q_1^{-1} \\
q_1-q_1^{-1} & q-q^{-1} &q_1^{-1} q_2-q_1q_2^{-1}   \\
q_1-q_1^{-1} & q_1^{-1} q_2-q_1q_2^{-1} & q-q^{-1} 
\end{pmatrix} \ \ {\text{or}}\ \  
\begin{pmatrix}
q-q^{-1} & q_2-q_2^{-1}& q_2-q_2^{-1} \\
q_2-q_2^{-1} & q-q^{-1} &q_1 q_2^{-1}-q_1^{-1}q_2   \\
q_2-q_2^{-1} &q_1 q_2^{-1}-q_1^{-1}q_2  & q-q^{-1} 
\end{pmatrix},
$$
 where the first matrix corresponds to the case of odd $r$ and the second one to the case of even $r$. 

We say that $C$ is of type $\mathfrak{osp}_{2n+2,2n}$ if $r=2n+1$ and of type $\mathfrak{osp}_{2n,2n}$ if $r=2n$.

Then $A_i$, $i=1,2,\dots,r-3$, are given by \eqref{n|n A}. In addition if $r=2n+1$ then
\begin{align*}
&A_{r-2}= (\bs{r-2})_q^{q^{-1}} (\bs{r-3})_{q_2}^{q_2^{-1}}  (\bs{r-1})_{q_1}^{q_1^{-1}} (\bs{r})_{q_1}^{q_1^{-1}}, \\
&A_{r-1}= (\bs{r-1})_q^{q^{-1}}(\bs{r-2})_{q_1}^{q_1^{-1}} (\bs{r})_{q_1q_2^{-1}}^{q_1^{-1}q_2}, \\
&A_{r}= (\bs{r})_q^{q^{-1}}(\bs{r-2})_{q_1}^{q_1^{-1}} (\bs{r-1})^{q_1q_2^{-1}}_{q_1^{-1}q_2}. 
\end{align*}
The formulas for $A_{r-2}$, $A_{r-1}$, $A_r$ when $r=2n$ are obtained by exchanging $q_2$ and $q_1$.

We have a slim $qq$-character with $2r$ terms starting with dominant monomial $V_0=\bs 1_q^{qq_2^2}$, 
$\chi=V_0+V_1+\dots + V_{r-1}+V_{\overline{r-1}}+ \dots +V_{\overline{1}}+V_{\overline 0}$.

We give the formulas for the case of $r=2n+1$.
The monomials $V_i$ with $i=1,\dots,r-3$ are given by \eqref{n|n V}. In addition
\begin{equation}
\begin{aligned}\label{osp V r}
& V_{r-2}= (\bs{r-2})_{q^{-n}}^{q_2^2q^{-n+2}}
(\bs{r-1})^{q_1q^{-n+1}}_{q_1^{-1}q^{-n+1}}
(\bs{r})^{q_1q^{-n+1}}_{q_1^{-1}q^{-n+1}}, \\
& V_{r-1}=
(\bs{r-1})^{q_1q^{-n+1}}_{q_1^{-1}q^{-n-1}}
(\bs{r})^{q_2^3q^{-n+2}}_{q_1^{-1}q^{-n+1}}, \\
&
V_{\overline{r-1}}=
(\bs{r})^{q_1q^{-n+1}}_{q_1^{-1}q^{-n-1}}
(\bs{r-1})^{q_2^3q^{-n+2}}_{q_1^{-1}q^{-n+1}},\\ 
& V_{\overline{r-2}}= (\bs{r-2})^{q^{-n}}_{q_2^2q^{-n+2}}
(\bs{r-1})^{q_2^3q^{-n+2}}_{q_2q^{-n}}
(\bs{r})^{q_2^3q^{-n+2}}_{q_2q^{-n}}, \\
\end{aligned}
\end{equation}
and finally
\begin{align}\label{osp V overline}
V_{\overline{i}}= \begin{cases} (\bs{i})_{q_2q^{-2n+k}}^{q_2^3q^{-2n+k}}\, ({\bs {i+1}})^{q^{-2n+k-1}}_{q_2^2q^{-2n+k-1}}  & \qquad (i=2k), \\
& \\
(\bs{i})^{q^{-2n+k-1}}_{q_2^2q^{-2n+k+1}}\, ({\bs {i+1}})_{q_2q^{-2n+k-1}}^{q_2^3q^{-2n+k+1}}  & \qquad  (i=2k+1). 
\end{cases}
\end{align}

The graph of this $qq$-character is given in Figure \ref{osp vector}.

\begin{figure} [H] 
\begin{center}
\begin{tikzpicture}[scale=0.5]
\node at  (-12,-4) {\small $V_0$};
\node at  (-8,-4)  {\small $V_1$};
\node at  (-6,-4)  { $\dots$};
\node at  (-4,-4)  {\small $V_{r-3}$};
\node at  (0,-4)  {\small $V_{r-2}$};
\node at  (4,-1)  {\small $V_{r-1}$};
\node at  (4,-7)  {\small $V_{\overline{r-1}}$};
\node at  (8,-4)  {\small $V_{\overline{r-2}}$};

\node at  (14,-4)  {$\dots$};
\node at  (10,-4)  {};
\node at  (12,-4)  {\small $V_{\overline{r-3}}$};
\node at  (16,-4)  {\small $ V_{\overline{1}}$};
\node at  (20,-4)  {\small $ V_{\overline{0}}$};

\draw[blue, ->] (-11.2,-4)--node[below]{{\tiny $A_{1,1}^{-1}$}} (-8.7,-4);
\draw[magenta,->] (-3.1,-4)-- node[below]{{\tiny $A_{r-2,q^{-n+1}}^{-1}$}}(-0.8,-4);
\draw[brown, ->]  (0.7,-3.5)--node[left]{{\tiny $A_{r-1,q^{-n+1}q_2}^{-1}\ $}}(3,-1);
\draw[violet, ->]  (0.7,-4.5)--node[left]{{\tiny $A_{r,q^{-n+1}q_2}^{-1}$}}(3,-7);
\draw[brown, ->]  (5,-7)--node[right]{{\tiny $A_{r-1,q^{-n+1}q_2}^{-1}\ $}}(7.3,-4.4);
\draw[violet, ->]  (5,-1)--node[right]{{\tiny $A_{r,q^{-n+1}q_2}^{-1}$}}(7.3,-3.4);

\draw[magenta, ->] (9,-4)--node[below]{{\tiny $A_{r-2,q^{-n+1}q_2^2}^{-1}$}} (11,-4);
\draw[blue, ->] (16.5,-4)--node[below]{{\tiny $A_{1,q^{-2n}q_2^2}^{-1}$}} (19.5,-4);
\end{tikzpicture}
\end{center}
\caption{The vector $\mathfrak{osp}_{2n+2,2n}$  $qq$-character.}\label{osp vector}
\end{figure}
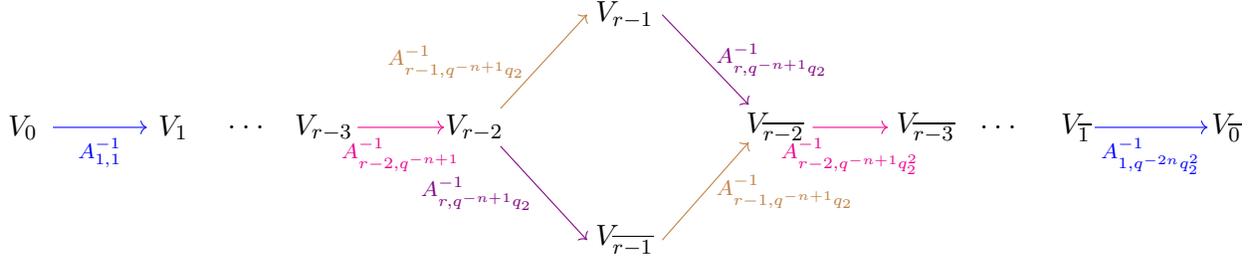

The formulas in the case $r=2n$ are similar. In particular the graph is the same (with two vertices less) the only difference is the shifts of the affine roots.

\medskip

Now we go to the affinization.
Let $\hat I=\{0,1,\dots,r\}$. The affine Cartan matrix $\hat C$ is determined by $\hat c_{ij}=c_{ij}$, $i,j\in I$ and
the other non-zero entries are $c_{00}=q-q^{-1}$, $c_{02}=c_{20}=q_1-q_1^{-1}$, $c_{01}=c_{10}=q_1^{-1}q_2-q_1q_2^{-1}$.
Then the left upper corner looks similar to the right bottom corner:
$$
\begin{pmatrix}
q-q^{-1} & q_1^{-1} q_2-q_1q_2^{-1} & q_1-q_1^{-1} \\
q_1^{-1} q_2-q_1q_2^{-1}  & q-q^{-1} & q_1-q_1^{-1}  \\
q_1-q_1^{-1} & q_1-q_1^{-1} & q-q^{-1} 
\end{pmatrix}.
$$

We start with a monomial $\hat V_0=\bs 1_q^{qq_2^2} \bs 0_{q^{-1}}^{q_1^2 q}$ which is $1$-dominant and $0$-anti-dominant and obtain an infinite slim $qq$-character which we now describe.

Let $\hat V_i=V_i$ and $\hat V_{\overline{i}}=V_{\overline{i}}$  be given by \eqref{n|n V} and by \eqref{osp V overline}  if $i=2,\dots, r-3$,  and by \eqref{osp V r} if $i=r-2,r-1$. 
In addition we set
$$
Q=q_2^2q^{-2n+4}
$$
and then
\begin{align*}
&\hat V_0=\bs 1_q^{qq_2^2} \bs 0_{q^{-1}}^{q_1^2 q}, \qquad 
&\hat V_{\overline {0}} = \bs 1_{q_2^2 q^{-2n+3}}^{q^{-2n+3}}0_{q_2^2q^{-2n+5}}^{q_2^4q^{-2n+5}},\qquad\qquad \\
&\hat V_1=\bs 2_{q_1^{-1}}^{q_1}
\bs 1_{q^{-1}}^{qq_2^2}\bs 0_{q^{-1}}^{qq_2^2}, 
\qquad
&\hat V_{\overline {1}}=\bs 2_{q_2q^{-2n+3}}^{q_2^3q^{-2n+5}}
\bs 0_{q_2^2 q^{-2n+5}}^{q^{-2n+3}} \bs 1_{q_2^2 q^{-2n+5}}^{q^{-2n+3}}.  \ 
\end{align*}

We also use the notation $\hat V_{i,\sigma}=\tau_\sigma V_i$ and 
$\hat V_{\overline{i},\sigma}=\tau_\sigma V_{\overline{i}}$.

Then we have a slim $qq$-character
$\hat \chi =\sum_{j\in \Z}\sum_{i=0}^{r-1} (\hat V_{i,Q^j}+ \hat V_{\overline{i},Q^j})$. 
We call this $qq$-character the vector $qq$-character of $\hat{\mathfrak{osp}}_{2n,2n}$ type.

The graph of the vector $qq$-character of $\hat{\mathfrak{osp}}_{2n,2n}$ type is given in Figure \ref{osp hat vector}.

\begin{figure} [H] 
\begin{center}
\begin{tikzpicture}[scale=0.45]

\node at  (-14,-4) {$\dots$};
\node at  (-12,-4) {\tiny $\hat V_{\overline{2},Q^{-1}} $};
\node at  (-8,-4)  {\tiny $\hat V_{\overline{1}, Q^{-1}}$};
\node at  (-6,-1)  {\tiny $\hat V_{\overline{0},Q^{-1}}$};
\node at  (-6,-7)  {\tiny $\hat V_0$};
\node at  (0,-4)  { $\dots$};
\node at  (-4,-4)  {\tiny $\hat V_{1}$};
\node at  (8,-4)  {\tiny $\hat  V_{\overline{r-2}}$};
\node at  (6,-1)  {\tiny $\hat V_{r-1}$};
\node at  (6,-7)  {\tiny $\hat V_{\overline{r-1}}$};
\node at  (4,-4)  {\tiny $\hat V_{r-2}$};

\node at  (12,-4)  {\tiny$\dots$};
\node at  (16,-4)  {\tiny $\hat  V_{\overline{1}}$};
\node at  (20,-4)  {\tiny $\hat  V_{1,Q}$};
\node at  (18,-1)  {\tiny $\hat  V_{\overline{0}}$};
\node at  (18,-7)  {\tiny $\hat  V_{0,Q}$};
\node at  (21.5,-4)  { $\dots$};

\draw[red, ->] (-11,-4)--node[below]{{\tiny $A_{2,q_2^{-1}}^{-1}$}} (-9,-4);
\draw[blue, ->] (-8,-3.5)--node[left]{{\tiny $A_{1,1}^{-1}$}} (-6.5,-1.5);
\draw[blue, ->] (-5.5,-6.5)--node[right]{{\tiny $A_{1,1}^{-1}$}} (-4.0,-4.5);
\draw[ ->] (-8,-4.5)--node[left]{{\tiny $A_{0,1}^{-1}$}} (-6.5,-6.5);
\draw[ ->] (-5.5,-1.5)--node[right]{{\tiny $A_{0,1}^{-1}$}} (-4.0,-3.5);

\draw[red,->] (-3.1,-4)-- node[below]{{\tiny $A_{2,q^2}^{-1}$}}(-0.8,-4);
\draw[magenta,->] (1.1,-4)--(3.2,-4); 
\draw[brown, ->]  (4,-3.5)--node[left]{{\tiny $A_{r-1,Q^{1/2}}^{-1}\ $}}(5.5,-1.5);
\draw[violet, ->]  (4,-4.5)--node[left]{{\tiny $A_{r,Q^{1/2}}^{-1}$}}(5.5,-6.5);
\draw[violet, ->]  (6.5,-1.5)--node[right]{{\tiny $A_{r-1,Q^{1/2}}^{-1}\ $}}(8,-3.5);
\draw[brown, ->]  (6.5,-6.5)--node[right]{{\tiny $A_{r,Q^{1/2}}^{-1}$}}(8,-4.5);

\draw[magenta, ->] (9,-4)--(11,-4);

\draw[red, ->] (13,-4)--node[below]{{\tiny $A_{2,Qq_2^{-1}}^{-1}$}} (15.5,-4);
\draw[blue, ->] (16,-3.5)--node[left]{{\tiny $A_{1,Q}^{-1}$}} (17.5,-1.5);
\draw[blue, ->] (18.5,-6.5)--node[right]{{\tiny $A_{1,Q}^{-1}$}} (20.0,-4.5);
\draw[ ->] (16,-4.5)--node[left]{{\tiny $A_{0,Q}^{-1}$}} (17.5,-6.5);
\draw[ ->] (18.5,-1.5)--node[right]{{\tiny $A_{0,Q}^{-1}$}} (20.0,-3.5);

\end{tikzpicture}
\end{center}
\caption{The vector $\hat{\mathfrak{osp}}_{2n+2,2n}$  $qq$-characters.}\label{osp hat vector}
\end{figure}

The vector $qq$-character of $\hat{\mathfrak{osp}}_{2n+2,2n}$ type corresponding to $r=2n$ is similar.

\subsection{The case of the vector representation of D$(2,1;\alpha)$}\label{sec D}
In this section we work with three independent variables: $q=q_0,q_1,q_2$. We also use $q_3=(q_0q_1q_2)^{-1}$ and $p_i=q^2q_i^2$, $i=1,2,3$.

Set $I=\{1,2,3\}$. In this section we study the following Cartan matrix:
\begin{align}\label{D Cartan}
&C=
\begin{pmatrix}
 q_0-q_0^{-1} & q_3- q_3^{-1} & q_2- q_2^{-1} \\
 q_3-  q_3^{-1} & q_0- q_0^{-1} & q_1- q_1^{-1} \\
 q_2-q_2^{-1} & q_1- q_1^{-1} & q_0- q_0^{-1} \\
\end{pmatrix}.
\end{align}

We have
\begin{align}\label{D affine roots}
A_{1}={\bf 1}_{q_0}^{q_0^{-1}}  {\bf 2}_{q_3}^{q_3^{-1}}  
{\bf 3}_{q_2}^{q_2^{-1}}, \qquad 
A_{2}=
{\bf 1}_{q_3}^{q_3^{-1}} 
{\bf 2}_{q_0}^{q_0^{-1}}  {\bf 3}_{q_1}^{q_1^{-1}} \,,\qquad
A_{3}= {\bf 1}_{q_2}^{q_2^{-1}}  {\bf 2}_{q_1}^{q_1^{-1}}  {\bf 3}_{q_0}^{q_0^{-1}} \,.
\end{align}
We call $qq$-characters corresponding to this data of D$(2,1;\alpha)$ type.

Note the natural symmetry under simultaneous permutations of colors $1, 2, 3$ and variables $q_1,q_2, q_3$.

Given a $qq$-character, we have three restriction maps $\rho_{\{1,2\}}$, $\rho_{\{1,3\}}$, and  $\rho_{\{2,3\}}$ which produce $qq$-characters of $\mathfrak{gl}_{2,1}$ type (with  $\mathfrak{gl}_{2,1}$  non-elliptic parameters $q_3,q_2$, and $q_1$ respectively).  

We start with a dominant monomial $V_{0,0}^{312}=\bs 3^{q_2^{-1}}_{q_0^2q_2}$. Then $\rho_{\{1,3\}}V_{0,0}^{312}$ coincides (up to a shift by $q_0q_2$) with the dominant monomial for the vector representation of  $\mathfrak{gl}_{2,1}$.

Applying the algorithm, we obtain the following result. 

For $a\in\Z_{\geq 0}$,  $b=0,1$, define the monomials $V_{a,b}^{312}$ by the formulas
\begin{equation}
\begin{aligned}\label{D vector monom}
&V^{312}_{2k,0}
={\bf 1}^{q_0 p_1^{-k}}_{q_0}
{\bf 2}^{q_3^{-1}}_{q_3^{-1} p_1^{-k}}{\bf 3}^{q_2^{-1} }_{q_0^2q_2 p_1^{-k}}\,, \\
&V^{312}_{2k+1,0}
={\bf 1}^{q_0q_2^2 p_1^{-k}}_{q_0 }
{\bf 2}^{q_3^{-1} }_{q_1^{-2}q_3^{-1} p_1^{-k}}{\bf 3}^{q_2^{-1} }_{q_2 p_1^{-k}}\,, \\
&V^{312}_{2k,1}
={\bf 1}^{q_0 p_1^{-k}}_{q_0^{-1}}
{\bf 2}^{q_3 }_{q_3^{-1} p_1^{-k}}{\bf 3}^{q_2 }_{q_0^2q_2 p_1^{-k}}\,, \\
&V^{312}_{2k+1,1}
={\bf 1}^{q_0q_2^2 p_1^{-k}}_{q_0^{-1} }
{\bf 2}^{q_3}_{q_1^{-2}q_3^{-1} p_1^{-k}}{\bf 3}^{q_2 }_{q_2 p_1^{-k}}\,.
\end{aligned}
\end{equation}

Then $\chi^{312}=\sum_{a,b} V^{312}_{a,b}$ is a slim linear $qq$-character which we call the vector $qq$-character of D$(2,1;\alpha)$ type. The graph is given in Figure \ref{D vector pic}.

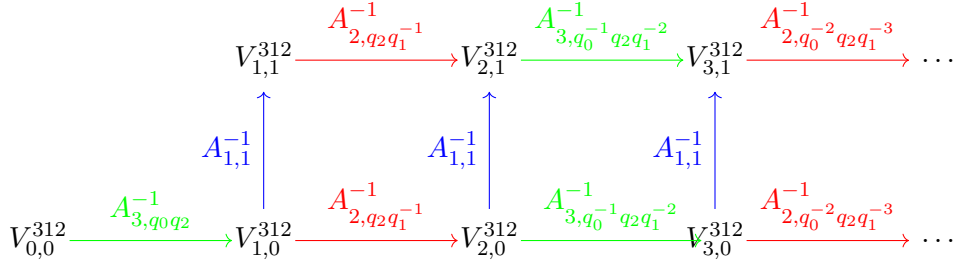
\begin{figure} [H]
\begin{center}
\begin{tikzpicture}[scale=0.6]

\node at  (5,4)  {\small $V^{312}_{1,1}$};
\node at  (10,4)  {\small$V^{312}_{2,1}$};

\node at  (0,0)  {\small $V^{312}_{0,0}$};
\node at  (5,0)  {\small $V^{312}_{1,0}$};
\node at  (10,0)  {\small $V^{312}_{2,0}$};

\node at  (15,4)  {\small$V^{312}_{3,1}$};
\node at  (15,0)  {\small $V^{312}_{3,0}$};

\node at  (20,4)  {\small$\dots$};
\node at  (20,0)  {\small $\dots$};

\draw[blue,->]  (5,0.7)--node[left]{{\small $A_{1,1}^{-1}$}}(5,3.3);
\draw[blue,->]  (10,0.7)--node[left]{{\small $A_{1,1}^{-1}$}}(10, 3.3);
\draw[blue,->]  (15,0.7)--node[left]{{\small $A_{1,1}^{-1}$}}(15, 3.3);

\draw[red,->] (5.7,4)-- node[above]{{\small $A_{2,q_2q_1^{-1}}^{-1}$}}(9.3,4);
\draw[green,->]  (10.7,4)--node[above]{{\small $A_{3,q_0^{-1}q_2q_1^{-2}}^{-1}$}}(14.3,4);

\draw[green,->] (0.7,0)-- node[above]{{\small $A_{3,q_0q_2}^{-1}$}}(4.3,0);
\draw[red,->] (5.7,0)-- node[above]{{\small $A_{2,q_2q_1^{-1}}^{-1}$}}(9.3,0);
\draw[green,->]  (10.7,0)--node[above]{{\small $A_{3,q_0^{-1}q_2q_1^{-2}}^{-1}$}}(14.7,0);

\draw[red,->] (15.7,0)-- node[above]{{\small $A_{2,q_0^{-2}q_2q_1^{-3}}^{-1}$}}(19.3,0);
\draw[red,->] (15.7,4)-- node[above]{{\small $A_{2,q_0^{-2}q_2q_1^{-3}}^{-1}$}}(19.3,4);

\end{tikzpicture}
\end{center}
\caption{The vector D$(2,1;\alpha)$ $qq$-character.}\label{D vector pic}
\end{figure}

Note that the $\gl_{2,1}$ character $\rho_{\{1,3\}}\chi^{312}$ consists
of one vector $qq$-character (with three terms) described in Section \ref{sec gl} (also discussed in Section \ref{sec 21} as a degree zero segment) and an infinite sum of shifts of a prime $2\times 2$  rectangle described in Section \ref{sec 21}.  The $\gl_{2,1}$ character $\rho_{\{2,3\}}\chi^{312}$ gives two  $\gl_{2,1}$ degree zero half-line $qq$-characters whose graphs in Figure \ref{D vector pic} are the horizontal half-lines. 
The $\gl_{2,1}$ character $\rho_{\{1,2\}}\chi^{312}$ is a sum of the trivial $qq$-character with an infinite sum of shifts of a prime $2\times 2$  rectangle.

\medskip

Using the symmetries we obtain six slim linear $qq$-characters $\chi^{abc}$, where $\{a,b,c\}=\{1,2,3\}$.

We also can construct similar 
 six slim linear $qq$-characters $\chi_{abc}$, where $\{a,b,c\}=\{1,2,3\}$ which have an anti-dominant monomial. For example, the $qq$-character $\chi_{312}$ starts at the anti-dominant monomial
 $\bs 3^{q_2}_{q_0^{-2}q_2^{-1}}$. The formulas for other monomials are obtained from \eqref{D vector monom} by replacing $q_i$ and $p_i$ with $q_i^{-1}$, $p_i^{-1}$. The graph of $\chi_{312}$ is obtained from Figure \ref{D vector pic} by changing the direction of all edges.
 
\subsection{The case of the vector representation of $\hat{\textrm{D}}(2,1;\alpha)$}\label{sec D hat}
We affinize the results of the previous section.  

We have the same three independent variables: $q=q_0,q_1,q_2$. We still use $q_3=(qq_1q_2)^{-1}$ and $p_i=q^2q_i^2$, $i=1,2,3$.

Let $\hat I=\{0,1,2,3\}$. We set 
\begin{align}\label{D hat Cartan}
& \hat C=
\begin{pmatrix}
q_0 - q_0^{-1} & q_1- q_1^{-1} & q_2- q_2^{-1} & q_3- q_3^{-1} \\
q_1- q_1^{-1} & q_0- q_0^{-1} & q_3- q_3^{-1} & q_2- q_2^{-1} \\
q_2- q_2^{-1} & q_3-  q_3^{-1} & q_0- q_0^{-1} & q_1- q_1^{-1} \\
q_3- q_3^{-1} & q_2- q_2^{-1} & q_1- q_1^{-1} & q_0- q_0^{-1} \\
\end{pmatrix}.
\end{align}

This deformed Cartan matrix produces the affine roots:
\begin{align*}
&A_{0}={\bf 0}_{q_0}^{q_0^{-1}} {\bf 1}_{q_1}^{q_1^{-1}} 
{\bf 2}_{q_2}^{q_2^{-1}}  {\bf 3}_{q_3}^{q_3^{-1}} \,,\quad
A_{1}={\bf 0}_{q_1}^{q_1^{-1}}  
{\bf 1}_{q_0}^{q_0^{-1}}  {\bf 2}_{q_3}^{q_3^{-1}}  
{\bf 3}_{q_2}^{q_2^{-1}} \,,\\
&A_{2}={\bf 0}_{q_2}^{q_2^{-1}} 
{\bf 1}_{q_3}^{q_3^{-1}} 
{\bf 2}_{q_0}^{q_0^{-1}}  {\bf 3}_{q_1}^{q_1^{-1}} \,,\quad
A_{3}={\bf 0}_{q_3}^{q_3^{-1}}  {\bf 1}_{q_2}^{q_2^{-1}}  {\bf 2}_{q_1}^{q_1^{-1}}  {\bf 3}_{q_0}^{q_0^{-1}} \,.
\end{align*}
We call $qq$-characters corresponding to this Cartan matrix of $\hat {\textrm D}(2,1;\alpha)$ type.

We still have symmetries given by permutations of colors $1,2,3$ with simultaneous permutations of variables $q_1,q_2, q_3$.

The restriction $\rho_{\{1,2,3\}} A_i$, $i=1,2,3$, gives the affine roots from the previous section. Moreover, for any $J\subset\hat I$ the restrictions $\rho_J$ reproduce the affine roots of $\gl_{2,1}$ type if $|J|=2$ and of
D$(2,1;\alpha)$ type if $|J|=3$.

We start with the monomial $m=V_{0,0}={\bf 0}^{q_1^{-1}q_3^{-2}}_{q_1}{\bf 3}^{q_2^{-1}}_{q_0^2q_2}$ which we declare $3$-dominant and $0$-anti-dominant. Then the algorithm  produces the following result.

Set
\begin{equation}
\begin{aligned}\label{D hat vector monom}
&V^{312}_{2k,2\ell}
={\bf 0}^{q_1^{-1}q_3^{-2} p_1^{-k}}_{q_1 p_1^{-\ell}}{\bf 1}^{q_0 p_1^{-k}}_{q_0 p_1^{-\ell}}
{\bf 2}^{q_3^{-1} p_1^{-\ell}}_{q_3^{-1} p_1^{-k}}{\bf 3}^{q_2^{-1} p_1^{-\ell}}_{q_0^2q_2 p_1^{-k}}\,, \\
&V^{312}_{2k+1,2\ell}
={\bf 0}^{q_1^{-1} p_1^{-k}}_{q_1 p_1^{-\ell}}{\bf 1}^{q_0q_2^2 p_1^{-k}}_{q_0 p_1^{-\ell}}
{\bf 2}^{q_3^{-1} p_1^{-\ell}}_{q_1^{-2}q_3^{-1} p_1^{-k}}{\bf 3}^{q_2^{-1} p_1^{-\ell}}_{q_2 p_1^{-k}}\,, \\
&V^{312}_{2k,2\ell+1}
={\bf 0}^{q_1^{-1}q_3^{-2} p_1^{-k}}_{q_1^{-1} p_1^{-\ell}}{\bf 1}^{q_0 p_1^{-k}}_{q_0^{-1} p_1^{-\ell}}
{\bf 2}^{q_3 p_1^{-\ell}}_{q_3^{-1} p_1^{-k}}{\bf 3}^{q_2 p_1^{-\ell}}_{q_0^2q_2 p_1^{-k}}\,, \\
&V^{312}_{2k+1,2\ell+1}
={\bf 0}^{q_1^{-1} p_1^{-k}}_{q_1^{-1} p_1^{-\ell}}{\bf 1}^{q_0q_2^2 p_1^{-k}}_{q_0^{-1} p_1^{-\ell}}
{\bf 2}^{q_3 p_1^{-\ell}}_{q_1^{-2}q_3^{-1} p_1^{-k}}{\bf 3}^{q_2 p_1^{-\ell}}_{q_2 p_1^{-k}}\,.
\end{aligned}
\end{equation}

Then the sum $\hat\chi^{312}=\sum_{a\geq b} V_{a,b}^{312}$ is a linear $qq$-character which we call the vector $qq$-character of $\hat {\textrm D}(2,1;\alpha)$ type. The graph of the vector $qq$-character is pictured in Figure \ref{D hat vector pic}.
\begin{figure} [H]
\begin{center}
\begin{tikzpicture}[scale=0.65]

\node at  (8,6)  {$\dots$};
\node at  (8,-6)  {$\dots$};
\node at  (-6,-6)  {$\dots$};

\node at  (4,4)  {\small $V^{312}_{1,1}$};
\node at  (8,4)  {\small $V^{312}_{2,1}$};
\node at  (12,4)  {\small $V^{312}_{3,1}$};
\node at  (16,4)  {\small $\dots$};

\node at  (0,0)  {\small $V^{312}_{0,0}$};
\node at  (4,0)  {\small $V^{312}_{1,0}$};
\node at  (8,0)  {\small $V^{312}_{2,0}$};
\node at  (12,0)  {\small $V^{312}_{3,0}$};
\node at  (16,0)  {\small $\dots$};
\node at  (-4,-4) {\small $V^{312}_{-1,-1}$};
\node at  (0,-4)  {\small $V^{312}_{0,-1}$};
\node at  (4,-4)  {\small $V^{312}_{1,-1}$};
\node at  (8,-4)  {\small $V^{312}_{2,-1}$};
\node at  (12,-4)  {\small $V^{312}_{3,-1}$};
\node at  (16,-4)  {\small $\dots$};

\draw[->]  (0,-3.5)--node[left]{{\small $A_{0,q_0q_1}^{-1}$}}(0,-0.5);
\draw[->]  (4,-3.5)--node[left]{{\small $A_{0,q_0q_1}^{-1}$}}(4,-0.5);
\draw[->]  (8,-3.5)--node[left]{{\small $A_{0,q_0q_1}^{-1}$}}(8,-0.5);
\draw[->]  (12,-3.5)--node[left]{{\small $A_{0,q_0q_1}^{-1}$}}(12,-0.5);

\draw[blue,->]  (4,0.5)--node[left]{{\small $A_{1,1}^{-1}$}}(4,3.5);
\draw[blue,->]  (8,0.5)--node[left]{{\small $A_{1,1}^{-1}$}}(8,3.5);
\draw[blue,->]  (12,0.5)--node[left]{{\small $A_{1,1}^{-1}$}}(12,3.5);

\draw[red,->] (4.7,4)-- node[above]{{\small $A_{2,q_2q_1^{-1}}^{-1}$}}(7.3,4);
\draw[green,->]  (8.7,4)--node[above]{{\small $A_{3,q_0q_2p_1^{-1}}^{-1}$}}(11.3,4);

\draw[green,->] (0.7,0)-- node[above]{{\small $A_{3,q_0q_2}^{-1}$}}(3.3,0);
\draw[red,->] (4.7,0)-- node[above]{{\small $A_{2,q_2q_1^{-1}}^{-1}$}}(7.3,0);
\draw[green,->]  (8.7,0)--node[above]{{\small $A_{3,q_0q_2p_1^{-1}}^{-1}$}} (11.3,0);

\draw[red,->] (-3,-4)--node[below]{{\small $A_{2,q_0^2q_1q_2}^{-1}$}} (-0.7,-4);
\draw[green,->] (0.7,-4)-- node[below]{{\small $A_{3,q_0q_2}^{-1}$}}(3.2,-4);
\draw[red,->] (4.7,-4)-- node[below]{{\small $A_{2,q_2q_1^{-1}}^{-1}$}}(7.2,-4);
\draw[green,->]  (8.7,-4)--node[below]{{\small $A_{3,q_0q_2p_1^{-1}}^{-1}$}}(11.3,-4);
\draw[red,->] (12.7,4)--(14.7,4);
\draw[red,->] (12.7,0)--(14.7,0);
\draw[red,->] (12.7,-4)--(14.7,-4);
\end{tikzpicture}
\caption{The vector $\hat {\textrm D}(2,1;\alpha)$ character.}\label{D hat vector pic}
\end{center}
\end{figure}
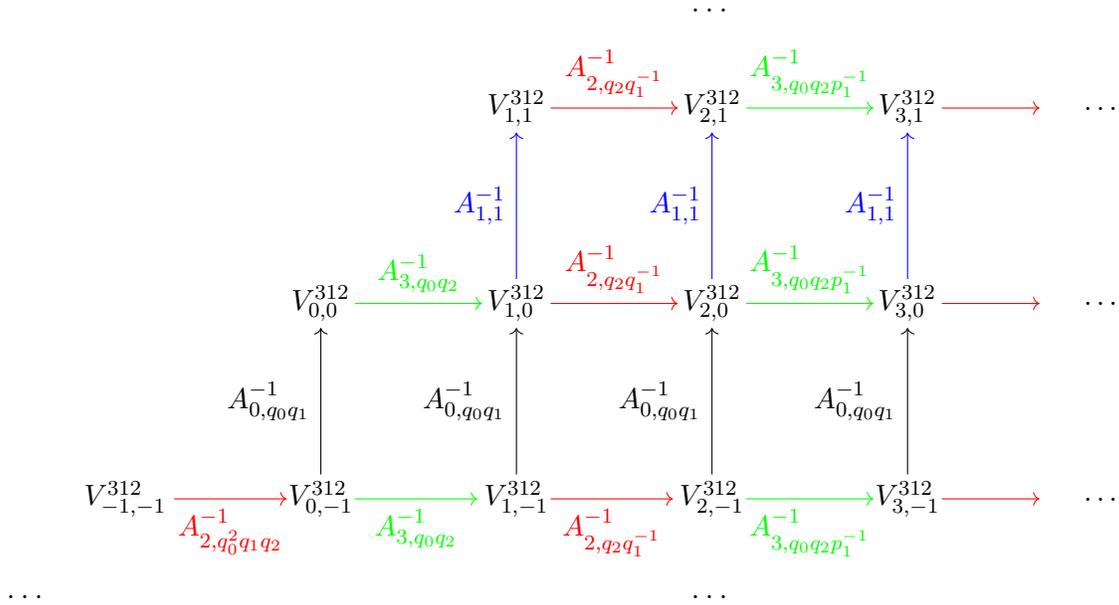

The restrictions $\rho_{\{1,2,3\}}\hat \chi^{312}$ and $\rho_{\{0,2,3\}}\hat \chi^{312}$ are sums of shifts of the vector $qq$-characters of D$(2,1;\alpha)$ type. 

The restrictions $\rho_{\{0,1,3\}}\hat \chi^{312}$ and $\rho_{\{0,1,2\}}\hat \chi^{312}$ are the sums of shifts of lowest weight analogs of vector $qq$-characters of D$(2,1;\alpha)$ type produced from an anti-dominant monomial.

Using the symmetries we obtain linear $qq$-characters $\hat \chi^{a,b,c}$, where $a,b,c$ are distinct elements of $\hat I=\{0,1,2,3\}$. Among those we have coincidences, for example, $\hat \chi^{312}$ is a shift of $\hat \chi^{203}$. In total we have 12 distinct vector $qq$-characters of $\hat {\textrm D}(2,1;\alpha)$ type.

\medskip

We describe a different construction of the vector representations of $\hat{\textrm D}(2,1;\alpha)$ type.
Define monomials of degree $(-1,-1,1,1)$
\begin{align}\label{R}
R_1^\pm=\bs 0^{q_3^{\pm 1}}\bs 1^{q_2^{\pm 1}}\bs 2_{ q_1^{\mp 1}}\bs 3_{ q_0^{\mp 1}} 
\end{align}
and monomials of degree $(1,1,-1,-1)$
\begin{align}\label{T}
T_1^{\pm}=\bs 0_{q_1^{\mp 1}}\bs 1_{q_0^{\mp1}}\bs 2^{q_3^{\pm 1}}\bs 3^{q_2^{\pm 1}}. 
\end{align}
As always, we set $R^\pm_{1,\sigma}=\tau_\sigma R_1^{\pm 1}$ and $T^\pm_{1,\sigma}=\tau_\sigma T_1^{\pm 1}$. 

Then it is easy to check that $\chi_R=\sum_{i\in\Z} (R_{1,p^i}^+ + R^-_{1,p^i})$ and 
$\chi_T=\sum_{i\in\Z} (T_{1,p^i}^+ + T^-_{1,p^i})$ are $qq$-characters of degrees $(-1,-1,1,1)$ and $(1,1,-1,-1)$ respectively. 

The restrictions $\rho_{\{2,3\}}\chi_R$ and  $\rho_{\{0,1\}}\chi_T$ are simply line $qq$-characters of type $\gl_{2,1}$.

It follows that a generic product $\tau_\kappa(\chi_R) \chi_T$ is a linear $qq$-character of degree 0. And for special monomials $\kappa$, the product truncates. The $qq$-character $\chi^{312}$ is a truncation of  $\tau_{q_0q_2}(\chi_R) \chi_T$.

\section{The 18, 66, 130.}\label{sec:18,66,130}
In this section we study finite degree zero tame $qq$-characters of type D$(2,1;\alpha)$ and their restrictions to $\gl_{2,1}$.
In particular our Cartan matrix is \eqref{D  Cartan} and the affine roots are given by \eqref{D affine roots}.

\subsection{18}
The smallest possible nontrivial degree zero $qq$-character of type $ D(2,1;\alpha)$ turns out to have 18 terms. It is constructed from the dominant  monomial 
$m_+^{18}={\bf 1}^{q_1}_{q_1^{-1}} {\bf 2}^{q_2}_{q_2^{-1}} {\bf 3}^{q_3}_{q_3^{-1}}$.
In fact $m_+^{18}=\rho^{\{1,2,3\}}A_0^{-1}$, where $A_0$ is the affine root of type $\hat {\textrm D}(2,1;\alpha)$.

The $qq$-character $\chi^{18}=\sum_{i=1}^{18}v_i$ turns out to be slim but not linear. We call it the adjoint $qq$-character of D$(2,1;\alpha)$ type.

The monomials $v_i$ and the graph are shown in Figure \ref{18 pic}, where we set
\begin{align*}
A^{-1}_{i,\pm}=A^{-1}_{i,q_0^{-1}q_i^{\pm 1}}\,.
\end{align*}

\begin{figure} [ht]
\begin{center}
\begin{tikzpicture}[scale=0.60]
\coordinate  (+++) at  (0,8);

\coordinate  (002) at (-10,4);
\coordinate  (020) at (0,4);
\coordinate  (200) at (10,4);

\coordinate  (-++) at (-10,0);
\coordinate  (+-+) at (0,0);
\coordinate  (++-) at (10,0);

\coordinate  (L) at (-10,-6);
\coordinate  (C) at (0,-6);
\coordinate  (CC) at (2,-4);
\coordinate  (R) at (10,-6);

\coordinate  (+--) at (-10,-10);
\coordinate  (-+-) at (0,-10);
\coordinate  (--+) at (10,-10);

\coordinate  (00-2) at (-10,-14);
\coordinate  (0-20) at (0,-14);
\coordinate  (-200) at (10,-14);

\coordinate  (---) at  (0,-18);

\node at (0,10) {${  }$};

\node at  (+++) { 
$v_1={\bf 1}^{q_1}_{q_1^{-1}} {\bf 2}^{q_2}_{q_2^{-1}} {\bf 3}^{q_3}_{q_3^{-1}}$};

\node at  (002) 
{$v_2={\bf 1}^{q_1}_{q_0^{-2}q_1^{-1}}
{\bf 2}^{q_2q_3^2}_{q_2^{-1}} {\bf 3}^{q_2^2q_3}_{q_3^{-1}}$};

\node at  (020) 
{$v_3= {\bf 1}^{q_1q_3^2}_{q_1^{-1}}
{\bf 2}^{q_2}_{q_0^{-2}q_2^{-1}} {\bf 3}^{q_1^2q_3}_{q_3^{-1}}$};

\node at  (200)
{$ v_4={\bf 1}^{q_1q_2^2}_{q_1^{-1}}
{\bf 2}^{q_1^2q_2}_{q_2^{-1}} {\bf 3}^{q_3}_{q_0^{-2}q_3^{-1}}$};

\node at  (-++) 
{$v_5=
{\bf 1}^{q_1q_3^2}_{q_0^{-2}q_1^{-1}}
{\bf 2}^{q_2q_3^2}_{q_0^{-2}q_2^{-1}} 
{\bf 3}^{q_1^2q_3,q_2^2q_3}_{q_3^{-1},q_3}$};

\node at  (+-+) 
{$v_6={\bf 1}^{q_1q_2^2}_{q_0^{-2}q_1^{-1}}
{\bf 2}^{q_2q_1^2,q_2q_3^2}_{q_2^{-1},q_2} 
{\bf 3}^{q_2^2q_3}_{q_0^{-2}q_3^{-1}}$};

\node at  (++-) 
{$v_7=
{\bf 1}^{q_1q_2^2,q_1q_3^2}_{q_1^{-1},q_1} 
{\bf 2}^{q_1^2q_2}_{q_0^{-2}q_2^{-1}}
{\bf 3}^{q_1^2q_3}_{q_0^{-2}q_3^{-1}}$};

\node at (L) 
{$
v_8={\bf 3}^{q_1^2q_3,q_2^2q_3}_{q_0^{-2}q_3,q_3^{-1}}$};

\node at (C) 
{$
v_9={\bf 2}^{q_1^2q_2,q_3^2q_2}_{q_0^{-2}q_2,q_2^{-1}}$};

\node at (CC) 
{$
v_{11}={\bf 1}^{q_1q_2^2, q_1q_3^2}_{q_0^{-2}q_1^{-1},q_1}
{\bf 2}^{q_2q_1^2, q_2q_3^2}_{q_0^{-2}q_2^{-1},q_2}
{\bf 3}^{q_3q_1^2, q_3q_2^2}_{q_0^{-2}q_3^{-1},q_3}
$};

\node at (R) 
{$v_{10}={\bf 1}^{q_2^2q_1,q_3^2q_1}_{q_0^{-2}q_1,q_1^{-1}}$};

\node at  (+--) 
{$v_{12}=
{\bf 1}^{q_1q_2^2}_{q_1}
{\bf 2}^{q_2q_1^2}_{q_2} 
{\bf 3}^{q_1^2q_3,q_2^2q_3}_{q_0^{-2}q_3^{-1},q_0^{-2}q_3}$};

\node at  (-+-) 
{$v_{13}={\bf 1}^{q_1q_3^2}_{q_1}
{\bf 2}^{q_1^2q_2,q_3^2q_2}_{q_0^{-2}q_2,q_0^{-2}q_2}
{\bf 3}^{q_3q_1^2}_{q_3} $};

\node at (--+) 
{$v_{14}=
{\bf 1}^{q_2^2q_1,q_3^2q_1}_{q_0^{-2}q_1^{-1},q_0^{-2}q_1}
{\bf 2}^{q_2q_3^2}_{q_2}
{\bf 3}^{q_3q_2^2}_{q_3} $};

\node at  (00-2) 
{$v_{15}=
{\bf 1}^{q_0^{-2}q_1^{-1}}_{q_1}
{\bf 2}^{q_2q_1^2}_{q_0^{-2}q_2} 
{\bf 3}^{q_1^2q_3}_{q_0^{-2}q_3}$};

\node at  (0-20) 
{$v_{16}={\bf 1}^{q_1 q_2^2}_{q_0^{-2}q_1}
{\bf 2}^{q_0^{-2}q_2^{-1}}_{q_2} 
{\bf 3}^{q_2^2q_3}_{q_0^{-2}q_3}$};

\node at  (-200) 
{$v_{17}=
{\bf 1}^{q_1 q_3^2}_{q_0^{-2}q_1}
{\bf 2}^{q_3^2 q_2}_{q_0^{-2}q_2} 
{\bf 3}^{q_0^{-2}q_3^{-1}}_{q_3}$};

\node at  (---) 
{$v_{18}=
{\bf 1}^{q_0^{-2} q_1^{-1}}_{q_0^{-2} q_1}
{\bf 2}^{q_0^{-2} q_2^{-1}}_{q_0^{-2} q_2}
{\bf 3}^{q_0^{-2} q_3^{-1}}_{q_0^{-2} q_3}
$};

\draw[green, ->] (0.5,7.5)--node[above]{$A^{-1}_{3,-}$} (9.5,4.7);
\draw[red, ->] (0,7.5)-- node[right]{$A^{-1}_{2,-}$}(0,4.7);
\draw[blue, ->] (-0.5,7.5)--node[above]{$A^{-1}_{1,-}$} (-9.5,4.7);

\draw[red, ->] (-10,3.3)--node[left]{$A^{-1}_{2,-}$}(-10,0.8);
\draw[green, ->] (-9.5,3.3)--node[above]{$A^{-1}_{3,-}$}(-0.25,0.8);
\draw[green,->] (0.25,3.3)--node[below]{$A^{-1}_{3,-}$}(9.5,0.8);
\draw[blue,->] (-0.25,3.3)--node[below]{$A^{-1}_{1,-}$}(-9.5,0.8);
\draw[blue, ->] (9.5,3.3)--node[above]{$A^{-1}_{1,-}$}(0.25,0.8);
\draw[red, ->] (10,3.3)--node[right]{$A^{-1}_{2,-}$}(10,0.8);

\draw[green, ->] (-10,-0.7)--node[left]{$A^{-1}_{3,+}$} (-10,-5.3);
\draw[red, ->] (0,-0.7)--node[above]{$A^{-1}_{2,+}\hspace{22pt}$} (0,-5.3);
\draw[blue, ->] (10,-0.7)--node[right]{$A^{-1}_{1,+}$}(10,-5.3);
\draw[green, ->] (-9.5,-0.7)--node[below]{$A^{-1}_{3,-}$}(2.0,-3.3);
\draw[red, ->] (0.5,-0.7)--node[right]{$A^{-1}_{2,-}$}(2.2,-3.3);
\draw[blue, ->] (9.5,-0.7)--node[below]{$A^{-1}_{1,-}$}(2.4,-3.3);

\draw[green, ->] (2.0,-4.7)--node[above]{$A^{-1}_{3,+}$} (-9.5,-9.3);
\draw[red, ->] (2.2,-4.7)--node[right]{$A^{-1}_{2,+}$} (0.5,-9.3);
\draw[blue, ->] (2.4,-4.7)--node[above]{$A^{-1}_{1,+}$} (9.5,-9.3);
\draw[green, ->] (-10,-6.7)--node[left]{$A^{-1}_{3,-}$} (-10,-9.3);
\draw[red, ->] (0,-6.7)--node[left]{$A^{-1}_{2,-}$} (0,-9.3);
\draw[blue,->] (10,-6.7)--node[right]{$A^{-1}_{1,-}$} (10,-9.3);

\draw[red, ->] (-10,-10.7)--node[left]{$A^{-1}_{2,+}$}(-10,-13.3);
\draw[blue, ->] (-9.5,-10.7)--node[above]{$A^{-1}_{1,+}$}(-0.25,-13.3);
\draw[green, ->] (-0.25,-10.7)--node[below]{$A^{-1}_{3,+}$}(-9.5,-13.3);
\draw[blue, ->] (0.25,-10.7)--node[above]{$A^{-1}_{1,+}$}(9.5,-13.3);
\draw[green, ->] (9.5,-10.7)--node[below]{$A^{-1}_{3,+}$}(0.25,-13.3);
\draw[red, ->] (10,-10.7)--node[right]{$A^{-1}_{2,+}$}(10,-13.3);

\draw[blue,->] (-9.5,-14.7)--node[below]{$A^{-1}_{1,+}$}(-0.5,-17.3);
\draw[red,->] (0,-14.7)--node[right]{$A^{-1}_{2,+}$}(0,-17.3);
\draw[green, ->] (9.5,-14.7)--node[below]{$A^{-1}_{3,+}$}(0.5,-17.3);

\end{tikzpicture}
\caption{The adjoint D$(2,1;\alpha)$  
$qq$-character. \label{18 pic}}
\bigskip
\bigskip
\end{center}
\end{figure}
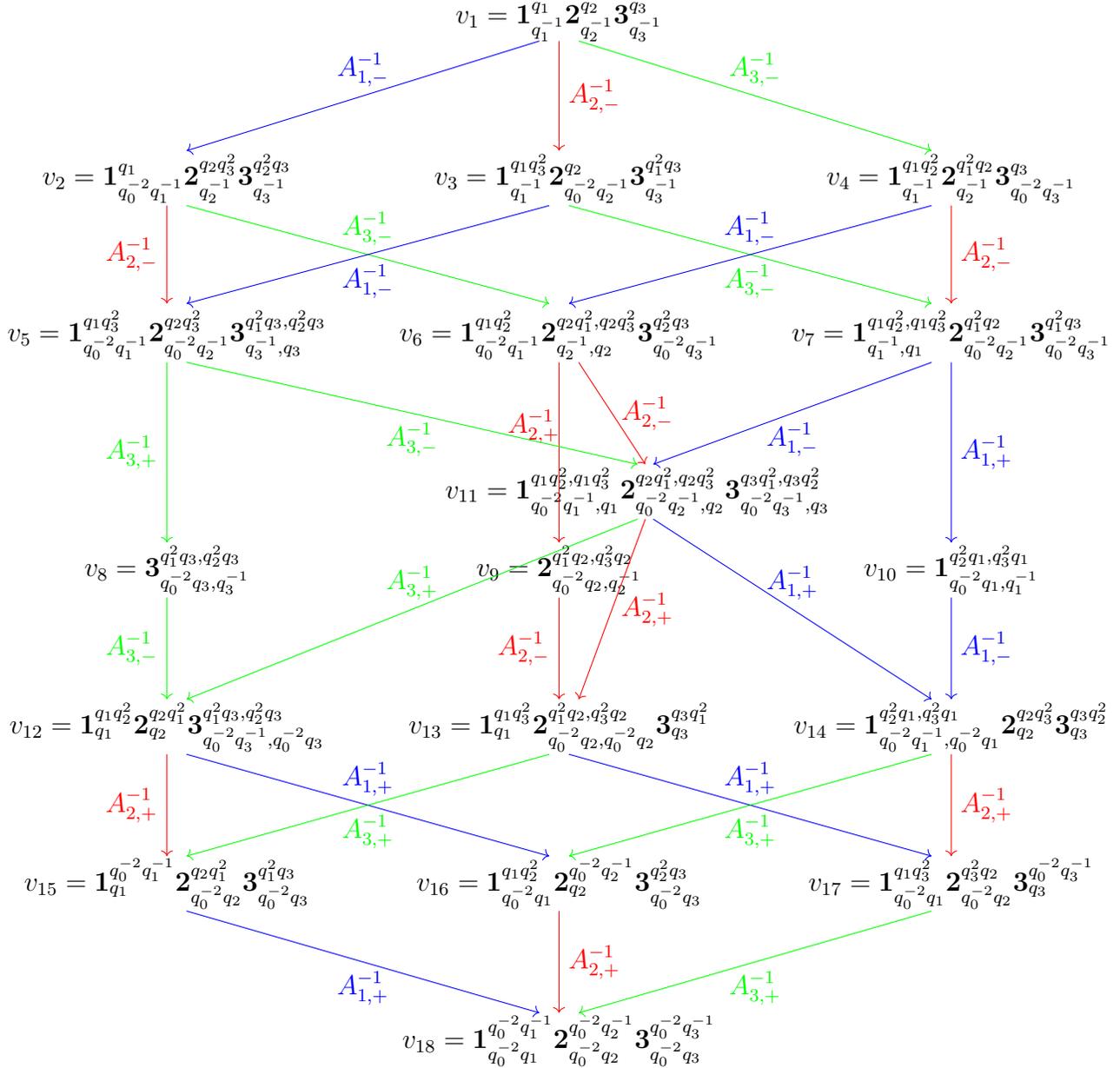

The graph has a symmetry with respect to changes of colors and variables.  In our picture  the symmetry which exchanges colors $1$ and $3$  corresponds to the reflection about the vertical line through vertices $v_1,v_3,v_6,v_{11},v_9,v_{13},v_{16}$ and $v_{18}$. Here and everywhere we do not picture the symmetry line.

It is also symmetric with respect to the middle horizontal line connecting vertices $v_8,v_9,v_{10}$ and $v_{11}$ (with colors preserved and arrows reversed). 

By the depth counting, the adjoint $qq$-character has the form $18=1+3+3+4+3+3+1$.

\medskip

We note that the restriction $\rho^{\{2,3\}}\chi^{18}$ decomposes by the rule
$$
18=9+4+4+1.
$$
Here the $1$ is a trivial (polynomial) module, the two fours are squares and the $9$ is a generic product of the vector and covector $\gl_{2,1}$ $qq$-characters.

\subsection{66}
The next smallest simple finite degree zero $qq$-character of D$(2,1;\alpha)$ type has $66$ monomials. Up to a shift there are three of such $qq$-characters obtained by permutations of colors. We describe the one with the dominant monomial
$$
m_+^{66,1}=m^{18}_+ \tau_{p_1} (v_2)=\bs 1_{q_1^{-1}}^{q_1p_1} \bs 2^{q_2}_{q_2^{-1}p_1}\bs 3^{q_3}_{q_3^{-1}p_1},
$$
where color $1$ is chosen and the symmetry between colors $2$ and $3$ is preserved.

Starting from this monomial one produces a slim simple (but not linear) $qq$-character $\chi^{66,1}$ with $66$ terms.

\begin{figure} [htbp]
\begin{center}
\begin{tikzpicture}[scale=0.38]
\coordinate  (A) at  (0,8);
\node [above] at  (A) { $(1,2)$};
\node at  (A) { $\bullet$};

\coordinate  (B1) at (-4,5);
\coordinate  (B2) at (0,5);
\coordinate  (B3) at (4,5);
\node [left] at  (B1) { $(1,5)$};
\node at  (B1) { $\bullet$};
\node [above] at  (B2) { $(2,2)$};
\node at  (B2) { $\bullet$}; 
\node [right] at  (B3) {$(1,6)$};
\node at  (B3) { $\bullet$};

\coordinate  (C1) at (-10,2);
\coordinate  (C2) at (-4,2);
\coordinate  (C3) at (0,2);
\coordinate  (C4) at (4,2);
\coordinate  (C5) at (10,2);
\node [left] at  (C1) { $(1,8)$};
\node at  (C1) { $\bullet$};
\node [left] at  (C2) { $(2,5)$};
\node at  (C2) { $\bullet$};
\node [above] at  (C3) { $(1,11)$};
\node at  (C3) { $\bullet$};
\node [right] at  (C4) { $(2,6)$};
\node at  (C4) { $\bullet$};
\node [right] at  (C5) { $(1,9)$};
\node at  (C5) { $\bullet$};

\coordinate  (D1) at (-16,-3);
\coordinate  (D2) at (-10,-3);
\coordinate  (D3) at (-8,-3);
\coordinate  (D4) at (-1,-3);
\coordinate  (D5) at (1,-3);
\coordinate  (D6) at (8,-3);
\coordinate  (D7) at (10,-3);
\coordinate  (D8) at (16,-3);
\node [left] at  (D1) { $(3,8)$};
\node at  (D1) { $\bullet$};
\node [left] at  (D2) { $(2,8)$};
\node at  (D2) { $\bullet$};
\node [right] at  (D3) { $(1,12)$};
\node at  (D3) { $\bullet$};
\node [left] at  (D4) { $(2,11)$};
\node at  (D4) { $\bullet$};
\node [right] at  (D5) { $(1,14)$};
\node at  (D5) { $\bullet$};
\node [left] at  (D6) { $(1,13)$};
\node at  (D6) { $\bullet$};
\node [right] at  (D7) { $(2,9)$};
\node at  (D7) { $\bullet$};
\node [right] at  (D8) { $(4,9)$};
\node at  (D8) { $\bullet$};

\coordinate  (E1) at (-16,-9);
\coordinate  (E2) at (-14,-9);
\coordinate  (E3) at (-8,-9);
\coordinate  (E4) at (-6,-9);
\coordinate  (E5) at (0,-8.5);
\coordinate  (E6) at (0,-9.5);
\coordinate  (E7) at (8,-9);
\coordinate  (E8) at (6,-9);
\coordinate  (E9) at (14,-9);
\coordinate  (E10) at (16,-9);
\node [left] at  (E1) { $(5,8)$};
\node at  (E1) { $\bullet$};
\node [above] at  (E2) { $(3,12)$};
\node at  (E2) { $\bullet$};
\node [left] at  (E3) { $(2,12)$};
\node at  (E3) { $\bullet$};
\node [right] at  (E4) { $(1,16)$};
\node at  (E4) { $\bullet$};
\node [above] at  (E5) { $(2,14)$};
\node at  (E5) { $\bullet$};
\node [below] at  (E6) { $(1,15)$};
\node at  (E6) { $\bullet$};
\node [right] at  (E7) { $(1,17)$};
\node at  (E7) { $\bullet$};
\node [left] at  (E8) { $(2,13)$};
\node at  (E8) { $\bullet$};
\node [above] at  (E9) { $(4,13)$};
\node at  (E9) { $\bullet$};
\node [right] at  (E10) { $(6,9)$};
\node at  (E10) { $\bullet$};

\coordinate  (F1) at (-18,-17);
\coordinate  (F2) at (-15.0,-17);
\coordinate  (F3) at (-13.0,-17);
\coordinate  (F5) at (-7.0,-17.5);
\coordinate  (F4) at (-7.0,-16.5);
\coordinate  (F6) at (1,-17);
\coordinate  (F7) at (-1,-17);
\coordinate  (F8) at (7,-16.5);
\coordinate  (F9) at (7,-17.5);
\coordinate  (F10) at (15.0,-17);
\coordinate  (F11) at (13.0,-17);
\coordinate  (F12) at (18,-17);
\node [left] at  (F1) { $(8,8)$};
\node at  (F1) { $\bullet$};
\node [above] at  (F2) { $(5,12)$};
\node at  (F2) { $\bullet$};
\node [below] at  (F3) { $(3,16)$};
\node at  (F3) { $\bullet$};
\node [right] at  (F4) { $(2,16)$};
\node at  (F4) { $\bullet$};
\node [left] at  (F5) { $(3,15)$};
\node at  (F5) { $\bullet$};
\node [below] at  (F6) { $(1,18)$};
\node at  (F6) { $\bullet$};
\node [above] at  (F7) { $(2,15)$};
\node at  (F7) { $\bullet$};
\node [left] at  (F8) { $(2,17)$};
\node at  (F8) { $\bullet$};
\node [right] at  (F9) { $(4,15)$};
\node at  (F9) { $\bullet$};
\node [below] at  (F10) { $(4,17)$};
\node at  (F10) { $\bullet$};
\node [above] at  (F11) { $(6,13)$};
\node at  (F11) { $\bullet$};
\node [right] at  (F12) { $(9,9)$};
\node at  (F12) { $\bullet$};

\coordinate  (Aa) at  (0,-42);
\node [below] at  (Aa) { $(15,18)$};
\node at  (Aa) { $\bullet$};

\coordinate  (B1a) at (-4,-39);
\coordinate  (B2a) at (0,-39);
\coordinate  (B3a) at (4,-39);
\node [left] at  (B1a) { $(12,18)$};
\node at  (B1a) { $\bullet$};
\node [below] at  (B2a) { $(15,15)$};
\node at  (B2a) { $\bullet$}; 
\node [right] at  (B3a) {$(13,18)$};
\node at  (B3a) { $\bullet$};

\coordinate  (C1a) at (-10,-36);
\coordinate  (C2a) at (-4,-36);
\coordinate  (C3a) at (0,-36);
\coordinate  (C4a) at (4,-36);
\coordinate  (C5a) at (10,-36);
\node [left] at  (C1a) { $(8,18)$};
\node at  (C1a) { $\bullet$};
\node [left] at  (C2a) { $(12,15)$};
\node at  (C2a) { $\bullet$};
\node [below] at  (C3a) { $(11,18)$};
\node at  (C3a) { $\bullet$};
\node [right] at  (C4a) { $(13,15)$};
\node at  (C4a) { $\bullet$};
\node [right] at  (C5a) { $(9,18)$};
\node at  (C5a) { $\bullet$};

\coordinate  (D1a) at (-16,-31);
\coordinate  (D2a) at (-10,-31);
\coordinate  (D3a) at (-8,-31);
\coordinate  (D4a) at (-1.0,-31);
\coordinate  (D5a) at (1.0,-31);
\coordinate  (D6a) at (8,-31);
\coordinate  (D7a) at (10,-31);
\coordinate  (D8a) at (16,-31);
\node [left] at  (D1a) { $(8,16)$};
\node at  (D1a) { $\bullet$};
\node [left] at  (D2a) { $(8,15)$};
\node at  (D2a) { $\bullet$};
\node [right] at  (D3a) { $(5,18)$};
\node at  (D3a) { $\bullet$};
\node [left] at  (D4a) { $(11,15)$};
\node at  (D4a) { $\bullet$};
\node [right] at  (D5a) { $(7,18)$};
\node at  (D5a) { $\bullet$};
\node [left] at  (D6a) { $(6,18)$};
\node at  (D6a) { $\bullet$};
\node [right] at  (D7a) { $(9,15)$};
\node at  (D7a) { $\bullet$};
\node [right] at  (D8a) { $(9,17)$};
\node at  (D8a) { $\bullet$};

\coordinate  (E1a) at (-16,-25);
\coordinate  (E2a) at (-14,-25);
\coordinate  (E3a) at (-8.5,-25);
\coordinate  (E4a) at (-6.5,-25);
\coordinate  (E5a) at (0,-24.5);
\coordinate  (E6a) at (0,-25.5);
\coordinate  (E7a) at (8.5,-25);
\coordinate  (E8a) at (6.5,-25);
\coordinate  (E9a) at (14,-25);
\coordinate  (E10a) at (16,-25);
\node [left] at  (E1a) { $(8,12)$};
\node at  (E1a) { $\bullet$};
\node [below] at  (E2a) { $(5,16)$};
\node at  (E2a) { $\bullet$};
\node [left] at  (E3a) { $(5,15)$};
\node at  (E3a) { $\bullet$};
\node [right] at  (E4a) { $(3,18)$};
\node at  (E4a) { $\bullet$};
\node [above] at  (E5a) { $(2,18)$};
\node at  (E5a) { $\bullet$};
\node [below] at  (E6a) { $(7,15)$};
\node at  (E6a) { $\bullet$};
\node [right] at  (E7a) { $(4,18)$};
\node at  (E7a) { $\bullet$};
\node [left] at  (E8a) { $(6,15)$};
\node at  (E8a) { $\bullet$};
\node [below] at  (E9a) { $(6,17)$};
\node at  (E9a) { $\bullet$};
\node [right] at  (E10a) { $(9,13)$};
\node at  (E10a) { $\bullet$};

\draw[red] (A)-- (B1);
\draw[blue] (A)-- (B2);
\draw[green] (A)--(B3);
\draw[red] (Aa)-- (B1a);
\draw[blue] (Aa)-- (B2a);
\draw[green] (Aa)--(B3a);

\draw[green] (B1)--(C1);
\draw[blue] (B1)--(C2);
\draw[green] (B1)--(C3);
\draw[red] (B2)--(C2);
\draw[green] (B2)--(C4);
\draw[red] (B3)--(C3);
\draw[blue] (B3)--(C4);
\draw[red] (B3)--(C5);

\draw[green] (B1a)--(C1a);
\draw[blue] (B1a)--(C2a);
\draw[green] (B1a)--(C3a);
\draw[red] (B2a)--(C2a);
\draw[green] (B2a)--(C4a);
\draw[red] (B3a)--(C3a);
\draw[blue] (B3a)--(C4a);
\draw[red] (B3a)--(C5a);

\draw[red] (C1) -- (D1);
\draw[blue] (C1) -- (D2);
\draw[green] (C1) -- (D3);
\draw[green] (C2) -- (D2);
\draw[green] (C2) -- (D4);
\draw[green] (C3) -- (D3);
\draw[blue] (C3) -- (D4);
\draw[blue] (C3) -- (D5);
\draw[red] (C3) -- (D6);
\draw[red] (C4) -- (D4);
\draw[red] (C4) -- (D7);
\draw[red] (C5) -- (D6);
\draw[blue] (C5) -- (D7);
\draw[green] (C5) -- (D8);

\draw[red] (C1a) -- (D1a);
\draw[blue] (C1a) -- (D2a);
\draw[green] (C1a) -- (D3a);
\draw[green] (C2a) -- (D2a);
\draw[green] (C2a) -- (D4a);
\draw[green] (C3a) -- (D3a);
\draw[blue] (C3a) -- (D4a);
\draw[blue] (C3a) -- (D5a);
\draw[red] (C3a) -- (D6a);
\draw[red] (C4a) -- (D4a);
\draw[red] (C4a) -- (D7a);
\draw[red] (C5a) -- (D6a);
\draw[blue] (C5a) -- (D7a);
\draw[green] (C5a) -- (D8a);

\draw[blue] (D1) -- (E1);
\draw[green] (D1) -- (E2);
\draw[red] (D2) -- (E1);
\draw[green] (D2) -- (E3);
\draw[red] (D3) -- (E2);
\draw[blue] (D3) -- (E3);
\draw[blue] (D3) -- (E4);
\draw[red] (D3) -- (E6);
\draw[green] (D4) -- (E3);
\draw[blue] (D4) -- (E5);
\draw[red] (D4) -- (E8);
\draw[green] (D5) -- (E4);
\draw[blue] (D5) -- (E5);
\draw[red] (D5) -- (E7);
\draw[green] (D6) -- (E6);
\draw[blue] (D6) -- (E7);
\draw[blue] (D6) -- (E8);
\draw[green] (D6) -- (E9);
\draw[red] (D7) -- (E8);
\draw[green] (D7) -- (E10);
\draw[red] (D8) -- (E9);
\draw[blue] (D8) -- (E10);

\draw[blue] (D1a) -- (E1a);
\draw[green] (D1a) -- (E2a);
\draw[red] (D2a) -- (E1a);
\draw[green] (D2a) -- (E3a);
\draw[red] (D3a) -- (E2a);
\draw[blue] (D3a) -- (E3a);
\draw[blue] (D3a) -- (E4a);
\draw[red] (D3a) -- (E5a);
\draw[green] (D4a) -- (E3a);
\draw[blue] (D4a) -- (E6a);
\draw[red] (D4a) -- (E8a);
\draw[green] (D5a) -- (E4a);
\draw[blue] (D5a) -- (E6a);
\draw[red] (D5a) -- (E7a);
\draw[green] (D6a) -- (E5a);
\draw[blue] (D6a) -- (E7a);
\draw[blue] (D6a) -- (E8a);
\draw[green] (D6a) -- (E9a);
\draw[red] (D7a) -- (E8a);
\draw[green] (D7a) -- (E10a);
\draw[red] (D8a) -- (E9a);
\draw[blue] (D8a) -- (E10a);

\draw[green] (E1) -- (F1);
\draw[green] (E1) -- (F2);
\draw[blue] (E2) -- (F2);
\draw[blue] (E2) -- (F3);
\draw[red] (E2) -- (F5);
\draw[red] (E3) -- (F2);
\draw[red] (E3) -- (F7);
\draw[blue] (E3) -- (F4);
\draw[red] (E4) -- (F3);
\draw[blue] (E4) -- (F4);
\draw[red] (E4) -- (F6);
\draw[green] (E5) -- (F4);
\draw[red] (E5) -- (F8);
\draw[red] (E6) -- (F5);
\draw[blue] (E6) -- (F7);
\draw[blue] (E6) -- (F6);
\draw[green] (E6) -- (F9);
\draw[green] (E7) -- (F6);
\draw[blue] (E7) -- (F8);
\draw[blue] (E8) -- (F8);
\draw[green] (E7) -- (F10);
\draw[green] (E8) -- (F7);
\draw[green] (E8) -- (F11);
\draw[green] (E9) -- (F9);
\draw[blue] (E9) -- (F10);
\draw[blue] (E9) -- (F11);
\draw[red] (E10) -- (F11);
\draw[red] (E10) -- (F12);

\draw[green] (E1a) -- (F1);
\draw[green] (E1a) -- (F2);
\draw[blue] (E2a) -- (F2);
\draw[blue] (E2a) -- (F3);
\draw[red] (E2a) -- (F4);
\draw[red] (E3a) -- (F2);
\draw[red] (E3a) -- (F7);
\draw[blue] (E3a) -- (F5);
\draw[red] (E4a) -- (F6);
\draw[blue] (E4a) -- (F5);
\draw[red] (E4a) -- (F3);
\draw[green] (E5a) -- (F8);
\draw[red] (E5a) -- (F4);
\draw[red] (E6a) -- (F9);
\draw[blue] (E5a) -- (F7);
\draw[blue] (E5a) -- (F6);
\draw[green] (E6a) -- (F5);
\draw[green] (E7a) -- (F6);
\draw[blue] (E7a) -- (F9);
\draw[blue] (E8a) -- (F9);
\draw[green] (E7a) -- (F10);
\draw[green] (E8a) -- (F7);
\draw[green] (E8a) -- (F11);
\draw[green] (E9a) -- (F8);
\draw[blue] (E9a) -- (F10);
\draw[blue] (E9a) -- (F11);
\draw[red] (E10a) -- (F11);
\draw[red] (E10a) -- (F12);

\end{tikzpicture}
\caption{The $66$ term $qq$-character of type ${D}(2,1;\alpha)$.} \label{66 pic}
\label{fig-66}
\end{center}
\end{figure}
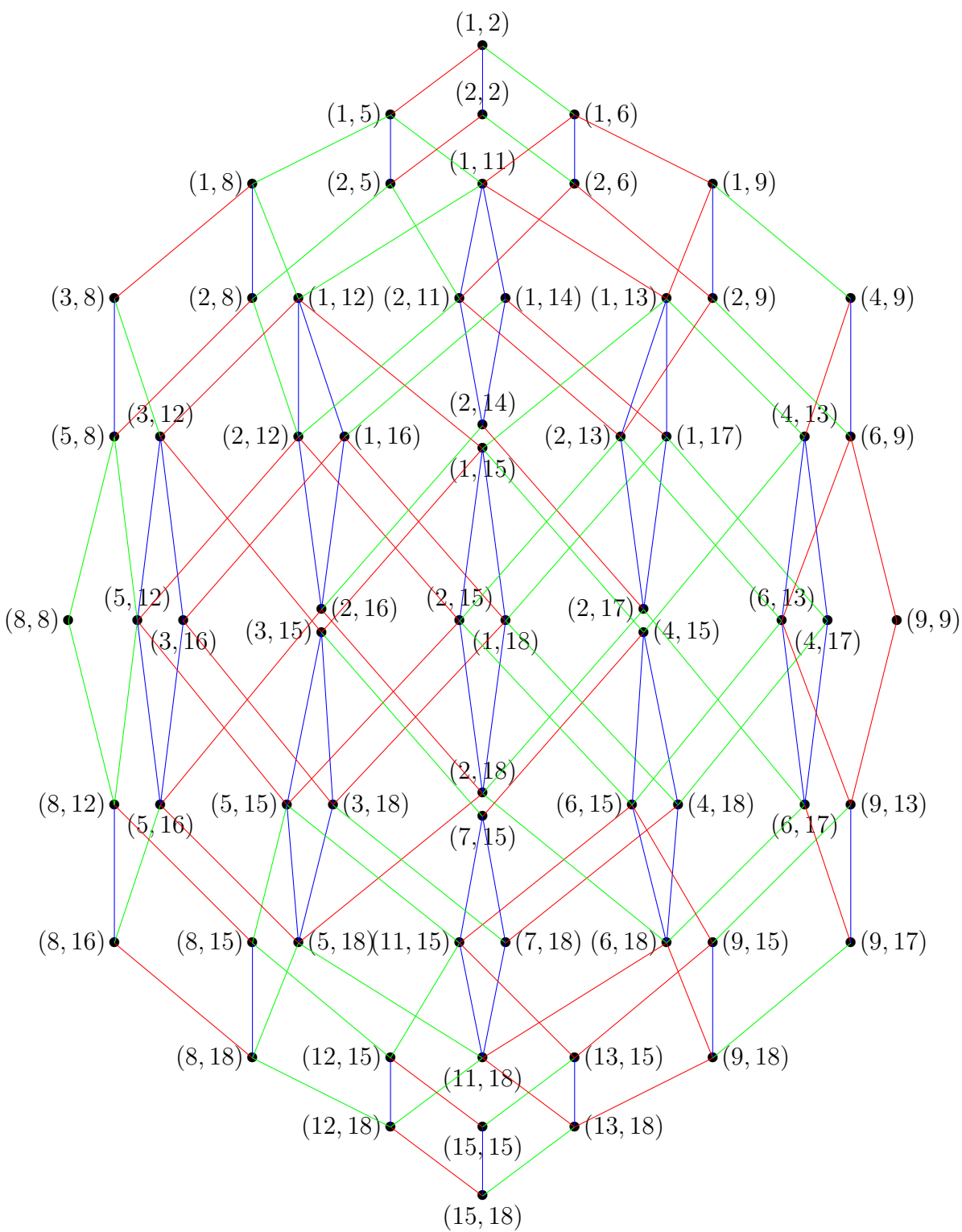
Alternatively, the character $\chi^{66,1}$ can be obtained as the combinatorial fusion:
$$
\chi^{66,1}=\chi^{18}*\tau_{p_1}\chi^{18}.
$$

We describe the monomials and the graph in Figure \ref{66 pic}. 
In this graph we use a notation $(i,j)=v_i\tau_{p_1}(v_j)$, where $v_i$ are monomials in the adjoint $qq$-character $\chi^{18}$, see Figure \ref{18 pic}. All arrows are directed downwards. The corresponding shifts of $A_i^{-1}$ are readily found from the picture of the adjoint $qq$-character as well.

The graph has a reflection symmetry across the vertical line connecting the nodes $(1,2)$ and $(15,18)$ which changes colors $2$ and $3$ and variables $q_2$ and $q_3$. To visualize it one has to imagine left vertices of the short diagonals of blue parallelograms to be below others and the right ones above.

The graph has also a reflection symmetry across the horizontal line connecting the nodes $(8,8)$ and $(9,9)$. This symmetry does not change colors but changes the orientation of edges.

By the depth counting, the 66 term $qq$-character has the decomposition $$66=1+3+5+8+10+12+10+8+5+3+1.$$

\medskip

The $\gl_{2,1}$ restriction $\rho^{\{2,3\}}\chi^{66,1}$ decomposes by the rule
$$
66=25+16+16+9,
$$
where both $16$ are prime squares while $25$ and $9$ are non-prime squares obtained by generic product of two segments.

The $\gl_{2,1}$ restrictions $\rho^{\{1,2\}}\chi^{66,1}$ and $\rho^{\{1,3\}}\chi^{66,1}$ decompose by the rule
$$
66 = 16+12+12+9+4+4+4+4+1.
$$
Here the 16 is a generic product of two squares, the twelves are products of a square by the vector $qq$-characters, the fours are squares.

\bigskip

\subsection{130}\label{130 sec}
In this section we discuss the $qq$-character with the dominant monomial
$$
m_+^{130}=m_+^{18} \ \tau_{q_0^{2}}(m_+^{18}) =\bs 1_{q_1,\, q^2 q_1}^{q_1^{-1},\,q^2 q_1^{-1}}\bs 2_{q_2,\, q^2 q_2}^{q_2^{-1},\,q^2 q_2^{-1}}\bs 3_{q_3,\, q^2 q_3}^{q_3^{-1},\,q^2 q_3^{-1}}.
$$

Then the algorithm produces a finite degree zero $qq$-character $\chi^{130}$ with 130 terms. 

Alternatively, the character $\chi^{130}$ can be obtained as the combinatorial fusion:
$$
\chi^{130,1}=\chi^{18}*\tau_{q_0^2}\chi^{18}.
$$

The $qq$-character $\chi^{130}$  is clearly not slim and not linear. It does have the symmetry with respect to permutations of colors together with the variables.

The graph of that $qq$-character is too large to picture.
By the depth counting it decomposes as
$$
130=1+3+6+10+15+18+24+18+15+10+6+3+1.
$$
Instead we describe the restriction $\rho_{\{2,3\}}\chi^{130}$.
As a $\gl_{2,1}$ $qq$-character we have the decomposition
\begin{align*}
130&=2(1^2+2^2+3^2+5^2)+4^2+6^2
\end{align*}

In this section we abbreviate $(i,j)=v_i  \tau_{q^{2}}(v_j)$, where $v_i$ are monomials from the adjoint 18-term $qq$-character. Also, all arrows are oriented downwards. 

The $6^2$ term  $qq$-character is given in Figure \ref{36 pic}.
Note that the graph has a symmetry interchanging colors $2$ and $3$ which corresponds to the reflection about the vertical line through nodes $(2,2)$ and $(15,15)$. The graph has also a symmetry about the horizontal line connecting $(8,8)$ and $(9,9)$ (the vertices $(15,2)$ and $(11,11)$ should be placed to the line and vertices $(12,6)$ and $(13,5)$ should be moved off the line to be images of each other).

\begin{figure} [t]
\begin{center}
\begin{tikzpicture}[scale=0.4]
\coordinate  (A) at  (0,12);

\coordinate  (B1) at (-2,9);
\coordinate  (B2) at (2,9);

\coordinate  (C2) at (-7.5,6);
\coordinate  (C1) at (-3.7,6);
\coordinate  (C3) at (0,6);

\coordinate  (C5) at (3.7,6);
\coordinate  (C4) at (7.5,6);

\coordinate  (D1) at (-10,3);

\coordinate  (D3) at (-6,3);
\coordinate  (D2) at (-2,3);

\coordinate  (D5) at (2,3);
\coordinate  (D4) at (6,3);
\coordinate  (D6) at (10,3);

\coordinate  (E1) at (-12,0);
\coordinate  (E2) at (-8,0);
\coordinate  (E3) at (-4,0);
\coordinate  (E5) at (0,-0.5);
\coordinate  (E6) at (4,0);
\coordinate  (E4) at (0,0.5);
\coordinate  (E7) at (8,0);
\coordinate  (E8) at (12,0);

\coordinate  (F1) at (-10,-3);
\coordinate  (F2) at (-6,-3);
\coordinate  (F3) at (-2,-3);
\coordinate  (F4) at (2,-3);
\coordinate  (F5) at (6,-3);
\coordinate  (F6) at (10,-3);

\coordinate  (G1) at (-7.5,-6);
\coordinate  (G2) at (-3.7,-6);
\coordinate  (G3) at (0,-6);
\coordinate  (G4) at (3.7,-6);
\coordinate  (G5) at (7.5,-6);

\coordinate  (H1) at (-2,-9);
\coordinate  (H2) at (2,-9);

\coordinate  (I) at (0,-12);

\node [above] at (A) {$(2,2)$};
\node at  (A) {$\bullet$};

\node [left] at  (B1) {$(5,2)$};
\node  at  (B1) {$\bullet$};
\node [above] at  (B2) {$(6,2)$};
\node at (B2) {$\bullet$};

\node [left]at  (C1) {$(8,2)$};
\node at  (C1) {$\bullet$};
\node  [above] at (C2) {$(5,5)$};
\node at  (C2) {$\bullet$};
\node [below] at  (C3) {$(11,2)$};
\node at  (C3) {$\bullet$};
\node [above] at  (C4) {$(6,6)$};
\node at  (C4) {$\bullet$};
\node [above] at  (C5) {$(9,2)$};
\node at  (C5) {$\bullet$};

\node [left] at  (D1) {$(8,5)$};
\node at  (D1) {$\bullet$};
\node [above] at  (D2) {$(12,2)$};
\node at  (D2) {$\bullet$};
\node [below] at  (D3) {$(11,5)$};
\node at  (D3) {$\bullet$};
\node [right] at (D4) {$(11,6)$};
\node at  (D4) {$\bullet$};
\node [right] at (D5) {$(13,2)$};
\node at  (D5) {$\bullet$};
\node [right] at (D6) {$(9,6)$};
\node at  (D6) {$\bullet$};

\node [left] at  (E1) {$(8,8)$};
\node at  (E1) {$\bullet$};
\node [below] at  (E2) {$(12,5)$};
\node at  (E2) {$\bullet$};
\node [below] at  (E3) {$(12,6)$};
\node at  (E3) {$\bullet$};
\node [left] at (E4) {$(15,2)$};
\node at  (E4) {$\bullet$};
\node [right] at (E5) {$(11,11)$};
\node at  (E5) {$\bullet$};
\node [right] at (E6) {$(13,5)$};
\node at  (E6) {$\bullet$};
\node [right] at (E7) {$(13,6)$};
\node at  (E7) {$\bullet$};
\node [right] at (E8) {$(9,9)$};
\node at  (E8) {$\bullet$};

\node [left] at  (F1) {$(12,8)$};
\node at  (F1) {$\bullet$};
\node [below] at  (F2) {$(12,11)$};
\node at  (F2) {$\bullet$};
\node [below] at  (F3) {$(15,5)$};
\node at  (F3) {$\bullet$};
\node [right] at (F4) {$(15,6)$};
\node at  (F4) {$\bullet$};
\node [right] at (F5) {$(13,11)$};
\node at  (F5) {$\bullet$};
\node [right] at (F6) {$(13,9)$};
\node at  (F6) {$\bullet$};

\node [left]at  (G1) {$(12,12)$};
\node at  (G1) {$\bullet$};
\node  [above] at (G2) {$(15,8)$};
\node at  (G2) {$\bullet$};
\node [below] at  (G3) {$(15,11)$};
\node at  (G3) {$\bullet$};
\node [above] at  (G4) {$(15,9)$};
\node at  (G4) {$\bullet$};
\node [above] at  (G5) {$(13,13)$};
\node at  (G5) {$\bullet$};

\node [below] at  (H1) {$(15,12)$};
\node at  (H1) {$\bullet$};
\node [below] at  (H2) {$(15,13)$};
\node at  (H2) {$\bullet$};

\node [below] at  (I) {$(15,15)$};
\node at  (I) {$\bullet$};

\draw[red] (A)-- (B1);
\draw[green] (A)-- (B2);

\draw[green] (B1)--(C1);
\draw[red] (B1)--(C2);
\draw[green] (B1)--(C3);
\draw[red] (B2)--(C3);
\draw[green] (B2)--(C4);
\draw[red] (B2)--(C5);

\draw[red] (C1) -- (D1);
\draw[green] (C1) -- (D2);
\draw[green] (C2) -- (D1);
\draw[green] (C2) -- (D3);
\draw[green] (C3) -- (D2);
\draw[red] (C3) -- (D3);
\draw[green] (C3) -- (D4);
\draw[red] (C3) -- (D5);
\draw[red] (C4) -- (D4);
\draw[red] (C4) -- (D6);
\draw[red] (C5) -- (D5);
\draw[green] (C5) -- (D6);

\draw[green] (D1) -- (E1);
\draw[green] (D1) -- (E2);
\draw[red] (D2) -- (E2);
\draw[green] (D2) -- (E3);
\draw[red] (D2) -- (E4);
\draw[green] (D3) -- (E2);
\draw[green] (D3) -- (E5);
\draw[red] (D3) -- (E6);
\draw[green] (D4) -- (E3);
\draw[red] (D4) -- (E5);
\draw[red] (D4) -- (E7);
\draw[green] (D5) -- (E4);
\draw[red] (D5) -- (E6);
\draw[green] (D5) -- (E7);
\draw[red] (D6) -- (E7);
\draw[red] (D6) -- (E8);

\draw[green] (E1) -- (F1);
\draw[green] (E2) -- (F1);
\draw[green] (E2) -- (F2);
\draw[red] (E2) -- (F3);
\draw[red] (E3) -- (F2);
\draw[red] (E3) -- (F4);
\draw[red] (E4) -- (F3);
\draw[green] (E4) -- (F4);
\draw[green] (E5) -- (F2);
\draw[red] (E5) -- (F5);
\draw[green] (E6) -- (F3);
\draw[green] (E6) -- (F5);
\draw[green] (E7) -- (F4);
\draw[red] (E7) -- (F5);
\draw[red] (E7) -- (F6);
\draw[red] (E8) -- (F6);

\draw[green] (F1) -- (G1);
\draw[red] (F1) -- (G2);
\draw[green] (F2) -- (G1);
\draw[red] (F2) -- (G3);
\draw[green] (F3) -- (G2);
\draw[green] (F3) -- (G3);
\draw[red] (F4) -- (G3);
\draw[red] (F4) -- (G4);
\draw[green] (F5) -- (G3);
\draw[red] (F5) -- (G5);
\draw[green] (F6) -- (G4);
\draw[red] (F6) -- (G5);

\draw[red] (G1) -- (H1);
\draw[green] (G2) -- (H1);
\draw[green] (G3) -- (H1);
\draw[red] (G3) -- (H2);
\draw[red] (G4) -- (H2);
\draw[green] (G5) -- (H2);

\draw[red] (H1) -- (I);
\draw[green] (H2) -- (I);

\end{tikzpicture}
\caption{The 36 term $\gl_{2,1}$ $qq$-character.}\label{36 pic}
\end{center}
\end{figure}
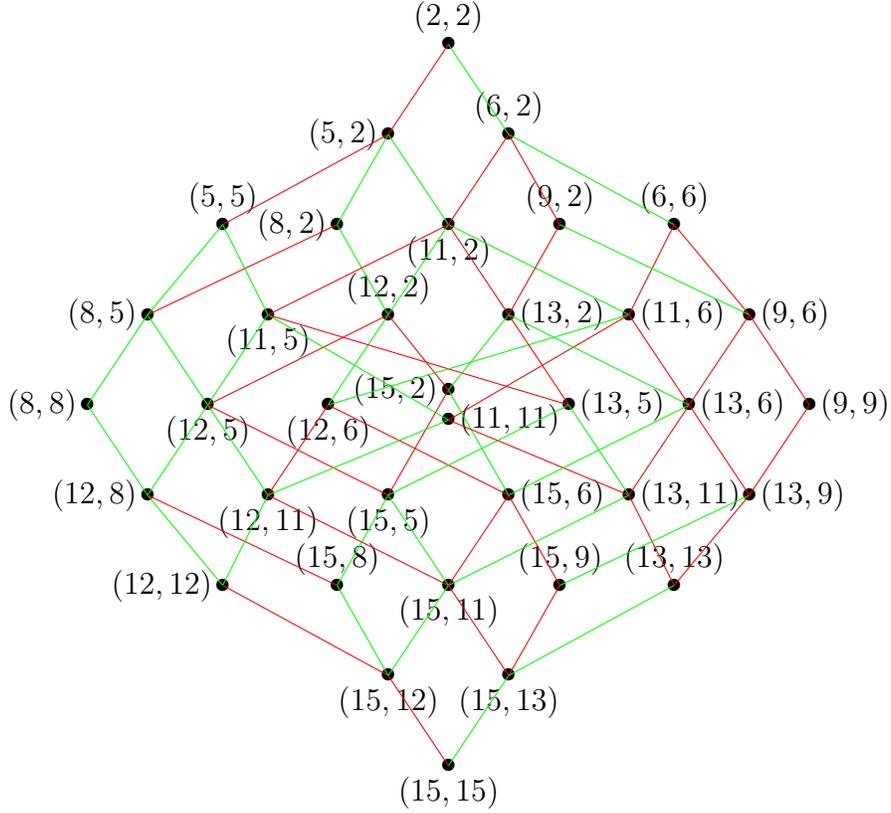

The two $5^2$ term  $qq$-characters are given in Figure \ref{25 2 pic}.
Note that the graphs again have symmetries interchanging colors $2$ and $3$ which correspond to the reflection about the vertical line through nodes $(2,1)$, $(15,7)$ for the first graph and through $(14,2)$, $(18,15)$ for the second one. The second graph is the reflection of the first one with respect the horizontal line through vertices $(12,3)$ and $(13,4)$.

\begin{figure} [ht]
\begin{center}
\begin{tikzpicture}[scale=0.4]

\coordinate  (A) at  (0,7);

\coordinate  (B1) at (-2,4);
\coordinate  (B2) at (2,4);

\coordinate  (C1) at (-7,1);
\coordinate  (C2) at (-3.5,1);
\coordinate  (C3) at (0,1);
\coordinate  (C4) at (3.5,1);
\coordinate  (C5) at (7,1);

\coordinate  (D1) at (-9,-3);
\coordinate  (D2) at (-5,-2.5);
\coordinate  (D3) at (-1.5,-3);
\coordinate  (D4) at (1.5,-3);
\coordinate  (D5) at (5,-2.5);
\coordinate  (D6) at (9,-3);

\coordinate  (E1) at (-9,-6);
\coordinate  (E2) at (-5,-6);
\coordinate  (E4) at (0,-6.5);
\coordinate  (E3) at (0,-5.5);
\coordinate  (E5) at (5,-6);
\coordinate  (E6) at (9,-6);

\coordinate  (F1) at (-6,-9);
\coordinate  (F2) at (-2,-9);
\coordinate  (F3) at (2,-9);
\coordinate  (F4) at (6,-9);

\coordinate  (G) at (0,-12);

\node [above] at (A) {$(2,1)$};
\node at  (A) {$\bullet$};

\node [left] at  (B1) {$(5,1)$};
\node  at  (B1) {$\bullet$};
\node [above] at  (B2) {$(6,1)$};
\node at (B2) {$\bullet$};

\node [left] at  (C1) {$(8,1)$};
\node  at  (C1) {$\bullet$};
\node [above] at  (C2) {$(5,3)$};
\node at (C2) {$\bullet$};
\node  [above] at (C3) {$(11,1)$};
\node at  (C3) {$\bullet$};
\node  [right] at (C4) {$(6,4)$};
\node at (C4) {$\bullet$};
\node  [right] at (C5) {$(9,1)$};
\node at (C5) {$\bullet$};

\node [left]at  (D1) {$(8,3)$};
\node at  (D1) {$\bullet$};
\node  [above] at (D2) {$(12,1)$};
\node at  (D2) {$\bullet$};
\node [below] at  (D3) {$(11,3)$};
\node at  (D3) {$\bullet$};
\node [above] at  (D4) {$(11,4)$};
\node at  (D4) {$\bullet$};
\node [above] at  (D5) {$(13,1)$};
\node at  (D5) {$\bullet$};
\node [right] at  (D6) {$(9,4)$};
\node at  (D6) {$\bullet$};

\node [left]at  (E1) {$(12,3)$};
\node at  (E1) {$\bullet$};
\node  [above] at (E2) {$(12,4)$};
\node at  (E2) {$\bullet$};
\node [right] at  (E3) {$(11,7)$};
\node at  (E3) {$\bullet$};
\node [left] at  (E4) {$(15,1)$};
\node at  (E4) {$\bullet$};
\node [above] at  (E5) {$(13,3)$};
\node at  (E5) {$\bullet$};
\node [right] at  (E6) {$(13,4)$};
\node at  (E6) {$\bullet$};

\node [left] at  (F1) {$(12,7)$};
\node at  (F1) {$\bullet$};
\node [below] at  (F2) {$(15,3)$};
\node at  (F2) {$\bullet$};
\node [below] at  (F3) {$(15,4)$};
\node at  (F3) {$\bullet$};
\node [right] at (F4) {$(13,7)$};
\node at  (F4) {$\bullet$};

\node [below] at  (G) {$(15,7)$};
\node at  (G) {$\bullet$};

\draw[red] (A)-- (B1);
\draw[green] (A)-- (B2);

\draw[green] (B1)--(C1);
\draw[red] (B1)--(C2);
\draw[green] (B1)--(C3);
\draw[red] (B2)--(C3);
\draw[green] (B2)--(C4);
\draw[red] (B2)--(C5);

\draw[red] (C1) -- (D1);
\draw[green] (C1) -- (D2);
\draw[green] (C2) -- (D1);
\draw[green] (C2) -- (D3);
\draw[green] (C3) -- (D2);
\draw[red] (C3) -- (D3);
\draw[green] (C3) -- (D4);
\draw[red] (C3) -- (D5);
\draw[red] (C4) -- (D4);
\draw[red] (C4) -- (D6);
\draw[red] (C5) -- (D5);
\draw[green] (C5) -- (D6);

\draw[green] (D1) -- (E1);
\draw[red] (D2) -- (E1);
\draw[green] (D2) -- (E2);
\draw[red] (D2) -- (E4);
\draw[green] (D3) -- (E1);
\draw[green] (D3) -- (E3);
\draw[red] (D3) -- (E5);
\draw[green] (D4) -- (E2);
\draw[red] (D4) -- (E3);
\draw[red] (D4) -- (E6);
\draw[green] (D5) -- (E4);
\draw[red] (D5) -- (E5);
\draw[green] (D5) -- (E6);
\draw[red] (D6) -- (E6);

\draw[green] (E1) -- (F1);
\draw[red] (E1) -- (F2);
\draw[red] (E2) -- (F1);
\draw[red] (E2) -- (F3);
\draw[green] (E3) -- (F1);
\draw[red] (E3) -- (F4);
\draw[red] (E4) -- (F2);
\draw[green] (E4) -- (F3);
\draw[green] (E5) -- (F2);
\draw[green] (E5) -- (F4);
\draw[green] (E6) -- (F3);
\draw[red] (E6) -- (F4);

\draw[red] (F1) -- (G);
\draw[green] (F2) -- (G);
\draw[red] (F3) -- (G);
\draw[green] (F4) -- (G);
\end{tikzpicture}
\end{center}

\bigskip

\begin{center}
\begin{tikzpicture}[scale=0.4]

\coordinate  (A) at  (0,7);
\node [above] at (A) {$(14,2)$};
\node at  (A) {$\bullet$};

\coordinate  (B1) at (-6,4);
\coordinate  (B2) at (-2,4);
\coordinate  (B3) at (2,4);
\coordinate  (B4) at (6,4);
\node [left] at  (B1) {$(14,5)$};
\node at  (B1) {$\bullet$};
\node [below] at  (B2) {$(16,2)$};
\node at  (B2) {$\bullet$};
\node [below] at  (B3) {$(17,2)$};
\node at  (B3) {$\bullet$};
\node [right] at (B4) {$(14,6)$};
\node at  (B4) {$\bullet$};

\coordinate  (C1) at (-8,1);
\coordinate  (C2) at (-4.5,1);
\coordinate  (C3) at (0,0.5);
\coordinate  (C4) at (0,1.5);
\coordinate  (C5) at (4,1);
\coordinate  (C6) at (8,1);
\node [left] at  (C1) {$(16,5)$};
\node  at  (C1) {$\bullet$};
\node [above] at  (C2) {$(17,5)$};
\node at (C2) {$\bullet$};
\node  [left] at (C3) {$(14,11)$};
\node at  (C3) {$\bullet$};
\node  [right] at (C4) {$(18,2)$};
\node at (C4) {$\bullet$};
\node  [right] at (C5) {$(16,6)$};
\node at (C5) {$\bullet$};
\node  [right] at (C6) {$(17,6)$};
\node at (C6) {$\bullet$};

\coordinate  (D1) at (-9,-3);
\coordinate  (D2) at (-5,-2.5);
\coordinate  (D3) at (-1.5,-3);
\coordinate  (D4) at (1.5,-3);
\coordinate  (D5) at (5,-2.5);
\coordinate  (D6) at (9,-3);
\node [left]at  (D1) {$(16,8)$};
\node at  (D1) {$\bullet$};
\node  [above] at (D2) {$(18,5)$};
\node at  (D2) {$\bullet$};
\node [below] at  (D3) {$(16,11)$};
\node at  (D3) {$\bullet$};
\node [above] at  (D4) {$(17,11)$};
\node at  (D4) {$\bullet$};
\node [above] at  (D5) {$(18,6)$};
\node at  (D5) {$\bullet$};
\node [right] at  (D6) {$(17,9)$};
\node at  (D6) {$\bullet$};

\coordinate  (E1) at (-6,-6);
\coordinate  (E2) at (-3,-6);
\coordinate  (E3) at (0,-6);
\coordinate  (E4) at (3,-6);
\coordinate  (E5) at (6,-6);
\node [left]at  (E1) {$(18,8)$};
\node at  (E1) {$\bullet$};
\node  [above] at (E2) {$(16,12)$};
\node at  (E2) {$\bullet$};
\node [below] at  (E3) {$(18,11)$};
\node at  (E3) {$\bullet$};
\node [above] at  (E4) {$(17,13)$};
\node at  (E4) {$\bullet$};
\node [above] at  (E5) {$(18,9)$};
\node at  (E5) {$\bullet$};

\coordinate  (F1) at (-2,-9);
\coordinate  (F2) at (2,-9);
\node [left] at  (F1) {$(18,12)$};
\node  at  (F1) {$\bullet$};
\node [above] at  (F2) {$(18,13)$};
\node at (F2) {$\bullet$};

\coordinate  (G) at (0,-12);
\node [below] at  (G) {$(18,15)$};
\node at  (G) {$\bullet$};

\draw[red] (A)-- (B1);
\draw[green] (A)-- (B2);
\draw[red] (A)--(B3);
\draw[green] (A)--(B4);

\draw[green] (B1)--(C1);
\draw[red] (B1)--(C2);
\draw[green] (B1)--(C3);
\draw[red] (B2)--(C1);
\draw[red] (B2)--(C4);
\draw[green] (B2)--(C5);
\draw[red] (B3)--(C2);
\draw[green] (B3)--(C4);
\draw[green] (B3)--(C6);
\draw[red] (B4)--(C3);
\draw[green] (B4)--(C5);
\draw[red] (B4)--(C6);

\draw[green] (C1) -- (D1);
\draw[red] (C1) -- (D2);
\draw[green] (C1) -- (D3);
\draw[green] (C2) -- (D2);
\draw[green] (C2) -- (D4);
\draw[green] (C3) -- (D3);
\draw[red] (C3) -- (D4);
\draw[red] (C4) -- (D2);
\draw[green] (C4) -- (D5);
\draw[red] (C5) -- (D3);
\draw[red] (C5) -- (D5);
\draw[red] (C6) -- (D4);
\draw[green] (C6) -- (D5);
\draw[red] (C6) -- (D6);

\draw[red] (D1) -- (E1);
\draw[green] (D1) -- (E2);
\draw[green] (D2) -- (E1);
\draw[green] (D2) -- (E3);
\draw[green] (D3) -- (E2);
\draw[red] (D3) -- (E3);
\draw[green] (D4) -- (E3);
\draw[red] (D4) -- (E4);
\draw[red] (D5) -- (E3);
\draw[red] (D5) -- (E5);
\draw[red] (D6) -- (E4);
\draw[green] (D6) -- (E5);

\draw[green] (E1) -- (F1);
\draw[red] (E2) -- (F1);
\draw[green] (E3) -- (F1);
\draw[red] (E3) -- (F2);
\draw[green] (E4) -- (F2);
\draw[red] (E5) -- (F2);

\draw[red] (F1) -- (G);
\draw[green] (F2) -- (G);

\end{tikzpicture}
\caption{Two $25$ term $\gl_{2,1}$ $qq$-characters.}\label{25 2 pic}
\end{center}
\end{figure}
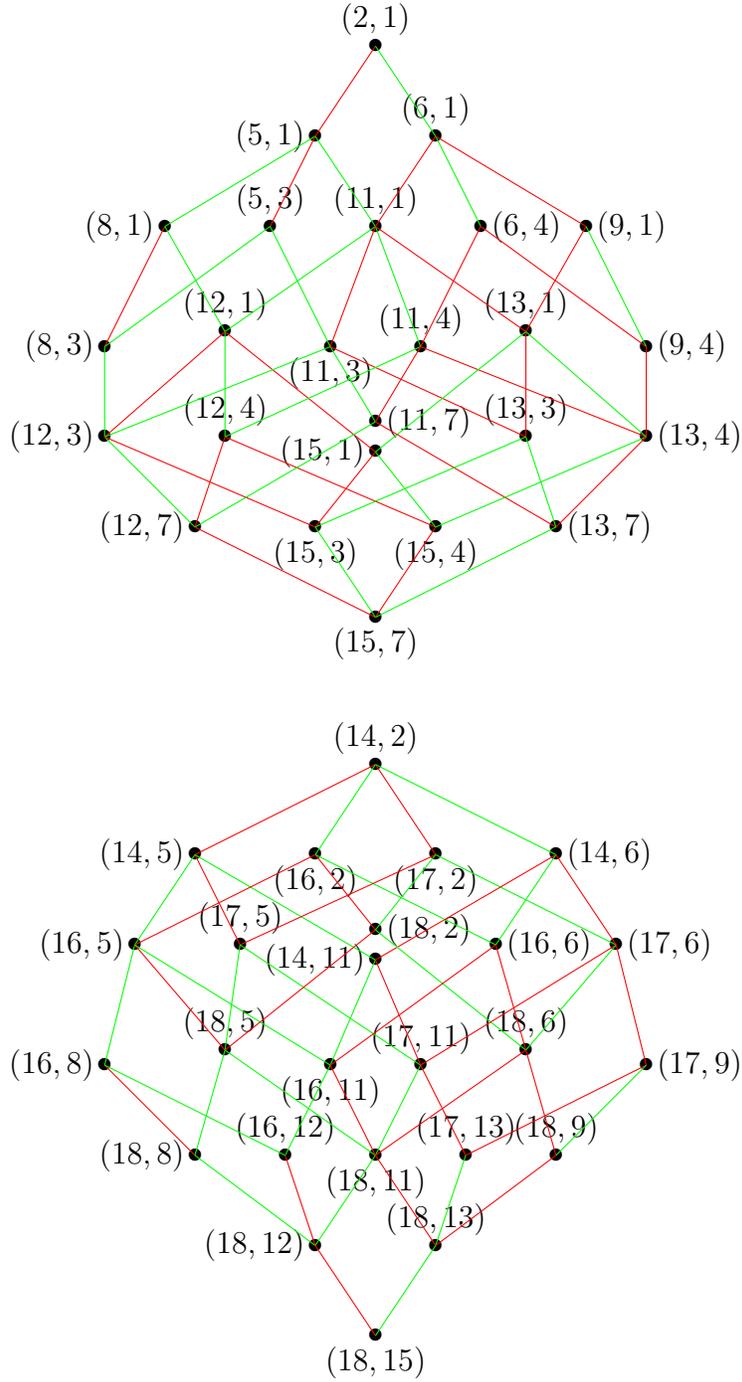

\bigskip

The $16$-term $qq$-character is just a generic product of two squares. 
The two $9$-term $qq$-characters are non-slim squares. Then we have two squares and two trivial $qq$-characters. All of them with the corresponding monomials are given in Figure \ref{small in 130 pic}.

\begin{figure} [ht]
\begin{center}
\begin{tikzpicture}[scale=0.4]
\coordinate  (A) at  (0,8);

\coordinate  (B1) at (-8,5);
\coordinate  (B2) at (-3,5);
\coordinate  (B3) at (3,5);
\coordinate  (B4) at (8,5);

\coordinate  (C1) at (-8,0);
\coordinate  (C2) at (-4,0);
\coordinate  (C3) at (-0.5,0);
\coordinate  (C4) at (0.5,0);
\coordinate  (C5) at (4,0);
\coordinate  (C6) at (8,0);

\coordinate  (D1) at (-8,-5);
\coordinate  (D2) at (-3.5,-5);
\coordinate  (D3) at (3.5,-5);
\coordinate  (D4) at (8,-5);

\coordinate  (E) at (0,-8);

\node [above] at (A) {$(14,1)$};
\node at  (A) {$\bullet$};

\node [left] at  (B1) {$(14,3)$};
\node  at  (B1) {$\bullet$};
\node [above] at  (B2) {$(16,1)$};
\node at (B2) {$\bullet$};
\node  [above] at (B3) {$(17,1)$};
\node at  (B3) {$\bullet$};
\node  [right] at (B4) {$(14,4)$};
\node at (B4) {$\bullet$};

\node [left]at  (C1) {$(16,3)$};
\node at  (C1) {$\bullet$};
\node  [above] at (C2) {$(17,3)$};
\node at  (C2) {$\bullet$};
\node [below] at  (C3) {$(18,1)$};
\node at  (C3) {$\bullet$};
\node [above] at  (C4) {$(14,7)$};
\node at  (C4) {$\bullet$};
\node [above] at  (C5) {$(16,4)$};
\node at  (C5) {$\bullet$};
\node [right] at  (C6) {$(17,4)$};
\node at  (C6) {$\bullet$};

\node [left] at  (D1) {$(16,7)$};
\node at  (D1) {$\bullet$};
\node [below] at  (D2) {$(18,3)$};
\node at  (D2) {$\bullet$};
\node [below] at  (D3) {$(17,4)$};
\node at  (D3) {$\bullet$};
\node [right] at (D4) {$(17,7)$};
\node at  (D4) {$\bullet$};

\node [below] at  (E) {$(18,7)$};
\node at  (E) {$\bullet$};

\draw[red] (A)-- (B1);
\draw[red] (A)-- (B3);
\draw[green] (A)-- (B2);
\draw[green] (A)-- (B4);

\draw[green] (B1)--(C1);
\draw[red] (B1)--(C2);
\draw[green] (B1)--(C4);
\draw[red] (B2)--(C1);
\draw[red] (B2)--(C3);
\draw[green] (B2)--(C5);
\draw[red] (B3)--(C2);
\draw[green] (B3)--(C3);
\draw[green] (B3)--(C6);
\draw[red] (B4)--(C4);
\draw[green] (B4)--(C5);
\draw[red] (B4)--(C6);

\draw[green] (C1) -- (D1);
\draw[red] (C1) -- (D2);
\draw[green] (C2) -- (D2);
\draw[green] (C2) -- (D4);
\draw[red] (C3) -- (D2);
\draw[green] (C3) -- (D3);
\draw[green] (C4) -- (D1);
\draw[red] (C4) -- (D4);
\draw[red] (C5) -- (D1);
\draw[red] (C5) -- (D3);
\draw[green] (C6) -- (D3);
\draw[red] (C6) -- (D4);

\draw[red] (D1) -- (E);
\draw[green] (D2) -- (E);
\draw[red] (D3) -- (E);
\draw[green] (D4) -- (E);

\end{tikzpicture}

\end{center}

\bigskip

\begin{center}
\begin{tabular}{c}

\begin{minipage}{0.33\hsize}
\begin{center}
\begin{tikzpicture}[scale=0.4]
\coordinate  (A) at  (0,4);
\node [above] at (A) {$(1,1)$};
\node at  (A) {$\bullet$};

\coordinate  (B1) at  (-2,2);
\coordinate  (B2) at  (2,2);
\node [left] at (B1) {$(3,1)$};
\node at  (B1) {$\bullet$};
\node [right] at (B2) {$(4,1)$};
\node at  (B2) {$\bullet$};

\coordinate  (C1) at  (-4,0);
\coordinate  (C2) at  (0,0);
\coordinate  (C3) at  (4,0);
\node [left] at (C1) {$(3,3)$};
\node at  (C1) {$\bullet$};
\node [right] at (C2) {$(7,1)$};
\node at  (C2) {$\bullet$};
\node [right] at (C3) {$(4,4)$};
\node at  (C3) {$\bullet$};

\coordinate  (D1) at  (-2,-2);
\coordinate  (D2) at  (2,-2);
\node [left] at (D1) {$(7,3)$};
\node at  (D1) {$\bullet$};
\node [right] at (D2) {$(7,4)$};
\node at  (D2) {$\bullet$};

\coordinate  (E) at  (0,-4);
\node [below] at (E) {$(7,7)$};
\node at  (E) {$\bullet$};

\draw[red] (A) -- (B1);
\draw[green] (A) -- (B2);

\draw[red] (B1) -- (C1);
\draw[green] (B1) -- (C2);
\draw[red] (B2) -- (C2);
\draw[green] (B2) -- (C3);

\draw[green] (C1) -- (D1);
\draw[red] (C2) -- (D1);
\draw[green] (C2) -- (D2);
\draw[red] (C3) -- (D2);

\draw[green] (D1) -- (E);
\draw[red] (D2) -- (E);

\end{tikzpicture}
\end{center}
\end{minipage}

\begin{minipage}{0.33\hsize}
\begin{center}
\begin{tikzpicture}[scale=0.4]
\coordinate  (A) at  (0,4);
\node [above] at (A) {$(14,14)$};
\node at  (A) {$\bullet$};

\coordinate  (B1) at  (-2,2);
\coordinate  (B2) at  (2,2);
\node [left] at (B1) {$(16,14)$};
\node at  (B1) {$\bullet$};
\node [right] at (B2) {$(17,14)$};
\node at  (B2) {$\bullet$};

\coordinate  (C1) at  (-4,0);
\coordinate  (C2) at  (0,0);
\coordinate  (C3) at  (4,0);
\node [left] at (C1) {$(16,16)$};
\node at  (C1) {$\bullet$};
\node [right] at (C2) {$(18,14)$};
\node at  (C2) {$\bullet$};
\node [right] at (C3) {$(17,17)$};
\node at  (C3) {$\bullet$};

\coordinate  (D1) at  (-2,-2);
\coordinate  (D2) at  (2,-2);
\node [left] at (D1) {$(18,16)$};
\node at  (D1) {$\bullet$};
\node [right] at (D2) {$(18,17)$};
\node at  (D2) {$\bullet$};

\coordinate  (E) at  (0,-4);
\node [below] at (E) {$(18,18)$};
\node at  (E) {$\bullet$};

\draw[green] (A) -- (B1);
\draw[red] (A) -- (B2);

\draw[green] (B1) -- (C1);
\draw[red] (B1) -- (C2);
\draw[green] (B2) -- (C2);
\draw[red] (B2) -- (C3);

\draw[red] (C1) -- (D1);
\draw[green] (C2) -- (D1);
\draw[red] (C2) -- (D2);
\draw[green] (C3) -- (D2);

\draw[red] (D1) -- (E);
\draw[green] (D2) -- (E);

\end{tikzpicture}
\end{center}
\end{minipage}

\end{tabular}
\end{center}

\bigskip 

\begin{center}
\begin{tabular}{c}

\begin{minipage}{0.33\hsize}
\begin{center}
\begin{tikzpicture}[scale=0.4]
\coordinate  (A) at  (0,2);
\node [above] at (A) {$(10,1)$};
\node at  (A) {$\bullet$};

\coordinate  (B1) at  (-2,0);
\coordinate  (B2) at  (2,0);
\node [left] at (B1) {$(10,3)$};
\node at  (B1) {$\bullet$};
\node [right] at (B2) {$(10,4)$};
\node at  (B2) {$\bullet$};

\coordinate  (C) at  (0,-2);
\node [below] at (C) {$(10,7)$};
\node at  (C) {$\bullet$};

\draw[red] (A) -- (B1);
\draw[green] (A) -- (B2);

\draw[green] (B1) -- (C);
\draw[red] (B2) -- (C);
\end{tikzpicture}
\end{center}
\end{minipage}

\begin{minipage}{0.33\hsize}
\begin{center}
\begin{tikzpicture}[scale=0.4]
\coordinate  (A) at  (0,2);
\node [above] at (A) {$(14,10)$};
\node at  (A) {$\bullet$};

\coordinate  (B1) at  (-2,0);
\coordinate  (B2) at  (2,0);
\node [left] at (B1) {$(16,10)$};
\node at  (B1) {$\bullet$};
\node [right] at (B2) {$(17,10)$};
\node at  (B2) {$\bullet$};

\coordinate  (C) at  (0,-2);
\node [below] at (C) {$(18,10)$};
\node at  (C) {$\bullet$};

\draw[green] (A) -- (B1);
\draw[red] (A) -- (B2);

\draw[red] (B1) -- (C);
\draw[green] (B2) -- (C);
\end{tikzpicture}
\end{center}
\end{minipage}

\end{tabular}
\end{center}

\bigskip

\begin{center}
\begin{tabular}{c}
\begin{minipage}{0.33\hsize}
\begin{center}
\begin{tikzpicture}[scale=0.4]
\node [above] at (0,0) {$(10,0)$};
\node at (0,0) {$\bullet$};
\end{tikzpicture}
\end{center}
\end{minipage}

\begin{minipage}{0.33\hsize}
\begin{center}
\begin{tikzpicture}[scale=0.4]
\node [above] at (0,0) {$(1,18)$};
\node at (0,0) {$\bullet$};
\end{tikzpicture}
\end{center}
\end{minipage}
\end{tabular}
\caption{$16+9+9+4+4+1+1$ in the $130$.}\label{small in 130 pic}
\end{center}
\end{figure}
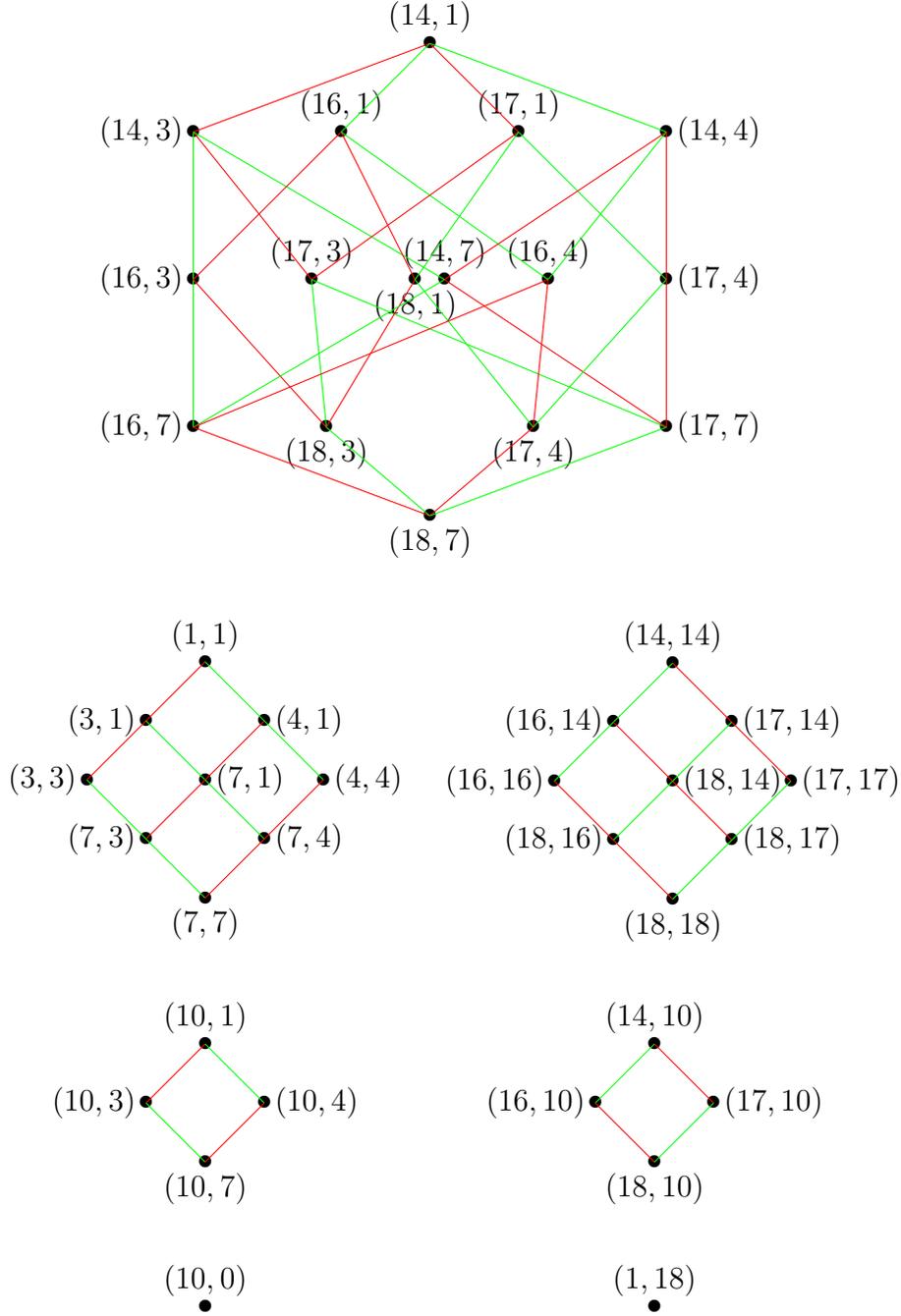

\subsection{Resonances}
In the previous sections we worked with generic parameters $q_0,q_1,q_2$. If the parameters satisfy some relation, the $qq$-characters truncate. Here we discuss a family of such examples.

We start with type D$(2,1;\alpha)$ vector $qq$-characters. 

Note that if $q_1=q_2$ then the D$(2,1;\alpha)$ Cartan matrix \eqref{D Cartan} becomes the $\mathfrak{osp}_{4,2}$ Cartan matrix \eqref{n|n osp Cartan} with colors $1$ and $3$ interchanged. In particular, the infinite vector $qq$-character $\chi^{312}$ of D$(2,1;\alpha)$ type starting from dominant monomial $\bs 3_{q_0^2q_1}^{q_1^{-1}}$, see Figure \ref{D vector pic}, truncates to a 6 term finite $qq$-character of $\mathfrak{osp}_{4,2}$ type, see Figure \ref{osp vector}.
This generalizes as follows.

Fix $k\in\Z_{>0}$ and assume that
\begin{align}\label{res 1}
p_1^k=p_2.
\end{align}
Then the monomials in \eqref{D vector monom} simplify. In particular, $V_{2k,0}^{312}=\bs 1_{q_0}^{q_0^{-1}q_2^{-2}}\bs 2_{q_3q_1^2}^{q_3^{-1}}$, does not have factors of color $3$ and 
$V_{2k+1,1}^{312}=\bs 3^{q_2}_{q_0^{-2}q_2^{-1}}$ does not have variables of colors $2$ and $3$. As a result the vector $qq$-character of D$(2,1;\alpha)$ type truncates and
we have a slim $qq$-character with $4k+2$ terms:
$\chi^{312}=\sum_{s=0}^{2k} V_{s,0}^{312}+\sum_{s=1}^{2k+1}V_{s,1}^{312}$.

The graph of this $qq$-character is given in Figure \ref{D vector truncated pic}.

\begin{figure} [ht]
\begin{center}
\begin{tikzpicture}[scale=0.6]

\node at  (4,4)  {\small $V^{312}_{1,1}$};
\node at  (12.5,4)  {$\dots$};
\node at  (-1,0)  {\small $V^{312}_{0,0}$};
\node at  (4,0)  {\small $V^{312}_{1,0}$};
\node at  (9,0)  {\small $V^{312}_{2,0}$};
\node at  (9,4)  {\small $V^{312}_{2,1}$};
\node at  (16,0)  {\small $V^{312}_{2k-1,0}$};
\node at  (16,4)  {\small $V^{312}_{2k-1,1}$};
\node at  (12.5,0)  {$\dots$};
\node at  (21,4)  {\small$V^{312}_{2k,1}$};
\node at  (21,0)  {\small $V^{312}_{2k,0}$};
\node at  (26,4)  {\small$V^{312}_{2k+1,1}$};

\draw[blue,->]  (4,0.7)--node[left]{{\small $A_{1,1}^{-1}$}}(4,3.3);
\draw[blue,->]  (9,0.7)--node[left]{{\small $A_{1,1}^{-1}$}}(9, 3.3);
\draw[blue,->]  (16,0.7)--node[left]{{\small $A_{1,1}^{-1}$}}(16, 3.3);
\draw[blue,->]  (21,0.7)--node[left]{{\small $A_{1,1}^{-1}$}}(21, 3.3);

\draw[red,->] (4.7,4)-- node[above]{{\small $A_{2,q_2q_1^{-1}}^{-1}$}}(8.3,4);
\draw[green] (9.7,4)-- (10.7,4);
\draw[green] (9.7,0)-- (10.7,0);
\draw[green,->] (14,4)-- (15,4);
\draw[green,->] (14,0)-- (15,0);

\draw[green,->] (-0.3,0)-- node[above]{{\small $A_{3,q_0q_2}^{-1}$}}(3.3,0);
\draw[red,->] (4.7,0)-- node[above]{\small $A_{2,q_2q_1^{-1}}^{-1}$}(8.3,0);

\draw[red,->] (17,0)-- node[above]{\small $A_{2,q_1q_2^{-1}}^{-1}$}(20.3,0);
\draw[red,->] (17,4)-- node[above]{\small $A_{2,q_1q_2^{-1}}^{-1}$}(20.3,4);
\draw[green,->] (21.7,4)-- node[above]{\small $A_{3,q_0^{-1}q_2^{-1}}^{-1}$}(25,4);
\end{tikzpicture}
\end{center}
\caption{The truncated vector D$(2,1;\alpha)$ $qq$-character.}\label{D vector truncated pic}
\end{figure}
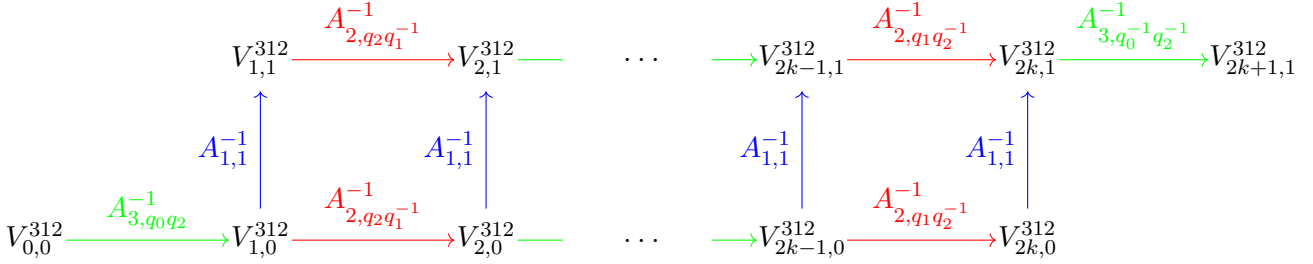

We also note that under resonance \eqref{res 1}, the $qq$-character $\hat\chi^{312}$ of $\hat {\textrm D}(2,1;\alpha)$, see Figure \ref{D hat vector pic}, also truncates to the $qq$-character
$\hat\chi=\sum_{0\leq a-b\leq 2k} \hat V_{ab}^{312}$, where $\hat V_{ab}^{312}$ are given by \eqref{D hat vector monom}.

On the other hand the adjoint $qq$-character $\chi^{18}$ does not truncate. Moreover, it can be obtained as combinatorial fusion:
$$
\chi^{18}=\chi^{312}_{q_0^{-1}q_1}* \tau_{q_2q_3}\chi^{312}.
$$
Note that with a specialization of parameters the combinatorial fusion product changes, as some non-constant monomials become constant.

In particular, for the top monomial we have
$$
m_+^{18}=\tau_{q_0^{-1}q_1}(V^{312}_{0,0})    \tau_{q_2q_3} (V^{312}_{2k-1,0}).
$$

\medskip

Another observation is that if 
 $q_2^2=1$ then the D$(2,1;\alpha)$ Cartan matrix \eqref{D Cartan} becomes the $\gl_{2,2}$ Cartan matrix \eqref{n|n Cartan}, with colors $1$ and $3$ interchanged. In particular, the infinite vector $qq$-character $\chi^{312}$ of D$(2,1;\alpha)$ type starting from dominant monomial $\bs 3_{q_0^2q_1}^{q_1^{-1}}$, see Figure \ref{D vector pic}, truncates to the 4 term finite vector $qq$-character of $\gl_{2,2}$ type, see Figure \ref{A vector pic}.
This also generalizes as follows.

Fix $k\in\Z_{>0}$ and assume that
\begin{align}\label{res 2}
    p_1^k=q_0^2.
\end{align}
Then we have the following cancellations: 
$V_{2k-1,0}=\bs 1_{q_0}^{q_0^{-1}q_3^{-2}}\bs 3^{q_2^{-1}}_{q_2^{-1}q_3^{-2}},$  and $V_{2k,1}=\bs 2_{q_3q_2^2}^{q_3}$.

As a result the vector $qq$-character of D$(2,1;\alpha)$ type truncates and
we have a slim $qq$-character with $4k$ terms:
$\chi^{312}=\sum_{s=0}^{2k-1} V_{s,0}^{312}+\sum_{s=1}^{2k}V_{s,1}^{312}$.

Moreover, in this case, the resonance condition \eqref{res 2} is invariant under swapping $q_2$ and $q_3$. Therefore, we have another slim $qq$-character $\chi^{213}$ with $4k$ terms obtained by truncation of $\chi^{213}$.

We also note that under resonance \eqref{res 2}, the $qq$-character $\hat \chi^{312}$ of $\hat {\textrm D}(2,1;\alpha)$, see Figure \ref{D hat vector pic}, also truncates to the $qq$-character
$\hat\chi=\sum_{0\leq a-b\leq 2k-1} \hat V_{ab}^{312}$, where $\hat V_{ab}^{312}$ are given by \eqref{D hat vector monom}.

If $k>1$, the adjoint $qq$-character $\chi^{18}$ does not truncate under this resonance either. And again it can be obtained as combinatorial fusion:
$$
\chi^{18}=\tau_{q_0^{-1}q_1}(\chi^{213})*\tau_{q_2q_3}(\chi^{312}).
$$
In particular, for the top monomial we have
$$
m_+^{18}=\tau_{q_0^{-1}q_1}(V^{213}_{0,0})    \tau_{q_2q_3} (V^{312}_{2k-2,0}).
$$

\section{Slim $qq$-characters and representations of quantum groups.}\label{sec q char}
The combinatorics of $qq$-characters is similar but not identical to that of $q$-characters. In this section we clarify this relation. Our conclusion is that setting $q_0=1$ in a slim $qq$-character one obtains a $q$-character of an appropriate quantum group (under some  technical assumptions).

\subsection{Quantum group of a general Cartan matrix of fermionic type}

We specialize at $q=1$.

Let $C$ be a general Cartan matrix of fermionic type as in Section \ref{sec terminology}. Then after specialization $q=1$, we have $c_{ii}=0$. We assume that  $C$ remains non-degenerate after the specialization.

Let 
$$g_{ij}(z,w)=z-\sigma_{ij}w.
$$
Starting from Cartan matrix $C$, we define an algebra $U_C$. Let $\tilde R$ be the field of rational functions in variables $q_1,q_2,...$  with complex coefficients. 

Let $U_C$ be the algebra over $\tilde R$ generated by coefficients of the series $E_i(z)=\sum_{j\in\Z}E_{i,j}z^{-j}$, $F_i(z)=\sum_{j\in\Z}F_{i,j}z^{-j}$, $K_i^{\pm}(z)=\sum_{\pm j\in\Z_{\geq 0}}K_{i,j}z^{-j}$  $(i\in I)$, subject to the relations
\begin{align*}
    K_i^\pm (z) K_j^\pm (w) = K_j^\pm (w)K_i^\pm (z), \qquad  K_i^\pm (z) K_j^\mp (w) = K_j^\mp (w)K_i^\pm (z), \qquad K_{i,0}^+K_{i,0}^-=1,
    \end{align*}
    \begin{align*}
     g_{ij}(z,w) K_i^\pm(z)E_j(w)+g_{ij}(w,z) E_j(w)K_i^\pm(z)=0, \\
     g_{ij}(w,z) K_i^\pm(z)F_j(w)+g_{ij}(z,w) F_j(w)K_i^\pm(z)=0,
 \end{align*}   
 \begin{align*}
     g_{ij}(z,w) E_i(z)E_j(w)-g_{ij}(w,z) E_j(w)E_i(z)=0, \\
     g_{ij}(w,z) F_i(z)F_j(w)-g_{ij}(z,w) F_j(w)F_i(z)=0,
 \end{align*}  
 \begin{align*}
    E_i(z)E_i(w)+ E_i(w)E_i(z)=0, \\
     F_i(z)F_i(w)+F_i(w)F_i(z)=0,
 \end{align*}  
\begin{align*}
    E_i(z)F_j(w)+F_j(w)E_i(z)=\delta_{ij}\delta(z/w)(K^+_i(z)-K_i^-(w)).
\end{align*}
Here $\delta(z)=\sum_{i\in\Z} z^i$ is the delta function.

Several remarks are in order here.
\setlist[enumerate]{label={\alph*.}}
\begin{enumerate}
\item The algebra $U_C$ is written in Drinfeld generators.

\item  The level of the algebra $U_C$ is set to be zero.

\item Generators $E_{i,j}$ with the same index $i$ skew-commute and so do $F_{i,j}$. In other words $U_C$ is a superalgebra and $E_i(z)$, $F_i(z)$ are fermionic currents. 

\item The Serre relations are omitted. In most examples, the Serre relations are known, but it seems to be rather difficult to write the Serre relations in full generality. We do not discuss them in this text. 

\item  In all examples studied in this text, $U_C$ is isomorphic (or expected to be isomorphic) to the standard quantum affine algebra (without the Serre relations). 
\end{enumerate}

The algebra $U_C$ carries a topological coproduct given by 
\begin{equation}
\begin{aligned}\label{coproduct}
&\Delta K_i^\pm(z)=K_i^\pm(z)\otimes K_i^{\pm}(z), \\
&\Delta E_i(z)=E_i(z)\otimes K_i^{-}(z)+ 1\otimes E_i(z), \\
&\Delta F_i(z)=K_i^+(z)\otimes F_i(z)+ F_i(z)\otimes 1.
\end{aligned}
\end{equation}
We note that the coproduct is a homomorphism of superalgebras and in the tensor product $A\otimes B$ of superalgebras we follow the usual sign rule: we have $(a_1\otimes b_1)(a_2\otimes b_2)=(-1)^{|b_1||a_2|} a_1a_2\otimes b_1b_2$, $a_i\in A$, $b_i\in B$.

For $\sigma \in R$, we have a shift of spectral parameter isomorphism
$\tau_\sigma: U_C \to U_C$  given by
$$
\tau_\sigma(K_i^\pm (z))=K_i^\pm (\sigma z), \qquad 
\tau_\sigma(F_i(z))=F_i (\sigma z), \qquad 
\tau_\sigma(E_i(z))=E_i (\sigma z).
$$

For $J\subset I$, let $C_J$ denote the matrix obtained from $C$ by deleting all rows and columns corresponding to all $i\not \in J$.
Then we have a map $\rho_J:\ U_{C_J}\to U_C$,
$$
K_j^{\pm}(z) \mapsto K_j^{\pm}(z), \qquad E_j(z)\mapsto E_j(z), \qquad 
 F_j(z)\mapsto F_j(z)\qquad (j\in J).$$
 Thus any $U_C$-module $M$ is also a $U_{C_J}$-module which we denote by
 $\rho_JM$.
 
\medskip 

In the cases when $C$ is of type $\gl_{n+1,n}$  (resp. $\gl_{n,n}$) the algebra $U_C$ coincides with the quantum affine algebra $U_{q_1}\hat{\gl}_{n+1,n}$ (resp. $U_{q_1}\hat{\gl}_{n,n}$) (without Serre relations), see \cite{Y}. In the affine case, when $C$ is of type $\hat{\gl}_{n,n}$, $U_C$ is expected to be related to the quantum toroidal superalgebra of type $\gl_{n,n}$. The quantum toroidal superalgebras of type $\gl_{m,n}$, with $m\neq n$ and with all possible choices of the Cartan matrix, were defined and studied in\cite{BM}\footnote{The quantum toroidal algebras of type $\gl_{m,n}$ depend on two independent parameters, our setting corresponds to the case $d=1$ in \cite{BM}.}. The case of $m=n$ is not discussed there, though the relations of $U_C$ are expected to hold. 

To the best of our knowledge, the Drinfeld realizations of quantum affine superalgebras $\hat{\mathfrak{osp}}_{2m+2,2m}$  and  $\hat{\mathfrak{osp}}_{2m,2m}$ have not been established. 

When $C$ is of type D$(2,1;\alpha)$, the algebra $U_C$ coincides with the quantum affine superalgebra of type D$(2,1;\alpha)$ (without Serre relations), see \cite{HSTY}\footnote{Our parameters $q_i$ are related to $q$ and $x(=\alpha)$ in \cite{HSTY} by
$q_1=q^{-x}$, $q_2=q^{x+1}$, $q_3=q^{-1}$.}. When $C$ is of type $\hat {\textrm D}(2,1;\alpha)$, the algebra $U_C$ is expected to coincide with the quantum toroidal superalgebra of type D$(2,1;\alpha)$ (without Serre relations), whose definition is yet to be established, see \cite{FJM1}.

We do not know if there are interesting examples beyond the standard supersymmetric quantum affine and quantum toroidal superalgebras.

\subsection{Admissible  untangled $qq$-characters.}
We introduce terminology related to the specialized $qq$-characters.

We call a degree zero $qq$-character admissible if after the specialization $q=1$ it remains generic with all non-zero coefficients one and if there are no cancellations of variables. 

If a $qq$-character is admissible, then clearly it is slim. All explicit examples of slim $qq$-character we discuss in this paper are admissible.

\medskip

We now choose an admissible $qq$-character $\chi$ and specialize to $q=1$ in the rest of the section.  We keep the concepts of dominant and anti-dominant monomials and variables. Note that any variable $Y_{i,\sigma}$ occurring in a monomial $m\in\chi$ is either dominant or anti-dominant. Also a graph of $\chi$ descends to a graph of the specialized character.

The restriction of any admissible $qq$-character $\chi$ to any color $i\in I$,  $\rho_{\{i\}}\chi$, is a sum of the form
$\sum_s 2^{l_s} \prod_{j=1}^{l_s}Y_{i,\sigma_{j,s}}Y_{i,\tau_{j,s}}^{-1}$. Here each summand  corresponds to a generic product of $l_s$ blocks of length 2. Then the graph of such a summand is an $l_s$-dimensional cube.

\medskip

Let $i,j\in I$, and let $\nu,\tau\in R$ be monomials in $\chi$ such that $\nu\neq \tau\sigma_{ij}^{\pm1}$. A square in $\chi$ associated to  colors $i$ and $j$ and monomials $\nu, \tau$ is a set of four distinct monomials $m_1,m_2,m_3,m_4$ which are connected on the graph of $\chi$ as shown in Figure \ref{square pic}. That is we have arrows $m_1{\xrightarrow {i,\nu}} m_2$, $m_1{\xrightarrow {j,\tau}} m_3$, $m_2{\xrightarrow {j,\tau}} m_4$, and $m_3{\xrightarrow {i,\nu}} m_4$.

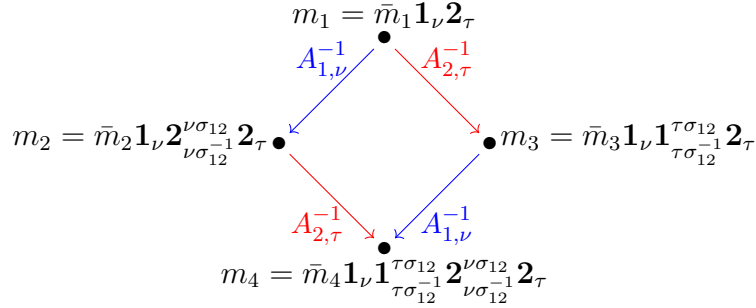
\begin{figure}[H]
\begin{center}
\begin{tikzpicture}[scale=0.7]
\coordinate  (A) at  (0,2);
\node [above] at (A) {$m_1=\bar m_1\bs 1_\nu \bs 2_\tau$};
\node at  (A) {$\bullet$};

\coordinate  (B1) at  (-2,0);
\coordinate  (B2) at  (2,0);
\node [left] at (B1) {$m_2=\bar m_2\bs 1_\nu \bs 2_{\nu\sigma_{12}^{-1}}^{\nu\sigma_{12}}\bs 2_\tau$};
\node at  (B1) {$\bullet$};
\node [right] at (B2) {$m_3=\bar m_3\bs 1_\nu \bs 1_{\tau\sigma_{12}^{-1}}^{\tau\sigma_{12}}\bs 2_\tau$};
\node at  (B2) {$\bullet$};

\coordinate  (C) at  (0,-2);
\node [below] at (C) {$m_4=\bar m_4\bs 1_\nu \bs 1_{\tau\sigma_{12}^{-1}}^{\tau\sigma_{12}}\bs 2_{\nu\sigma_{12}^{-1}}^{\nu\sigma_{12}}\bs 2_\tau$};
\node at  (C) {$\bullet$};

\draw[blue, ->] (-0.2,1.8)--node[above]{{\small $A_{1,\nu}^{-1}\ \ $}} (-1.8,0.2);
\draw[red, ->] (0.2,1.8)--node[above]{{\small $\ \ A_{2,\tau}^{-1}$}} (1.8,0.2);

\draw[red, ->] (-1.8,-0.2)--node[below]{{\small $A_{2,\tau}^{-1}\ \ \ $}} (-0.2,-1.8);
\draw[blue, ->] (1.8,-0.2)--node[below]{{\small $\ \ A_{1,\nu}^{-1}$}} (0.2, -1.8);
\end{tikzpicture}
\end{center}
\caption{A square associated to colors $1$, $2$, and monomials $\nu$, $\tau$. }\label{square pic}
\end{figure}

Define the constant of the square by the formula
$$
c_{\nu\tau}^{ij}=\frac{\nu-\sigma_{ij}\tau}{\tau-\sigma_{ij}\nu}. 
$$
We note that if $\sigma_{ij}=1$ then $c_{\nu\tau}^{ij}=-1$. We also note that $c_{\nu\tau}^{ij} c_{\tau\nu}^{ji}=1$. 

Let $S$ be the set of edges of the graph of $\chi$. We call a map  $a:\ S\to \tilde R\backslash \{0\}$ a supplement of $\chi$ if 
for any square associated to colors $i,j$ and monomials $\nu,\tau$ we have
$a(m_1,m_3)a(m_3,m_4)=c^{ij}_{\nu\tau} a(m_1,m_2)a(m_2,m_4)$.

We note that if a supplement exists, and if $T$ is a spanning tree of the graph of $\chi$, then there exists a supplement which equals one on any edge in $T$.

\medskip
Let $i,j\in I$ be two distinct colors, and $\tau,\nu$ two monomials such that $\nu\neq \tau\sigma_{ij}^{\pm1}$. Consider the corresponding square made of four monomials $m_1,m_2,m_3,m_4$ connected by four arrows as in Figure \ref{square pic}.

If a graph  of a $qq$-character $\chi$ contains exactly three of the  monomials $m_1,m_2,m_3,m_4$ connected by two arrows, then we say that the graph of $\chi$ has an incomplete square.  

In other words, an incomplete square means that a variable of color $i$ which was dominant (antidominant) became antidominant (dominant) after going along an edge of color  $j$, $j\neq i$.

Note that by the definition of the $qq$-character all squares of one color (that is where $j=i$) are automatically complete.

\medskip

We call a $qq$-character $\chi$ untangled if $\chi$ has a graph which has a supplement and no incomplete squares.

\begin{conj}
Any admissible $qq$-character with a dominant monomial is untangled.
\end{conj}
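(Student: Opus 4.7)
The plan is to establish the two properties defining an untangled $qq$-character—absence of incomplete squares and existence of a supplement—separately, both relying on the algorithm of Section~\ref{sec algorithm} and the slim structure implied by admissibility. I write $\bar c$ for the specialization at $q=1$ of a quantity $c$.

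For the no-incomplete-squares part, consider a vertex $m_1$ with two outgoing edges $m_1 \xrightarrow{i,\nu} m_2$ and $m_1 \xrightarrow{j,\tau} m_3$ of distinct colors $i\neq j$ and with $\nu \neq \tau\sigma_{ij}^{\pm 1}$ at $q=1$. Since $\chi$ is slim (hence every block has length $\leqslant 2$), we have $\bar m_2 = \bar m_1 \bar A_{i,\nu}^{-1}$ and $\bar m_3 = \bar m_1 \bar A_{j,\tau}^{-1}$. I would show that the expected fourth vertex $\bar m_4 = \bar m_1 \bar A_{i,\nu}^{-1}\bar A_{j,\tau}^{-1}$ lies in $\chi$ by lifting to the unspecialized character and applying the algorithm in both orders: first expanding $m_1$ in color $i$ then in $j$, and conversely. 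The square condition $\nu \neq \tau\sigma_{ij}^{\pm 1}$ at $q=1$, combined with admissibility, rules out the two degenerate scenarios where applying $A_{i,\nu}^{-1}$ to $m_2$ in the $j$-direction would either cancel the dominant variable $Y_{j,q\tau}$ (case $\sigma_{ji}\nu = q\tau$) or create a square $Y_{j,q\tau}^2$ (case $\sigma_{ji}^{-1}\nu = q\tau$); either scenario forces a violation of the generic/slim condition on $\chi$, contradicting admissibility. Therefore $m_4$ is present with the appropriate edges.

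For the existence of a supplement, I would choose a spanning tree $T$ of the graph rooted at the dominant monomial $m_+$, which exists because by assumption $\chi$ is generated from $m_+$ via the algorithm. Declare $a \equiv 1$ on $T$ and extend to edges $e \notin T$ by requiring the square condition on the unique cycle in $T \cup \{e\}$, filled by a 2-chain of squares. Well-definedness of $a$ reduces to a 2-cocycle identity on 3-cubes: for any three pairwise commuting expansions $A_{i,\nu}^{-1}$, $A_{j,\tau}^{-1}$, $A_{k,\mu}^{-1}$ generating a subcube of eight monomials of $\chi$, the product of the six square-constants $c^{\cdot\cdot}_{\cdot\cdot}$ around its faces (with appropriate orientations) equals $1$. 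This is forced because opposite faces of the cube carry identical constants (the third expansion parameter does not enter the formula for $c^{ij}_{\nu\tau}$), and the two copies appear with opposite signs in the oriented product.

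The hard part is controlling the combinatorics in the non-linear slim case, where a vertex $m_1$ carries several variables of the same color forming multiple blocks simultaneously. Here the graph has parallel $i$-colored edges emanating from $m_1$, and the square analysis must be performed for each relevant pair of variables. One must check that the interactions among simultaneously expanded blocks of the same color do not generate new incomplete squares, and that the 2-cocycle identity continues to hold across ``mixed'' 3-cubes that combine intra-block and inter-block expansions. I expect this case analysis, together with careful treatment of parameter coincidences that occur at $q=1$ but not before, to constitute the main technical difficulty of the proof.
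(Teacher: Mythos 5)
The statement you are proving is not proved in the paper at all: it appears there only as a conjecture, so there is no proof of record to match, and your text would have to stand as a complete, self-contained argument. It does not. What you have written is a plan with several load-bearing steps asserted rather than established, and you yourself flag the non-linear slim case (several blocks of the same color at one vertex, parallel edges, mixed $3$-cubes) as ``the main technical difficulty'' and leave it open; at that point the proposal is a research programme, not a proof.

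Concretely, the gaps are these. For the no-incomplete-squares part you only treat the configuration in which the top monomial $m_1$ is present with two outgoing edges of distinct colors, but the paper's definition of an incomplete square allows any three of $m_1,m_2,m_3,m_4$ joined by two arrows -- e.g.\ two arrows into $m_4$ with $m_1$ missing, or a path $m_1\to m_2\to m_4$ with $m_3$ missing -- and these cases are not addressed. Moreover, ``apply the algorithm in both orders'' presupposes that the admissible character is reproduced by the algorithm from its dominant monomial and that the two expansions commute inside $\chi$; the paper itself stresses that in the super setting one cannot recognize dominant monomials before the character is built, so this commutation needs an actual argument, especially after the specialization $q=1$ where shifts that were distinct may collide. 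For the supplement, defining $a\equiv 1$ on a spanning tree and extending edge by edge is only well defined if every cycle of the graph bounds a $2$-chain of squares, and the consistency of the prescribed square constants only reduces to your $3$-cube identity if every such $2$-cycle is generated by cube boundaries; both of these topological facts about graphs of admissible $qq$-characters are asserted without proof, and they are exactly where the difficulty lies (the $3$-cube identity itself, granting opposite faces carry equal constants $c^{ij}_{\nu\tau}$, is the easy part). Until the missing configurations, the cycle-filling claim, and the same-color/parallel-edge combinatorics are handled, the conjecture remains open.
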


\subsection{A family of tame $U_C$ modules}
We construct a $U_C$-module from an admissible untangled $qq$-character.

For all variables $q_i\in R$ we add to $\tilde R$ their square roots  $q_i^{1/2}$. 

Fix a non-zero evaluation parameter $u\in\tilde R$. For $i\in I$ define the multiplicative homomorphisms $f_i$ mapping monomials in $\mc Y$ to rational functions in $\tilde R(z)$,  by the rule
$$
f_i(Y_{i,\sigma})=\frac{1}{\sigma^{-1/2}z-\sigma^{1/2}u}, \qquad 
f_i(Y_{j,\sigma})=1 \quad (j\neq i).
$$
If $m$ is a degree zero monomial, then $f_i(m)$ is a rational function of $u/z$ which we also write as $f_i(m;u/z)$. It is well defined at $z=0$ and at $z=\infty$, and $f_i(m;0)f_i(m;\infty)=1$.
  
Given an admissible $qq$-character $\chi=\sum_sm_s$, consider the vector space $M_{\chi,u}$ with basis $\{v_{m_s}\}$ labelled by monomials $m_s$.  

Assume we have a graph of $\chi$ with a supplement $a:S\to \tilde R$ of $\chi$ assigning to each edge of the graph of $\chi$ a non-zero element of $\tilde R$. Define a map $b:S\to \tilde R $ of $\chi$ as follows. For an edge $m{\xrightarrow {i,\tau}} n$, $m,n\in\chi,$ we set
$$
b(m,n)=a(m,n)^{-1}\, \mathop {\res}_{z=\tau u} f_i(m;u/z).
$$

Define the action of generators of $U_C$ on $M_{\chi,u}$  by the formulas
\begin{equation}
\begin{aligned}\label{action}
&K_i^{\pm}(z) v_{m_j} = f_i^\pm(m_j;u/z) v_{m_j}, \\
&F_i(z) v_{m_j} =\sum_{s, \ m_j {\xrightarrow {i,\tau_s}} m_s} a(m_j,m_s)\delta(\tau_s u/z) v_{m_s}, \\
&E_i(z) v_{m_j} =\sum_{s, \ m_s {\xrightarrow {i,\tau_s}} m_j} b(m_s,m_j)\delta(\tau_s u/ z) v_{m_s}.
\end{aligned}
\end{equation}
Here $f_i^\pm(m_j;u/z)$ stands for the expansion of rational function $f_i(m_j;u/z)$ at $z^{\mp 1}=0$ and the sum is taken over all edges of color $i$ of the graph of $\chi$ starting (for $F_i(z)$) or ending (for $E_i(z)$) at $m_j$.

\begin{thm}\label{module thm} Let $\chi$ be an admissible untangled $qq$-character. Then
formulas \eqref{action} define a $U_C$-module structure on $M_{\chi,u}$. 
\end{thm}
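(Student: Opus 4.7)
The plan is to verify each defining relation of $U_C$ directly on the basis $\{v_m\}_{m\in\chi}$. The commutativity of the $K_i^\pm(z)$ is automatic from their diagonal action, and $K_{i,0}^+K_{i,0}^-=1$ follows from $f_i(m;0)f_i(m;\infty)=1$, which holds because each $m\in\chi$ has degree zero. For the $K_i^\pm$--$F_j$ relation (and $K$--$E$ symmetrically), the key identity is that for each edge $m\xrightarrow{j,\tau}m'$,
\[
\frac{f_i(m';u/z)}{f_i(m;u/z)} = \frac{z-\sigma_{ij}\tau u}{\sigma_{ij}z-\tau u} = -\frac{g_{ij}(z,\tau u)}{g_{ij}(\tau u,z)},
\]
which one obtains from $\rho_{\{i\}}(A_{j,\tau})=Y_{i,\sigma_{ij}\tau}Y_{i,\sigma_{ij}^{-1}\tau}^{-1}$. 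The $\delta(\tau u/w)$ produced by $F_j(w)$ lets one substitute $w=\tau u$ in $g_{ij}(w,z)$ and $g_{ij}(z,w)$, and the required cancellation is immediate.

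Next, for $g_{ij}(z,w)F_i(z)F_j(w)=g_{ij}(w,z)F_j(w)F_i(z)$ with $i\neq j$, each two-step path $v_{m_1}\to v_{m_3}\to v_{m_4}$ on the left completes, by the no-incomplete-squares hypothesis, to a square $(m_1,m_2,m_3,m_4)$ as in Figure~\ref{square pic}; the relation then reduces to a scalar identity between $a(m_1,m_3)a(m_3,m_4)$ and $a(m_1,m_2)a(m_2,m_4)$, which is exactly the supplement condition once one recognizes that $g_{ij}(\nu u,\tau u)/g_{ij}(\tau u,\nu u)$ matches $c^{ij}_{\nu\tau}$ up to sign. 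The same-color fermionic relation $F_i(z)F_i(w)+F_i(w)F_i(z)=0$ follows by the same analysis: slimness forces two consecutive $i$-edges through $m$ to lie in distinct blocks of length two (and same-color squares are automatically complete by the definition of a $qq$-character), and the specialization $\sigma_{ii}=q=1$ yields $c^{ii}_{\nu\tau}=-1$, supplying the sign demanded by the anticommutator. The $E$--$E$ relations follow by replacing $a$ with $b$ throughout.

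The main obstacle I expect is the $E$--$F$ anticommutator $E_i(z)F_j(w)+F_j(w)E_i(z)=\delta_{ij}\delta(z/w)(K_i^+(z)-K_i^-(w))$. For $i\neq j$ the off-diagonal components cancel in matched pairs through the same square/supplement mechanism. For $i=j$, the crucial observation is that after specialization $A_{i,\tau}$ has trivial color-$i$ part, so $f_i(m')=f_i(m)$ along any $i$-edge; consequently each $i$-edge incident to $m$ (incoming or outgoing) contributes
\[
a\cdot b\cdot \delta(\tau u/z)\delta(\tau u/w) = \bigl(\Res_{z=\tau u} f_i(m;u/z)\bigr)\,\delta(\tau u/z)\,\delta(z/w)
\]
to the diagonal, by the very definition of $b$. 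Since the positive powers $Y_{i,\tau}$ in $m$ are in bijection with these edges, summing reproduces the partial-fraction expansion $f_i^+(m;u/z)-f_i^-(m;u/z)=\sum_\tau \bigl(\Res_{z=\tau u} f_i(m;u/z)\bigr)\,\delta(\tau u/z)$, and the identity $\delta(z/w)f_i^-(m;u/w)=\delta(z/w)f_i^-(m;u/z)$ identifies this sum with the right-hand side. Off-diagonal $v_{m''}$-components with $m''\neq m$ and $i=j$ again require the supplement identity $c^{ii}_{\nu\tau}=-1$ applied to the natural same-color square they determine; the delicate point, where I expect the computation to demand the greatest care, is tracking all fermionic and residue signs so that this cancellation is exact and the partial-fraction matching of poles with edges is realized with the correct overall coefficient.
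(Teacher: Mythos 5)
Your overall strategy is the same as the paper's: check the relations vector by vector, use the ratio identity $f_i(mA_{j,\tau}^{-1})/f_i(m)=-g_{ij}(z,\tau u)/g_{ij}(\tau u,z)$ for the $K$--$E$, $K$--$F$ relations, reduce the quadratic $EE$, $FF$, $EF$ relations to the supplement identity on complete squares together with the residue relations $\Res_{z=\tau u}f_j(mA_{i,\nu}^{-1};u/z)=-c^{ij}_{\nu\tau}\Res_{z=\tau u}f_j(m;u/z)$ and the $q=1$ identity $f_i(mA_{i,\tau}^{-1})=f_i(m)$, and match the diagonal part of $E_iF_i+F_iE_i$ with $K_i^+-K_i^-$ by partial fractions (your bijection between poles of $f_i(m)$ and incident $i$-edges is exactly what makes this work, and it is valid since in an admissible character every positive power $Y_{i,\sigma}$ is dominant or anti-dominant). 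Two small bookkeeping points: $g_{ij}(\nu u,\tau u)/g_{ij}(\tau u,\nu u)=c^{ij}_{\nu\tau}$ exactly, not just up to sign, and one must use the factor $g_{ij}(w,z)$ in front of $F_i(z)F_j(w)$ (as in the defining relations) for the supplement condition to appear with $c^{ij}_{\nu\tau}$ rather than its inverse.

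There is, however, a genuine gap: you assert that every mixed-color two-step path $v_{m_1}\to v_{m_3}\to v_{m_4}$ completes to a square ``by the no-incomplete-squares hypothesis.'' That hypothesis only concerns squares, and a square is by definition attached to labels with $\nu\neq\tau\sigma_{ij}^{\pm1}$; when $\nu=\tau\sigma_{ij}^{\pm1}$ the opposite corner $m_2=m_1A_{i,\nu}^{-1}$ may genuinely be absent from $\chi$ (for instance because it would contain a square of a variable, or because a needed variable cancels), and untangledness says nothing about it. This case is not exotic: already in the vector $qq$-character of D$(2,1;\alpha)$ the path $V^{312}_{0,0}\xrightarrow{3,q_0q_2}V^{312}_{1,0}\xrightarrow{1,1}V^{312}_{1,1}$ has, after the specialization $q=1$, $\tau=\sigma_{13}\nu$ and no completing square. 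Your square/supplement cancellation is then unavailable, and the relation must instead be verified directly: only one ordering of the currents produces $v_{m_4}$, and its coefficient is killed because the structure factor vanishes on the support of the delta functions, e.g. $(z-\sigma_{ij}w)\,\delta(\nu u/z)\,\delta(\tau u/w)=0$ when $\nu=\sigma_{ij}\tau$ (this is precisely the closing argument of the paper's proof). One should also observe that the complementary bad configuration --- both paths present while $\nu=\tau\sigma_{ij}^{\pm1}$ --- cannot occur, since then one side of the relation would vanish identically while the other does not. Without this degenerate case the verification of the $FF$, $EE$ and mixed $EF$ relations is incomplete; with it added, your argument is sound.
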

\begin{proof}
Note that
$$
f_i(A_{j,\tau};u/z)=\sigma_{ij}\frac{1-\sigma_{ij}^{-1}\tau u/z}{1-\sigma_{ij}\tau u/z}=-\frac{g_{ij}(\tau u,z)}{g_{ij}(z,\tau u)}.
$$
Using this it is easy to check that
the quadratic relations involving $K_i^\pm(z)$ with all generators are satisfied.

We check that all other relations in $U_C$ hold as well. All relations are quadratic. 
Apply a relation to a vector $v_m$. Then we get a lot of terms involving $v_{m'}$, where $m'$ is either $m$ (in $E_i(z)F_i(w)v_m$) or obtained from $m$ by walking along two edges of the graph of $\chi$. We show that the total result is zero due to cancellations in squares. 

Consider the square on Figure \ref{square pic}. Then the part of the action related to this square is given in Figure \ref{square action pic}.
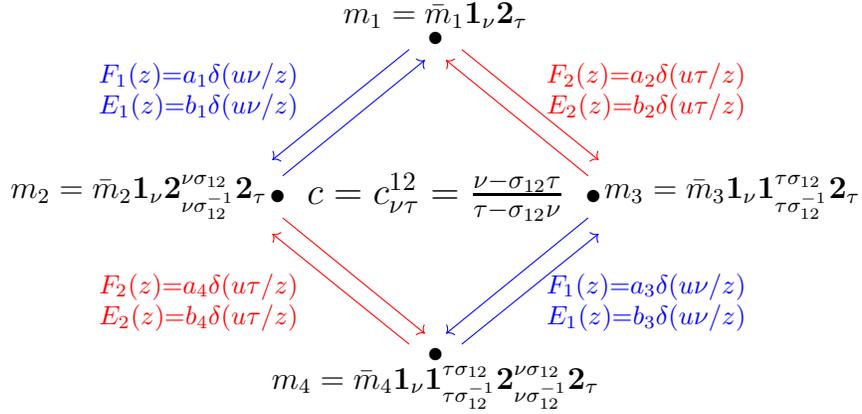
\begin{figure}[H]
\begin{center}
\begin{tikzpicture}[scale=0.7]
\coordinate  (A) at  (0,3);
\node [above] at (A) {$m_1=\bar m_1\bs 1_\nu \bs 2_\tau$};
\node at  (A) {$\bullet$};

\coordinate  (B1) at  (-3,0);
\coordinate  (B2) at  (3,0);
\node [left] at (B1) {$m_2=\bar m_2\bs 1_\nu \bs 2_{\nu\sigma_{12}^{-1}}^{\nu\sigma_{12}}\bs 2_\tau$};
\node at  (B1) {$\bullet$};
\node [right] at (B2) {$m_3=\bar m_3\bs 1_\nu \bs 1_{\tau\sigma_{12}^{-1}}^{\tau\sigma_{12}}\bs 2_\tau$};
\node at  (B2) {$\bullet$};

\coordinate  (C) at  (0,-3);
\node [below] at (C) {$m_4=\bar m_4\bs 1_\nu \bs 1_{\tau\sigma_{12}^{-1}}^{\tau\sigma_{12}}\bs 2_{\nu\sigma_{12}^{-1}}^{\nu\sigma_{12}}\bs 2_\tau$};
\node at  (C) {$\bullet$};

\draw[blue, <-] (-0.2, 2.6)-- (-2.9,0.4);
\draw[blue, ->] (-0.5,2.8)-- (-3.1,0.7);
\draw[red, <-] (0.2,2.6)-- (2.9,0.4);
\draw[red, ->] (0.5,2.8)-- (3.1,0.7);
\node[red] at (4,2) {\large $F_2(z)=a_2\delta(u\tau/z)\atop E_2(z)=b_2\delta(u\tau/z)$};
\node[blue] at (-4.5,2) {\large $F_1(z)=a_1\delta(u\nu/z)\atop E_1(z)=b_1\delta(u\nu/z)$};
\draw[red, <-] (-3.1,-0.7)--(-0.5,-2.8);
\draw[red, ->] (-2.9,-0.4)--(-0.2, -2.6);
\draw[blue, <-] (3.1,-0.7)-- (0.5, -2.8);
\draw[blue, ->] (2.9,-0.4)-- (0.2, -2.6);
\node[blue] at (4,-2) {\large $F_1(z)=a_3\delta(u\nu/z)\atop E_1(z)=b_3\delta(u\nu/z)$};
\node[red] at (-4.5,-2) {\large $F_2(z)=a_4\delta(u\tau/z)\atop E_2(z)=b_4\delta(u\tau/z)$};
\node at (-0,0) {\large $c=c_{\nu\tau}^{12}=\frac{\nu-\sigma_{12}\tau}{\tau-\sigma_{12}\nu}$};
\end{tikzpicture}
\end{center}
\caption{The action in the square. }\label{square action pic}
\end{figure}
Here $a_i$ are provided by the values of the supplement $a$ on the corresponding edges and the values of $b_i$ are given by
$$
b_1=\mathop{\res}_{z=\nu u} f_1(m_1)/a_1, \qquad b_3=\mathop{\res}_{z=\nu u} f_1(m_3)/a_3,
$$
$$
b_2=\mathop{\res}_{z=\tau u} f_2(m_1)/a_2, \qquad b_4=\mathop{\res}_{z=\tau u} f_2(m_2)/a_4.
$$
This assures that the commutators $E_1(z)F_1(w)+F_1(w)E_1(z)$ and  $E_2(z)F_2(w)+F_2(w)E_2(z)$ coincide with terms $\delta(u\nu/z)$ in $K_1^+(z)-K_1^-(z)$ and $\delta(u\tau/z)$ in $K_2^+(z)-K_2^-(z)$, respectively, applied to $m_i$. 

Note that
$$
\mathop{\res}_{z=\nu u} f_1(m_3)=-c\mathop{\res}_{z=\nu u} f_1(m_1), \qquad \mathop{\res}_{z=\tau u} f_2(m_1)=-c\mathop{\res}_{z=\tau u} f_2(m_2),
$$
and that $a_2a_3=ca_1a_4$ by the definition of supplement.
Therefore, we have
$$
\frac{a_2a_3}{a_1a_4}=\frac{b_3b_2}{b_1b_4}=c, \qquad \frac{b_2a_1}{a_3b_4}= \frac{b_1a_2}{a_4b_3}=-1.
$$

The first two relations are equivalent to $FF$ and $EE$ relations applied to $m_1$ and $m_4$ respectively. The last two relations are equivalent to  $E_2(z)F_1(w)+F_1(w)E_2(z)=0$ and  $E_1(z)F_2(w)+F_2(w)E_1(z)=0$ applied to $m_3$ and $m_2$ respectively.

Due to our assumption, one of the vertices of a square can be missing only if $\nu=\tau\sigma_{ij}^{\pm1}$ in which case the relations are satisfied for trivial reasons.
For example, let $\nu=\tau\sigma_{12}$ and then $m_2$ is missing since we cannot have $(\bs 2_\tau)^2$.
In such a case, for example, $F_2(w)F_1(z)m_1$ does not produce $m_4$. But $g_{12}(w,z)F_1(z)F_2(w)m_1$ does not produce $m_4$ either since  $(z-\sigma_{12}w)\delta(u\nu/z)\delta(u\tau/w)=0$.

\end{proof}

Clearly, if the graph of $\chi$ used to construct $M_{\chi,u}$ is connected then $M_{\chi,u}$ is a simple $U_C$-module. Indeed, since all monomials are distinct, generators $K_i^{\pm}(z)$ have a joint simple spectrum, therefore any submodule $M$ has a basis of monimial vectors $v_m$. But if one of $v_m$ is in $M$, then all of them are in $M$ and thus
$M=M_{\chi,u}$. In particular, if $\chi$ is a simple $qq$-character, then $M_{\chi,u}$ is a simple $U_C$-module.

If $\chi$ is an admissible untangled $qq$-character then $q=1$ specialization of the $qq$-character $\chi$ is the $q$-character of $M_{\chi,u}$. The algorithm for construction of $qq$-characters described in Section \ref{sec algorithm} specializes to the algorithm for construction of $q$-characters in \cite{FM}. 

\subsection{The case of D$(2,1;\alpha)$}
Very little is known about the representation theory of quantum affine algebra of type D$(2,1;\alpha)$ and its toroidal version. We give a few remarks.

 Apart from the trivial one dimensional module, the irreducible representations of exceptional Lie superalgebra D$(2,1;\alpha)$ are parameterized by triples $(a,b,c)\in\Z_{\geq 0}$.
The even part of D$(2,1;\alpha)$ is $\sln_2\oplus \sln_2 \oplus \sln_2$. We denote by $[a,b,c]$  the irreducible $\sln_2\oplus \sln_2 \oplus \sln_2$-module of dimension $(a+1)(b+1)(c+1)$ which is a tensor product of three irreducible $\sln_2$ modules of highest weight $a$, $b$, and $c$.

When restricted to the even subalgebra, a generic finite-dimensional representation $L_{(a,b,c)}$  corresponding to triple $(a,b,c)$ has the following structure, see \cite{J}.

Suppose $a,b,c$ are not all equal. This is a typical case when the Kac module is irreducible. Then we have
\begin{align*}
L_{(a,b,c)}= 2[a,b,c] +\hspace{-5pt}\sum_{\epsilon\in\{-1,1\}}([a+2\epsilon,b,c]+[a,b+2\epsilon,c]+[a,b,c+2\epsilon])+\hspace{-10pt}\sum_{\epsilon_1,\epsilon_2\epsilon_3\in\{-1,1\}} \hspace{-5pt} [a+\epsilon_1,b+\epsilon_2,c+\epsilon_2].
\end{align*}
Here all terms with a negative component must be omitted. In addition, the multiplicity 2 of $[a,b,c]$ is replaced with 1 if exactly one of the numbers $a,b,c$ is 0 and with 0 if two of them are zero.

Of course, we have the symmetry permuting $a,b$ and $c$.

\medskip

Let now $a=b=c$. This is an atypical case. We have 
\begin{align*}
L_{(a,a,a)}= & ([a,a,a] +[a+2,a,a]+[a,a+2,a]+[a,a,a+2])\\ +&([a+1,a+1, a+1]+[a+1,a,a]+[a,a+1,a]+[a,a,a+1]),
\end{align*}
where the term $[a,a,a]$ has to be dropped if $a=0$.

\medskip

We work with the Dynkin diagram where all three roots are fermionic. These roots change the weight with respect to  algebra
$\sln_2\oplus \sln_2 \oplus \sln_2$ by $(1,-1,-1),(-1,1,-1)$ and $(-1,-1,1)$. In both typical and atypical cases, the highest weight vector of $L_{(a,b,c)}$ is the highest weight vector of $[a+1,b+1,c+1]$. 

Algebra D$(2,1;\alpha)$ also has three distinguished choices of Borel subalgebra with one fermionic simple root and two bosonic ones. For the three distinguished Borel subalgebras, the highest weight vector of 
$L_{(a,b,c)}$ is the highest weight vector of $[a+2,b,c]$ or  $[a,b+2,c]$ or $[a,b,c+2]$.

 The parity of highest weight vector can be chosen. Let the parity of highest weight vector of the $L_{(a,b,c)}$ be chosen even in the distinguished case (and therefore odd in the fermionic case). We denote the module where the parity is reversed by $L_{(a,b,c)}^p$.

 Then we have the following graded dimensions. In the typical case,
\begin{align*}
\dim L_{(a,b,c)}=(8(a+1)(b+1)(c+1),8(a+1)(b+1)(c+1)).
\end{align*}

In the atypical case, 
$$
\dim L_{(a,a,a)}=(4(a+1)^3+6(a+1)^2-\delta_{a,0}, 4(a+1)^3+6(a+1)^2-2).
$$
In particular, for $a>1$, we have $\dim L_{(a,a,a)}+\dim L_{(a-1,a-1,a-1)}^p=(8(a+1)^3,8(a+1)^3)$ which is the graded character of the Kac module.

\medskip

The adjoint module $L_{0,0,0}$ has dimension 17 and in the principal gradation we have $17=1+3+3+3+3+3+1$. 

The next smallest module is $L_{1,0,0}$ which has dimension 32, which in the principal gradation is $32=1+3+4+5+6+5+4+3+1$.

We have the following decomposition:
$$
L_{(0,0,0)}\otimes L_{(0,0,0)}= L_{(2,0,0)}+L_{(0,2,0)}+L_{(0,0,2)}+L_{(1,1,1)}^p+L_{(0,0,0)},
$$
so, $17^2=48*3+128+17$.

\medskip

We construct the $18$-dimensional and $66$-dimensional representations of quantum affine algebra of of D$(2,1;\alpha)$ type.

For the $18$-dimensional module, it is enough to provide the supplement.
We set the value of the supplement on all edges in Figure \ref{18 pic}, to be 1 except 
\begin{itemize} 
\item edges on the second level (from vertices $v_2, v_3, v_4$ to vertices $v_5,v_6,v_7$) where we set the value of supplement to be $(q_i-q_i^{-1})^{-1}$ on edges of color $i$; 

\item edges on the second to the last level (from vertices $v_{12}, v_{13}, v_{14}$ to vertices $v_{15},v_{16},v_{17}$) where we set the value of supplement to be $(q_i-q_i^{-1})$ on edges of color $i$; 

\item on edges $(v_5,v_8)$, $(v_6,v_{11})$, $(v_7,v_{10})$ where we set the value of supplement to be $-1$.
\end{itemize}
It is easy to check that such an assignment is indeed a supplement.

To construct a $66$ dimensional module, we consider the tensor product $M_{\chi^{18},uq_1^2}\otimes M_{\chi^{18},u}$.
Comultiplication \eqref{coproduct} is not well-defined on this tensor product. 
However, it is well-defined on the 66 terms inside this tensor product listed in Figure \ref{66 pic}. Moreover, the action along all edges which connect a vertex from the 66 terms to some vertex which is not in the 66 is well-defined and either $F(z)$  or  $E(z)$ action along such an edge vanishes. Thus,
the 66 module is found as a subquotient.

\medskip

After restriction to finite type D$(2,1;\alpha)$ the above modules decompose as $18=17+1$ and $66=48+17+1$ and therefore can be thought as affinizations of $L_{(0,0,0)}$ and $L_{(2,0,0)}$. One can argue that these are minimal affinizations in the sense of \cite{C}. 

On the other hand, it is easy to see that there is no $q$-character which has the structure of the 32-dimensional $L_{(1,0,0)}$ module which suggests that $L_{(1,0,0)}$ has no finite-dimensional affinization in contrast to the even case where every finite-dimensional irreducible module of finite type had an affinization.

\section{Free field realization}\label{sec:freefield}
The $qq$-characters are a combinatorial abstraction of basic currents which appear in the free field construction of
deformed $W$-algebras. 
We are now in a position to elucidate this connection. 

From now on, we regard the parameters $q,q_1,q_2,\ldots$ as non-zero complex numbers.

\subsection{Vertex operators}\label{subsec:VO}
Fix a Cartan matrix $C=\bigl(\sigma_{ij}-\sigma_{ij}^{-1}\bigr)_{i,j\in I}$.
We introduce parameters $k_{ij}\in\C$ by setting $\sigma_{ij}=q^{k_{ij}}$, 
where $k_{ji}=k_{ij}$, $k_{ii}=1$. 
We assume that the matrix $C^{[n]}=\bigl(q^{n k_{ij}}-q^{-n k_{ij}}\bigr)_{i,j\in I}$ is non-degenerate
for all $n\neq0$. 

Consider a Heisenberg algebra with generators $\{\sss_{i,n}\mid i\in I,\ n\in\Z\backslash\{0\}\}$ 
subject to the commutation relations 
\begin{align}
&[\sss_{i,n},\sss_{j,m}]=
-\frac{1}{n}\frac{q^{n k_{ij}}-q^{-n k_{ij}}}{q^n-q^{-n}}
\, \delta_{n+m,0}\quad (n,m\neq 0) \,.\label{sin}
\end{align}
We use also a dual set of generators $\{\ssy_{i,n}\mid i\in I\ n\in \Z\backslash\{0\}\}$  
\begin{align*}
&\ssy_{i,n}=-\sum_{l\in I}(q^n-q^{-n})({C^{[n]}}^{-1})_{i,l}\sss_{l,n}\,\quad (n\neq 0), 
\end{align*}
so that 
\begin{align}
&[\sss_{i,n},\ssy_{j,m}]=\frac{1}{n}\delta_{i,j}\delta_{n+m,0}\quad (n,m\neq 0)\,.\label{yin}
\end{align}

The matrix $K=\bigl(k_{ij}\bigr)_{i,j\in I}$ may be degenerate in general. 
We proceed as in the case of Kac-Moody algebras \cite{K}. 
Let $\ell=\mathrm{rank}\,K$. 
Choose a decomposition $I=I_0\sqcup I_1$ 
and corresponding submatrices $K_{a,b}=\bigl(k_{ij}\bigr)_{i\in I_a,j\in I_b}$, $a,b\in\{0,1\}$, 
in such a way that $K_{1,1}$ is an $\ell\times \ell$ matrix of rank $\ell$. 
Let $I_0^*=\{i^*\mid i\in I_0\}$ be a copy of $I_0$. 
We extend $K$ to a non-degenerate matrix indexed by 
$\hat{I}=I_0\sqcup I_1\sqcup I_0^*$, 
\begin{align*}
\widehat{K}
=\begin{pmatrix}
K_{0,0} & K_{0,1} & -1 \\
K_{1,0} & K_{1,1} & 0 \\
-1 & 0 & 0\\
\end{pmatrix}\,,
\end{align*}
where $1$ stands for the $(n-\ell)\times (n-\ell)$ unit matrix. 
Consider a Heisenberg algebra with generators $\{\sss_{i,0}, Q_{\sss_i}\}_{i\in \hat{I}}$ such that
\begin{align*}
&[\sss_{i,0},\sss_{j,0}]=[Q_{\sss_i},Q_{\sss_j}]=0\,,
\quad 
[\sss_{i,0},Q_{\sss_j}]=(\widehat{K})_{i,j}\quad (i,j\in \hat{I}).
\end{align*}
We define $\ssy_{i,0}$, $Q_{\ssy_i}$ for $i\in I=I_0\sqcup I_1$ by 
\begin{align*}
&\ssy_{i,0}
=\begin{cases}
\sss_{i^*,0} & (i\in I_0),\\
-\sum_{k\in \hat{I}}(\widehat{K}^{-1})_{i,k}s_{k,0} & (i\in I_1),\\ 
\end{cases}
\\
&Q_{\ssy_i}=
\begin{cases}
Q_{\sss_{i^*}} & (i\in I_0),\\
-\sum_{k\in \hat{I}}(\widehat{K}^{-1})_{i,k}Q_{s_k}&(i\in I_1).\\
\end{cases}
\end{align*}
Then the following hold for all $i,j \in I$:
\begin{align*}
[\sss_{i,0},Q_{\ssy_j}]=-\delta_{i,j}\,,\quad [\ssy_{j,0},Q_{\sss_i}]=-\delta_{i,j}\,.
\end{align*}

We shall refer to $\sss_{i,n}, \ssy_{i,n}$ ($n\neq 0$) as oscillators, and 
$\sss_{i,0},\ssy_{i,0},Q_{\sss_i},Q_{\ssy_i}$ as zero modes.

Notation being as above, we introduce the following vertex operators:
\begin{align}
&S_i(z)=e^{Q_{\sss_i}}z^{\sss_{i,0}}:e^{\sum_{n\neq0}\sss_{i,n}z^{-n}}:\,,\label{Si}
\\
&Y_i(z)=e^{Q_{\ssy_i}}z^{\ssy_{i,0}}:e^{\sum_{n\neq0}\ssy_{i,n}z^{-n}}:\,,\label{Yi}
\\
&A_i(z)=:\frac{S_i(q^{-1}z)}{S_i(q z)}:=
q^{-2\sss_{i,0}}:e^{\sum_{n\neq0}(q^n-q^{-n})\sss_{i,n}z^{-n}}:\,.
\label{Ai}
\end{align}
Here the standard normal ordering rule is in force: 
creation operators $\sss_{i,-n}$, $\ssy_{i,-n}$  
($n>0$), $e^{Q_{\sss_i}}$, $e^{Q_{\ssy_i}}$
are placed to the left and 
annihilation operators $\sss_{i,n}$, $\ssy_{i,n}$    
($n>0$), $\sss_{i,0}$,  $\ssy_{i,0}$ 
are to the right.
We call $A_i(z)$ affine root currents, and $S_i(z)$ screening currents. 

Quite generally, a product of two vertex operators $V(z), W(z)$ takes the form 
\begin{align*}
V(z)W(w)=z^{\alpha}\varphi_{V,W}(w/z):V(z)W(w):, 
\end{align*}
where $\alpha\in\C$, and $\varphi_{V,W}(w/z)$ is a formal power series in $w/z$. 
We call $z^{\alpha}\varphi_{V,W}(w/z)$  the contraction of $V(z), W(z)$ and use the symbol $\cont{V(z)}{W(w)}$
to denote it. For instance 
\begin{align}
&\cont{S_i(z)}{S_i(w)}=z-w\,,\label{contSS}
\\
&\cont{S_i(z)}{Y_i(w)}=\frac{1}{z-w}\,,
\quad \cont{Y_i(w)}{S_i(z)}=\frac{1}{w-z}\,,\label{contSY}
\\
&\cont{S_i(z)}{Y_j(w)}=\cont{Y_j(w)}{S_i(z)}=1\quad (i\neq j).
\label{contSY2}
\end{align}
In all cases considered in this paper, the series  $\varphi_{V,W}(w/z)$ can be written as 
\begin{align*}
\log\varphi_{V,W}(w/z)=\sum_{n>0}\frac{1}{n}\Bigl(\frac{w}{z}\Bigr)^n\times
\bigl(f_{V,W}\Bigl|_{q_{i}\to q_{i}^n,\forall i}\bigr)\,,
\end{align*}
where  $f_{V,W}$ is a rational function of the parameters $q_{i}$ entering the Cartan matrix. 
We denote the function $f_{V,W}$  by $\mathcal{C}(V,W)$, and use it  as a mnemonic for $\varphi_{V,W}(w/z)$. 
In this notation we have
\begin{align}
&\mathcal{C}(S_i,S_j)=-\frac{1}{q-q^{-1}}(\sigma_{ij}-\sigma_{ij}^{-1})\,,\notag\\
&\mathcal{C}(S_i,Y_j)=\mathcal{C}(Y_j,S_i)=\,\delta_{ij}\,,\notag
\\
&\mathcal{C}(A_i,A_j)=(q-q^{-1})(\sigma_{ij}-\sigma_{ij}^{-1})\,,\label{AA}
\\
&\mathcal{C}(A_i,Y_j)=-\mathcal{C}(Y_j,A_i)
=(q-q^{-1})\delta_{i,j}\,,\label{AY}
\\
&\mathcal{C}(Y_i,Y_j)=-(q-q^{-1})(C^{-1})_{i,j}\,.\notag
\end{align}
In the last line we use the entries of the inverse $C^{-1}$ of the Cartan matrix.

\subsection{Bosonization of $qq$-characters}\label{subsec:bosonization}

With every monomial $m\in\mathcal Y$, $m=\prod_{i\in I}\prod_{a\in\C}Y^{n_{i,a}}_{i,a}$, a finite product
in the variables $\{Y_{i,a}\}$, $n_{i,a}\in \Z$, we associate a vertex operator 
\begin{align*}
V_m(z)=:\prod_{i\in I}\prod_{a\in\C}Y_{i}(a z)^{n_{i,a}}:\,.
\end{align*}
Let $m$ be a generic monomial of degree zero. 
Due to \eqref{contSY}, \eqref{contSY2},  
the contractions with screening currents $\cont{S_i(w)}{V_{m}(z)}$, $\cont{V_{m}(z)}{S_i(w)}$
depend only on the restriction $\rho_{\{i\}}(m)$. 
Moreover they converge to the same rational function whose poles are all simple. 
It follows that the commutator is a finite sum 
\begin{align*}
[S_i(w),V_{m}(z)]=\sum_{a}c_{i,{m},a}w^{-1}\delta\Bigl(\frac{az}{w}\Bigr) \,:S_i(az)V_{m}(z): \,,
\end{align*}
where $c_{i,{m},a}\in\C$ are some coefficients and as before $\delta(z)=\sum_{n\in \Z}z^n$ stands for the delta function. 
Introduce the screening operator as a formal integral
\begin{align}
S_i=\int S_i(w)dw\,.\label{screening-charge}
\end{align}
Given a $qq$-character $\chi=\sum_s m_s$, 
we shall say that it formally commutes with the screening operator $S_i$
if $\sum_s \sum_a c_{i,m_s,a }:S_i(az)V_{m_s}(z):=0$. We write this as $[ S_i,\chi]=0$. 
\medskip

\noindent{\it Example.}\quad 
Consider an elementary block of length $k+1$ in a single color, 
\begin{align*}
\chi=\sum_{j=0}^k m_k\,,\quad m_j=\mathbf{1}^{a_1,\dots,a_{k}}_{a,\ldots,\widehat{q^{2j}a},\ldots,q^{2k}a}
=m_0\prod_{s=1}^j A^{-1}_{1,q^{2s-1}a}
\,,
\end{align*}
where the hat signifies the missing factor.
By \eqref{Ai}, $V_{m_j}(z)$ can be written in terms of screening currents as 
$V_{m_j}(z)=:V_{m_0}(z)S_1(q^{2j}az)S_1(az)^{-1}:$.
Computing the residues we find 
\begin{align*}
&[S_1(w),V_{m_j}(z)]=\sum_{0\le i\le k\atop i\neq j}(q^{2i}a-q^{2j}a) c_{i}\,
w^{-1}\delta\Bigl(\frac{q^{2i}az}{w}\Bigr)
:V_{m_0}(z)\frac{S_1(q^{2i}az)S_1(q^{2j}az)}{S_1(az)}:\,,
\end{align*}
where
$c_{i}=\prod_{r=1}^k(q^{2i}a-a_r)/\prod_{0\le s\le k, s\neq i}(q^{2i}a-q^{2s}a)$. 
We define 
\begin{align*}
T_\chi(z)=\sum_{j=0}^{k}c_j\,V_{m_j}(z)\,.    
\end{align*}
In the product $S_1(w)T_\chi(z)$,  
the residue at $w=q^{2i}az$ coming from $S_1(w)V_{m_j}(z)$   
cancels with the one at $w=q^{2j}az$ coming from $S_1(w)V_{m_i}(z)$, for all pairs $i\neq j$.       
This means that $T_\chi(z)$ formally commutes with the screening operator 
$S_1$. 
\qed
\medskip

This example generalizes as follows. 

\begin{thm}\label{prop:qq-coeff}
Let $\chi=\sum_s m_s$ be a finite simple $qq$-character considered in Section 2.2. Then there exist coefficients $c_m\in \C^{\times}$
such that the corresponding current 
\begin{align}
T_\chi(z)=\sum_s c_{m_s} V_{m_s}(z)\,\label{Tchi}
\end{align}
formally commutes with all screening operators $S_i=\int S_i(w)dw$, $i\in I$. 
The  $c_{m_s}$'s are unique up to an overall scalar multiple.
\end{thm}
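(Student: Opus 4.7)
The plan is to recast the condition $[S_i, T_\chi(z)] = 0$ as a system of linear ratio equations on $\{c_m\}$ indexed by the edges of the graph of $\chi$, and to exploit the block structure together with the simplicity of $\chi$ to establish both consistency and uniqueness of the solution.

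First I compute $[S_i(w), V_m(z)]$ explicitly for each color $i$ and each monomial $m\in\chi$. Tameness of $\chi$ together with \eqref{contSS}--\eqref{contSY2} ensures that $\cont{S_i(w)}{V_m(z)}$ has only simple poles, located at $w=\tau z$ for each positive $Y_{i,\tau}$ appearing in $m$, so the commutator is a finite sum of $\delta$-functions with explicit nonzero residues. The key combinatorial identification is that, for an edge $m\xrightarrow{i,\sigma}m'$ of the graph of $\chi$, the operator $:S_i(\sigma q z)V_m(z):$ produced by the dominant variable $Y_{i,\sigma q}$ of $m$ coincides, up to a fermionic sign, with $:S_i(\sigma q^{-1}z)V_{m'}(z):$ produced by the anti-dominant variable $Y_{i,\sigma q^{-1}}$ of $m'$. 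This follows from $V_{m'}(z) = :V_m(z)A_i(\sigma z)^{-1}:$ together with the identity $:A_i(\sigma z)S_i(\sigma q z): = \pm\,:S_i(\sigma q^{-1}z):$ read off from \eqref{Ai}. Genericity of the monomials ensures that these pairings exhaust the basic operators appearing in $[S_i, T_\chi(z)]$ and that the matched pairs are all distinct, so vanishing of the commutator becomes a ratio equation $c_{m'}/c_m = -R^+(m,i,\sigma)/R^-(m',i,\sigma)$, one per edge of color $i$, with $R^\pm$ the explicit nonzero residues.

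Simplicity of $\chi$ means its graph is connected, so once $c_{m_0}$ is fixed at a base monomial the edge equations determine all other $c_m$ by propagation along a spanning tree; the remaining task is to verify consistency around every cycle. For a cycle contained in a single color-$i$ product of mutually generic blocks, consistency reduces to the single-block computation recalled immediately before the theorem, where the explicit coefficients $c_j \propto \prod_r (q^{2j}a - a_r)/\prod_{s\neq j}(q^{2j}a - q^{2s}a)$ manifestly satisfy all pairwise cancellations and multiply over mutually generic blocks by the Leibniz rule. For a two-color cycle around a square $\{m,m',m'',m'''\}$ with colors $i,j$ and parameters $\sigma,\tau$, consistency reduces, after extracting the common normal-ordered factor, to a residue identity whose verification uses \eqref{AY}--\eqref{AA} and the fact that $A^{-1}_{j,\tau}$ alters the $Y_i$-content of a monomial only through the factors $Y_{i,\sigma_{ij}^{\pm 1}\tau}^{\mp 1}$, contributing exactly the square-constant $c^{ij}_{\nu\tau}$ introduced earlier.

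The main obstacle is precisely this cross-color square consistency: it requires tracking how the residue $R^+(m,i,\sigma)$ depends on all other $Y_i$-factors of $m$ and verifying that the change induced by multiplication by $A^{-1}_{j,\tau}$ produces exactly the compensating factor. Once this is checked, uniqueness of $\{c_m\}$ up to overall scaling is immediate: any two solutions have pointwise ratios locally constant along each edge, hence globally constant by connectedness of the graph.
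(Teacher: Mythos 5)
Your overall skeleton — reduce $[S_i,T_\chi(z)]=0$ to one ratio equation per edge of the graph, fix $c_{m_0}$ at a base monomial, propagate along a spanning tree using connectedness (simplicity), and then check consistency around cycles — is the same as the paper's, where the edge equations appear as \eqref{neighbor} and the whole content of the proof is the path-independence of the ratio $d_l/d_0$. But your consistency step has a genuine gap. You verify cycles only of two kinds: cycles inside a single-color product of blocks, and complete two-color squares, and you implicitly assume that every cycle of the graph decomposes into such elementary cycles \emph{within the graph}. That is not justified: when $\nu=\tau\sigma_{ij}^{\pm1}$ the fourth vertex of a would-be square is absent (the paper itself introduces ``incomplete squares''), and more generally two paths joining the same pair of monomials differ by an arbitrary permutation of the affine-root factors whose intermediate monomials need not lie in $\chi$, so one cannot compare them square by square inside the graph. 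These degenerate coincidences $\sigma_{i_s,i_t}=q^{\pm1}a_s/a_t$ are exactly where the naive swap argument breaks down (factors of the residues vanish at the specialization points), and they are the reason the paper compares two arbitrary connecting sequences directly at the level of the explicit product formula \eqref{standard}, with the bookkeeping of vanishing orders $l^{\pm}_s$ and coincidence counters $N^{\pm}_{s,t}$ in \eqref{lr_Nrs}--\eqref{Nst} needed to show the discrepancy \eqref{d-ratio} equals $1$. Your proposal never engages with this, so the crucial case is unproved.

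Two smaller points. First, the compensating factor across a complete square in the deformed setting is not the constant $c^{ij}_{\nu\tau}=(\nu-\sigma_{ij}\tau)/(\tau-\sigma_{ij}\nu)$ of Section \ref{sec q char}: that constant arises only after the specialization $q=1$ used for the $U_C$-modules; here the residue ratio involves evaluations of $(w-\sigma_{ij}^{\mp1}\tau z)/(w-\sigma_{ij}^{\pm1}\tau z)$ at $w=\nu q^{\pm1}z$, so $q$ does not drop out. Second, note that within a color-$i$ block of length $\ge 3$ the cancellation constraints are indexed by \emph{all} pairs of monomials of the block, not only by adjacent ones, so one must also know (as in the Example preceding the theorem) that the edge equations already force these extra pairwise cancellations; this should be stated rather than absorbed silently into ``the single-block computation''. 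The uniqueness claim, by contrast, is fine: once existence is established, connectedness of the graph and nonvanishing of the residues determine all ratios.
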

\begin{proof}
Let $\{m_s\}_{s=0,1,\ldots,l}$ be a sequence of monomials in $\chi$ such that 
$m_s{\xrightarrow {i_s,a_s}}m_{s-1}$ with some $i_s\in I$, $a_s\in \C^{\times}$,  $s=1,\ldots,l$.
Consider the linear equations for unknowns $\{d_s\}_{s=0,\ldots,l}$:
\begin{align}
d_{s-1}\Res_{w=a_s qz}\cont{S_{i_s}(w)}{V_{m_{s-1}}(z)}dw
+d_s\Res_{w=a_s q^{-1}z}\cont{S_{i_s}(w)}{V_{m_s}(z)}dw=0\,,
\quad s=1,\ldots,l\,.
\label{neighbor}
\end{align} 
We show below that the ratio $d_l/d_0$ is determined by $m_0$ and $m_l$ alone, 
independently of the choice of the sequence $\{m_s\}$ connecting them. 
Since the graph of $\chi$ is connected, 
Theorem will then follow by setting $c_{m_l}/c_{m_0}=d_l/d_0$.

Let us take a closer look at the equation \eqref{neighbor}. 
Introducing $f_i(w)=\cont{S_i(w)}{V_{m_0}(z)}$ ($i\in I$), we have 
\begin{align*}
&\cont{S_{i_s}(w)}{V_{m_{s-1}}(z)}=f_{i_s}(w)\prod_{t=1}^{s-1}\frac{w-\sigma_{i_s,i_t}a_tz}{w-\sigma^{-1}_{i_s,i_t}a_tz}\,,\\
&\cont{S_{i_s}(w)}{V_{m_s}(z)}=\cont{S_{i_s}(w)}{V_{m_{s-1}}(z)}\frac{w-qa_sz}{w-q^{-1}a_s z}\,,
\end{align*}
and hence 
\begin{align}
&\Res_{w=a_s qz}\cont{S_{i_s}(w)}{V_{m_{s-1}}(z)}dw=
\Bigl\{(qw-qa_sz)f_{i_s}(qw)\prod_{t=1}^{s-1}
\frac{qw-\sigma_{i_s,i_t}a_tz}{qw-\sigma^{-1}_{i_s,i_t}a_tz}\Bigr\}\Bigl|_{w=a_sz}\,,
\label{Res+}\\
&\Res_{w=a_s q^{-1}z}\cont{S_{i_s}(w)}{V_{m_s}(z)}dw=
\Bigl\{(q^{-1}w-qa_sz)f_{i_s}(q^{-1}w)\prod_{t=1}^{s-1}
\frac{q^{-1}w-\sigma_{i_s,i_t}a_tz}{q^{-1}w-\sigma^{-1}_{i_s,i_t}a_tz}\Bigr\}\Bigl|_{w=a_sz}\,.
\label{Res-}
\end{align}
Here we have to be careful when some of the factors vanish at $w=a_sz$. 
For all pairs $s\neq t$, define
\begin{align*}
N^{\pm}_{s,t}=
\begin{cases}
1 & \text{if $\sigma_{i_s,i_t}=q^{\pm1}a_s/a_t$},\\
0 & \text{if $\sigma_{i_s,i_t}\neq q^{\pm1}a_s/a_t$}.\\
\end{cases}
\end{align*}
Let further $l^\pm_s$ be the order of zeroes of $(q^{\pm1}w-qa_sz)f_{i_s}(q^{\pm1}w)$ at $w=a_sz$. 
The relation $m_{s-1}\overset{i_s,a_s}{\longrightarrow} m_s$ ensures that \eqref{Res+} and \eqref{Res-} are both well-defined and non-zero.
Hence we must have 
\begin{align}
l^{\pm}_s+\sum_{t=1}^{s-1}(N^\pm_{s,t}-N^\mp_{t,s})=0\,,
\quad s=1,\ldots,l
\,. 
\label{lr_Nrs}
\end{align}
Under this condition, the ratios $d_s/d_{s-1}$ are well defined.
We rewrite further the right hand side of \eqref{Res+} as 
\begin{align*}
\Bigl\{(qw_s-qa_sz)f_{i_s}(qw_s)\prod_{t=1}^{s-1}
\frac{qw_s-\sigma_{i_s,i_t}w_t}{qw_s-\sigma^{-1}_{i_s,i_t}w_t}\Bigr\}\Bigl|_{w_1=a_1z}\cdots\Bigl|_{w_{s-1}=a_{s-1}z}\Bigl|_{w_s=a_sz}
\,.
\end{align*}
Doing the same for  \eqref{Res-}, and multiplying  $d_s/d_{s-1}$ through $s=1,\ldots,l$, we arrive at 
\begin{align}
&\frac{d_l}{d_0}=(-1)^l
\Bigl\{\prod_{s=1}^lF_s(w_s)
\prod_{s\neq t}\frac{qw_s-\sigma_{i_s,i_t}w_t}{q\sigma_{i_s,i_t}w_s-w_t}\Bigr\}\Bigl|_{w_1=a_1z,\ldots,w_l=a_lz}\,,
\label{standard}\\
&F_s(w)=\frac{(qw-qa_sz)f_{i_s}(qw)}{(q^{-1}w-qa_sz)f_{i_s}(q^{-1}w)}\,,\nn
\end{align}
where the specialization $w_s=a_sz$ is performed in the order $s=1,2,\ldots,l$. 

Now let $\{m_s'\}_{s=0,1,\ldots,{l'}}$ be another sequence of monomials in $\chi$ 
such that $m'_0=m_0$, $m'_{l'}=m_l$ and $m'_s=m'_{s-1}A^{-1}_{i'_s,a'_s}$, $s=1,\ldots,l'$.  
Since the affine roots are algebraically independent, we must have that $l'=l$ and 
$i'_s=i_{\lambda(s)}$, $a'_s=a_{\lambda(s)}$ for some permutation $\lambda\in \Sb_l$.
Define $\{d'_s\}_{s=0,\ldots,l}$, $l^{'\pm}_s$, $N^{'\pm}_s$ similarly as above, using $\{m'_s\}$. 
Then we have $l^{'\pm}_s=l^{\pm}_{\lambda(s)}$, $N^{'\pm}_s=N^{\pm}_{\lambda(s)}$, 
and 
$d'_\ell/d'_0$ is given by the same expression \eqref{standard} except that the specialization is performed in the order
$w_{\lambda(1)}=a_{\lambda(1)}z,\ldots, w_{\lambda(l)}=a_{\lambda(l)}z$.

We consider the ratio of  $d_l/d_0$ to $d'_l/d'_0$.
For the two ways of specialization, the factors $F_s(w_s)$ give the same contribution and hence 
cancel out. The factors $w_s-q^{\pm1}\sigma_{i_s,i_t}w_t$ also cancel except in the cases 
$q^{\pm1}\sigma_{i_s,i_t}=a_s/a_t$ and $t<s$, $\lambda^{-1}(t)>\lambda^{-1}(s)$
or  $t>s$, $\lambda^{-1}(t)<\lambda^{-1}(s)$. 
With the abbreviation $s'=\lambda^{-1}(s)$, $t'=\lambda^{-1}(t)$ we find
\begin{align}
\frac{d_l/d_0}{d_l'/d_0'}=\prod_{t<s\atop t'>s'}
\Bigl(-\frac{a_s}{a_t}\Bigr)^{N^-_{s,t}-N^+_{t,s}-N^+_{s,t}+N^-_{t,s}}=\varepsilon\prod_{s}a_s^{\nu_s}\,,
\,.\label{d-ratio}
\end{align}
with $\varepsilon=\pm1$ and $\nu_s\in\Z$. The power $\nu_s$ is given by
\begin{align*}
\nu_s=
\sum_{t:t<s \atop t'>s'}(N^-_{s,t}-N^+_{t,s})-\sum_{t:t<s\atop t'>s'}(N^+_{s,t}-N^-_{t,s})
+\sum_{t:t>s\atop t'<s'}(N^+_{s,t}-N^-_{t,s})-\sum_{t:t>s\atop t'<s'}(N^-_{s,t}-N^+_{t,s})\,.
\end{align*}
Due to the equality
\begin{align}
\sum_{t: t<s}(N^{\mp}_{s,t}-N^{\pm}_{t,s})
=-l^{\mp}_s   
=\sum_{t: t'<s'}(N^{\mp}_{s,t}-N^{\pm}_{t,s})
\,,\quad 
s=1,\ldots,l\,, 
\label{Nst}
\end{align}
following from \eqref{lr_Nrs} and its analog for $\{m'_s\}$, we obtain $\nu_s=0$.
Summing \eqref{Nst} 
over $s$ we obtain also
\begin{align*}
\sum_{t<s\atop t'>s'}(N^{-}_{s,t}-N^{+}_{t,s})=
\sum_{t>s \atop t'<s'}(N^{-}_{s,t}-N^{+}_{t,s})=
\sum_{t<s\atop t'>s'}(N^{-}_{t,s}-N^{+}_{s,t})\,,
\end{align*}
which shows that 
$\varepsilon=1$.
We thus conclude that $d'_l/d'_0=d_l/d_0$. 
\end{proof}

We shall say that $T_{\chi}(z)$ is the $qq$-current associated with $\chi$. 

\medskip

\noindent{\it Remark.}
While Theorem \ref{prop:qq-coeff} claims only exiestence of coefficients $c_m$, formula \eqref{standard} provides a way to compute them. In particular, all the coefficients naturally appear in a factorized form. \qed

\medskip

\noindent{\it Remark.}\quad 
In conformal field theory, the usual screening currents are Virasoro primary fields of conformal weight one,
and their integrals commute with the Virasoro current.  For a general primary field  $S(w)$
of conformal weight $\Delta$, the integral $\int w^{\Delta}S(w) dw/w$ commutes with the grading operator $L_0$. 
When the conformal limit has a clear meaning, it is more natural to redefine
the screening operator \eqref{screening-charge} in this way. Such a change amounts to shifting the zero mode
$s_{i,0}$ by a constant. It affects only a power of $q$'s in the coefficients $c_m$ of $qq$-currents.
\qed

\subsection{Vector representation of D$(2,1;\alpha)$ and $\widehat{\textrm D}(2,1;\alpha)$}
In \cite{FJMV}, the $qq$-currents of vector $qq$-characters have been given  
for a class of deformed $W$-algebras including $\gl_{n,n}$,  $\gl_{n+1,n}$, 
and $\mathfrak{osp}_{n,n}$. 
In this section, 
we use parameters $k_i$ with $k_0=0$ and $k_1+k_2+k_3=-4$, such that 
$q_i=q^{k_i+1}$, $i=0,1,2,3$, $q_0q_1q_2q_3=1$. As before, $p_i=q_0^2q_i^2$.

According to the general rule, we have zero modes $\{s_{i,0}\}_{i=0}^4$ and $\{Q_{s_i}\}_{i=0}^4$, where 
$s_{4,0}=y_{0,0}$ and $Q_{s_4}=Q_{y_0}$.  Their commutators are given by the extended matrix
\begin{align*}
\Bigl([s_{i,0},Q_{s_j}]\Bigr)_{0\le i,j\le 4}=
\begin{pmatrix}
1 & k_1+1 & k_2+1 & k_3+1 & -1 \\
k_1+1 & 1 &k_3+1 & k_2+1 &  0 \\
k_2+1 & k_3+1 & 1& k_1+1 &  0 \\
k_3+1 & k_2+1 & k_1+1 &1 &  0 \\
-1 & 0 & 0 & 0 & 0\\
\end{pmatrix}\,.
\end{align*}
The remaining zero modes $\{y_{j,0}, Q_{y_j}\}$ ($j=1,2,3$) are given by
\begin{align*}
&y_{1,0}=y_{0,0}-\frac{1}{2(k_3+2)}(s_{1,0}+s_{2,0})-\frac{1}{2(k_2+2)}(s_{1,0}+s_{3,0})\,,\\
&Q_{y_1}=Q_{y_0}-\frac{1}{2(k_3+2)}(Q_{s_1}+Q_{s_2})-\frac{1}{2(k_2+2)}(Q_{s_1}+Q_{s_3})\,,
\end{align*}
and by cyclically permuting $1,2,3$. 

In what follows we shall assume that $|p_1|<1$. We use the standard symbols 
\begin{align*}
(z_1,\ldots,z_m;p)_k=\prod_{s=1}^m\prod_{j=0}^{k-1}(1-z_s p^j)\,,\quad
\Theta_p(z)=(z,p/z,p;p)_\infty\,.
\end{align*}
Following the remark at the end of the previous Section, 
we modify the screening operators as follows:
\begin{align}
S_i=\int w^{-2\delta_{i,0}}S_i(w)\, dw\quad (i=0,1,2,3).\label{D21-screening-charge}
\end{align}

The $qq$-currents associated with the vector $qq$-characters ${\chi}^{312}$, $\widehat{\chi}^{312}$,
see Figures \ref{D vector pic}, \ref{D hat vector pic},
are formal infinite sums of vertex operators $V^{312}_{a,b}(z)=V_{V^{312}_{a,b}}(z)$,
\begin{align}
T^{312}(z)&=
\sum_{a=0}^\infty c^{312}_{a,0} V^{312}_{a,0}(z)+\sum_{a=1}^\infty c^{312}_{a,1} V^{312}_{a,1}(z)\,,\label{T312formal}
\\
\widehat{T}^{312}(z)&=\sum_{a,b\in\Z\atop a\ge b}c^{312}_{a,b}V^{312}_{a,b}(z)\,.
\label{T312formal2}
\end{align}
Explicitly the coefficients $c^{312}_{a,b}$ described by Theorem \ref{prop:qq-coeff} are given by
\begin{align}
&c^{312}_{2k,2l}=\frac{(q_2^{-2}p_1^{k-l},p_1^{k-l+1};p_1)_\infty}{(q_0^{-2}q_2^{-2}p_1^{k-l},q_0^{-2}p_1^{k-l+1};p_1)_\infty}q_0^{4(k-l)}\,,
\label{c312-1}\\
&c^{312}_{2k,2l+1}=-\frac{(q_2^{-2}p_1^{k-l},p_1^{k-l};p_1)_\infty}{(q_0^{-2}q_2^{-2}p_1^{k-l},q_0^{-2}p_1^{k-l};p_1)_\infty}q_0^{4(k-l)}\,,
\label{c312-2}\\
&c^{312}_{2k+1,2l}=-\frac{(q_2^{-2}p_1^{k-l+1},p_1^{k-l+1};p_1)_\infty}{(q_0^{-2}q_2^{-2}p_1^{k-l+1},q_0^{-2}p_1^{k-l+1};p_1)_\infty}q_0^{4(k-l)+2}\,,
\label{c312-3}\\
&
c^{312}_{2k+1,2l+1}=\frac{(q_2^{-2}p_1^{k-l},p_1^{k-l+1};p_1)_\infty}
{(q_0^{-2}q_2^{-2}p_1^{k-l},q_0^{-2}p_1^{k-l+1};p_1)_\infty}q_0^{4(k-l)+2}\,.
\label{c312-4}
\end{align}
Note that $c^{312}_{a,b}=0$ unless $a\ge b$.

Formulas \eqref{c312-1}--\eqref{c312-4}
can be obtained by solving the recurrence relations sketched in the proof of Theorem \ref{prop:qq-coeff}. 
We give below a direct way to derive them. 

Recall that the monomials $V^{312}_{a,b}$ are composed of elementary pieces $R^\pm_1$, $T^\pm_1$, 
see \eqref{R}, \eqref{T}. 
Let 
\begin{align*}
\rho^\pm_1(z)=V_{R^\pm_{1}}(z)\,, \quad 
\tau^\pm_1(z)=V_{T^\pm_{1}}(z)
\end{align*}
be the corresponding vertex operators.  
Then the affine root currents are written as
\begin{align}
&A_0(z)=:\frac{\tau^+_1(p_1^{1/2}z)}{\tau_1^-(p_1^{-1/2}z)}:\,,\quad
A_1(z)=:\frac{\tau^-_1(z)}{\tau_1^+(z)}:\,,\label{A-tau-rho1}\\
&A_2(z)=:\frac{\rho^+_1(p_1^{1/2}z)}{\rho_1^-(p_1^{-1/2}z)}:\,,\quad
A_3(z)=:\frac{\rho^-_1(z)}{\rho_1^+(z)}:\,.\label{A-tau-rho2}
\end{align}
We shall use the contractions 
\begin{align}
&\cont{\tau^{\epsilon_1}_1(z)}{\rho^{\epsilon_2}_1(w)}=z^{-\frac{2}{k_1+2}}q_0^{\epsilon_1}
\frac{(q_0 q_2^{-1}w/z,p_1q_0q_2w/z;p_1)_\infty}
{(q_0^{-1}q_2^{-1}w/z,p_1q_0^{-1}q_2w/z;p_1)_\infty}g_{\epsilon_1\epsilon_2}(w/z)\,,\label{tau-rho}\\
&g_{\pm,\pm}(z)=1\,,\quad g_{\pm,\mp}(z)=\frac{1-(q_0q_2)^{\mp 1}z}{1-(q_0^{-1}q_2)^{\mp 1}z}\,,\nn
\end{align}
and the contractions with screening currents given in the table below. 
\bigskip

\begin{table}[H]  
\begin{center}
\begingroup
\renewcommand{\arraystretch}{1.5}
\begin{tabular}{|c|c|c|c|c|}
\hline
& $S_0(w)$ &$S_{1}(w)$ &  $S_{2}(w)$ &  $S_{3}(w)$ \\
\hline
$\rho^\pm_{1}(z)$ & $q_3^{\pm 1}z-w$ &$q_2^{\pm 1}z-w$ &$(q_1^{\mp 1}z-w)^{-1}$ &$(q^{\mp 1}z-w)^{-1}$ \\
\hline
$\tau^\pm_{1}(z)$ &$(q_1^{\mp 1}z-w)^{-1}$ &$(q^{\mp 1}z-w)^{-1}$ & $q_3^{\pm 1}z-w$ &$q_2^{\pm 1}z-w$ \\
\hline
\end{tabular} 
\endgroup
\end{center}
\caption{Contractions $\cont{X(z)}{Y(w)}$, where $X(z)=\rho^\pm_1(z),\tau^\pm_1(z)$, $Y(w)=S_i(w)$.
\label{tab:art}}
\end{table}
\bigskip

The above formulas allow us to calculate the commutator between current $S_1(w)$ and the product $\tau_1^{\epsilon_1}(z)\rho_1^{\epsilon_2}(z')$:
\begin{align*}
[S_1(w), \tau_1^{\epsilon_1}(z)\rho_1^{\epsilon_2}(z')]=
w^{-1}\delta\Bigl(q_0^{-\epsilon_1}\frac{z}{w}\Bigr)\bigl(q_0^{-\epsilon_1}z-q_2^{\epsilon_2}z'\bigr)
\cont{\tau_1^{\epsilon_1}(z)}{\rho_1^{\epsilon_2}(z')}\times
:S_1(q_0^{-\epsilon_1}z)\tau_1^{\epsilon_1}(z)\rho_1^{\epsilon_2}(z'):\,.
\end{align*}
We have $:S_1(q_0^{-1}z)\tau_1^+(z):=:S_1(q_0z)\tau_1^-(z):$ from \eqref{A-tau-rho1}.  
Noting the relation 
\begin{align*}
(q_0^{-1}z-q_2^{\pm1}z')\cont{\tau^+_1(z)}{\rho_1^{\pm}(z')}=(q_0z-q_2^{\pm1}z')\cont{\tau^-_1(z)}{\rho_1^{\pm}(z')}\,,
\end{align*}
which follows from \eqref{tau-rho}, we find the commutativity with the screening operator 
\eqref{D21-screening-charge}:
\begin{align*}
[S_1, (\tau_1^+(z)-\tau_1^-(z))\rho_1^\pm(z')]=0\,.
\end{align*}
Similarly we obtain
\begin{align*}
&[S_2, \tau_1^\pm(z)(\rho_1^+(p_1^{1/2}z')-\rho_1^-(p_1^{-1/2}z'))]=0\,,\\
&[S_3, \tau_1^\pm(z)(\rho_1^+(z')-\rho_1^-(z'))]=0\,,\\
&[S_0, (q_0^4\tau_1^+(p_1^{1/2}z)-\tau_1^-(p_1^{-1/2}z))\rho_1^\pm(z')]=0\,.
\end{align*}
It is now obvious that the formal sum
\begin{align*}
\sum_{k\in \Z}(\tau_1^+(z)-\tau_1^-(z))(\rho_1^+(p_1^{-k}z')-\rho_1^-(p_1^{-k}z'))
\end{align*}
commutes with $S_1,S_2,S_3$, and 
\begin{align*}
\sum_{k,l\in \Z}q_0^{-4l}(\tau_1^+(p_1^{-l}z)-\tau_1^-(p_1^{-l}z))(\rho_1^+(p_1^{-k}z')-\rho_1^-(p_1^{-k}z'))
\end{align*} 
commutes with $S_0,S_1,S_2,S_3$. 
Upon taking the residue at $z'=q_0q_2 z$ using \eqref{tau-rho},
the sum over $k\in\Z$ becomes one-sided:
\begin{align}
\sum_{k\ge0}\mathop{\mathrm{res}}_{w=p_1^{-k}q_0q_2z}(\tau_1^+(z)-\tau_1^-(z))(\rho_1^+(w)-\rho_1^-(w))\,.
\label{Tformal}
\end{align}

Rewriting the summand into a normal-ordered form, we arrive at \eqref{T312formal}, \eqref{T312formal2} with the coefficients given by 
\eqref{c312-1}--\eqref{c312-4}.

\subsection{Regularization}\label{subsec:reg}
Infinite sums of vertex operators such as \eqref{T312formal}, \eqref{T312formal2} are only symbolic expressions, 
and do not converge to operators on the Fock space. 
In order to give them a meaning, some regularization is necessary.

As an illustration, let us consider a simpler example of $\hat{\mathfrak{gl}}_{n,n}$ vector $qq$-character 
in Figure \ref{mm hat vector}. 
We choose $\int w^{-(n-1)\delta_{i,0}}S_i(w)dw$ ($0\le i\le 2n-1$) as the screening operators. 
The corresponding $qq$-current is a formal sum
\begin{align*}
T(z)=\sum_{i\in \Z}\sum_{k=0}^{n-1}
\bigl((q_1-q_1^{-1})q^{2k}V_{2k}(q^{-2ni}z)+(q_2-q_2^{-1})q^{2k+1}V_{2k+1}(q^{-2ni}z)\bigr)\,,
\end{align*}
which may be viewed as a Jackson integral. We regularize it by the contour integral
\begin{align*}
T_{\mathrm{reg}}=\int \sum_{k=0}^{n-1}\bigl((q_1-q_1^{-1})q^{2k}V_{2k}(w)+(q_2-q_2^{-1})q^{2k+1}V_{2k+1}(w)\bigr)
\frac{dw}{2\pi i w}\,.
\end{align*}
This formula is nothing but the first member of the integrals of motion
associated with the $W$ algebra of type $\hat \gl_{n,n}$ \cite{FJMV}.

Let us return to D$(2,1;\alpha)$. 
The contraction \eqref{tau-rho} has two series of simple poles on the $w$-plane:
\begin{align}
&w=q_0q_2p_1^{-k}z\quad (k\ge0),\label{first} 
\\
&w=q_0q_2^{-1}p_1^{-k+1}z\quad (k\ge0).\label{second}
\end{align} 
For simplicity of presentation let us assume that $|q_0q_2^{\pm1}|>1>|q_0q_2^{\pm1}p_1|$. 
We wish to interpret \eqref{Tformal} as the result of computing residues of a contour integral.
For that purpose, consider an integral of the form
\begin{align}
T_{\mathrm{reg}}^{312}(z)=
\int_{|w|=|z|}\bigl(\tau^+(z)-\tau_1^-(z)\bigr)\bigl(\rho^+_1(w)-\rho_1^-(w)\bigr)F(w/z)\frac{dw}{2\pi i w}\,.
\label{T312-reg}
\end{align}
We require the kernel function $F(w/z)$ to have two properties: it is a quasi-constant, i.e. $F(p_1w/z)=F(w/z)$, 
and the only poles of the integrand in the region $|w|>|z|$ are simple poles \eqref{first}. 
Inspection of the contractions \eqref{tau-rho} then leads us to the expression
\begin{align*}
F(w/z)=
\Bigl(\frac{w}{z}\Bigr)^{\mu}\frac{\Theta_{p_1}(q_0^{-1}q_2w/z)\Theta_{p_1}(p_1^\mu q_0^3q_2^{-1}w/z)}
{\Theta_{p_1}(q_0q_2w/z)\Theta_{p_1}(q_0q_2^{-1}w/z)}
\end{align*}
where $\mu$ is an arbitrary parameter. 
Formula \eqref{T312-reg} gives a well-defined operator on suitable sectors of 
the Fock space where the integrand comprises integral powers in $w$.
Collecting residues in $|w|>|z|$ and  ignoring the contribution from $w=\infty$, we recover 
 \eqref{Tformal} 
upto an irrelevant overall multiplicative constant. 

As opposed to the sum over $k$, no truncation takes place for the sum over $l\in\Z$. 
We interpret it simply replacing the sum by the integral 
\begin{align}
\widehat{T}_{\mathrm{reg}}^{312}=\int_{|z|=1} 
z^{\frac{2}{k_1+2}}T_{\mathrm{reg}}^{312}(z)\frac{dz}{z}\,.
\label{T312_reg}
\end{align}

Let us compare this formula with the deformed (non-local) integrals of motion associated with
the quantum toroidal $\mathfrak{gl}_2$ algebra \cite{FKSW}, \cite{FJM2}. The first member reads
\begin{align}
\mathbb{G}_{1,1}(\vartheta)&=\int\!\!\int_{|z|=|w|=1}
z^{\frac{2}{k_1+2}}\bigl(\tau_1^+(z)-\tau_1^-(z)\bigr)\bigl(\rho_1^+(w)-\rho_1^-(w)\bigr) 
\label{non-local}
\\
&\times
\Bigl(\frac{w}{z}\Bigr)^{1+\frac{h_{1,0}}{2(k_1+2)}}
\frac{\vartheta(q_0^{1+h_{1,0}/2}w/z)}{\Theta_{p_1}(q_0q_2w/z)\Theta_{p_1}(q_0q_2^{-1}w/z)}\,
\frac{dz}{z}\frac{dw}{w}\,,\nn
\end{align}
where 
\footnote{In \cite{FJM1}, the parameter $\bar{p}_1$ in \cite{FJM2} was set to $1$. For the commutativity with screening operators $S_i$, 
this has to be chosen rather as $q_0^2$.}
\begin{align*}
h_{1,0}=\frac{2(k_2+2)}{k_2-k_3}(s_{1,0}+s_{2,0})+\frac{2(k_3+2)}{k_2-k_3}(s_{1,0}+s_{3,0})\,,
\end{align*}
and
$\vartheta(z)$ is a holomorphic function on $\C^{\times}$ satisfying the quasi-periodicity
$\vartheta(p_1z)=p_1^{-1}z^{-2}\vartheta(z)$.
The space of such functions is two-dimensional. 
Using this freedom one can make the following choice:
\begin{align*}
\vartheta^{\pm}(q_0^{1+h_{1,0}/2}w/z)=
\Theta_{p_1}(q_0^{-1}q_2^{\pm 1}w/z)
\Theta_{p_1}(p_1q_0^{h_{1,0}}q_0^3q_2^{\mp 1} w/z)\,.
\end{align*}
With the identification $\mu=1+h_{1,0}/(2(k_1+2))$, formula \eqref{non-local} match \eqref{T312_reg} and its analog:
\begin{align}
\mathbb{G}_{1,1}(\vartheta^{+})=\widehat{T}^{312}_{\mathrm{reg}}\,,
\quad
\mathbb{G}_{1,1}(\vartheta^{-})=\widehat{T}^{213}_{\mathrm{reg}}\,,
\label{Th-reg}
\end{align}
where we ignore constant multiples.

Alternatively, one can take residues on the poles inside the circle $|w|<|z|$.
To this end we use the commutation relation (in the sense of analytic continuation of matrix elements)
\begin{align*}
\tau_1^{\epsilon_1}(z)\rho_1^{\epsilon_2}(w)=
\rho_1^{\epsilon_2}(w)\tau_1^{\epsilon_1}(z)\times
q_0^{-4}\Bigl(\frac{w}{z}\Bigr)^{\frac{2}{k_1+2}}
\frac{\Theta_{p_1}(q_0q_2^{-1}w/z)\Theta_{p_1}(q_0q_2w/z)}
{\Theta_{p_1}(q_0^{-1}q_2^{-1}w/z)\Theta_{p_1}(q_0^{-1}q_2w/z)}
\end{align*}
to rewrite \eqref{non-local} as 
\begin{align*}
\mathbb{G}_{1,1}(\vartheta)&=q_0^{-4}\int\!\!\int_{|z|=|w|=1}
w^{\frac{2}{k_1+2}}\bigl(\rho_1^+(w)-\rho_1^-(w)\bigr) \bigl(\tau_1^+(z)-\tau_1^-(z)\bigr)
\\
&\times
\Bigl(\frac{w}{z}\Bigr)^{1+\frac{h_{1,0}}{2(k_1+2)}}
\frac{\vartheta(q_0^{1+h_{1,0}/2}w/z)}{\Theta_{p_1}(q_0^{-1}q_2w/z)\Theta_{p_1}(q_0^{-1}q_2^{-1}w/z)}\,
\frac{dz}{z}\frac{dw}{w}\,.
\end{align*}
Choosing $\vartheta$ to be 
\begin{align*}
\widetilde{\vartheta}^{\pm}(q_0^{1+h_{1,0}/2}w/z)=
\Theta_{p_1}(q_0q_2^{\pm 1}w/z)
\Theta_{p_1}(p_1q_0^{1+h_{1,0}}q_2^{\mp 1} w/z)\,,
\end{align*}
computing residues at $w=p_1^k q_0^{-1}q_2^{\mp1}$, $k\ge0$, and ignoring the contribution from $w=0$, 
we find 
\begin{align}
\mathbb{G}_{1,1}(\widetilde{\vartheta}^{+})=\widehat{T}^{130}_{\mathrm{reg}}\,,
\quad
\mathbb{G}_{1,1}(\widetilde{\vartheta}^{-})=\widehat{T}^{031}_{\mathrm{reg}}\,.
\label{Th-reg1}
\end{align}
Together with the symmetry in $1,2,3$, \eqref{Th-reg}--\eqref{Th-reg1} give the regularization of 
the 12 vector $qq$-characters of type $\hat{\textrm D}(2,1;\alpha)$ in Section \ref{sec D hat}.

\subsection{Adjoint representation and its fusion}

For finite $qq$-characters, there is no issue of convergence. 
The current $T^{18}(z)=T_{\chi^{18}}(z)$
associated with the adjoint $qq$-character of D$(2,1;\alpha)$ depicted in Figure \ref{18 pic} reads
\begin{align*}
T^{18}(z)=\sum_{j=1}^{18} c^{18}_jV_{v_j}(z)\,,
\end{align*}
where
\begin{align*}
&c^{18}_1=q^{-3}\,,&& c^{18}_2=-q^{-2}\frac{[k_1+1]}{[k_1+2]}\,,&& c^{18}_5=q^{-1}\,, && c^{18}_8=-\frac{[k_1+1][k_2+1][k_3]}{[k_1+2][k_2+2][k_3+1]}\,,
\\
&c^{18}_{18}=q^3\,,&&c^{18}_{15}=-q^2\frac{[k_1+1]}{[k_1+2]}\,,&&c^{18}_{12}=q\,, &&
c^{18}_{11}=-\frac{[k_1+2][k_2+2][k_3+2]}{[k_1+1][k_2+1][k_3+1]}\,,
\end{align*}
with the notation $[x]=(q^x-q^{-x})/(q-q^{-1})$.
The rest of the coefficients $c^{18}_j$ are given by simultaneously permuting colors $1,2,3$ and 
$k_1,k_2,k_3$. 

The current $T^{18}(z)$ commutes with fermionic screening operators $S_i$, $i=1,2,3$.
It can be shown that $T^{18}(z)$ commutes also with the bosonic screening operators $\rho_i$, $i=1,2,3$,
in \cite{FJM1}.\footnote{In accordance with the definition \eqref{D21-screening-charge} the bosonic screening operators in \cite{FJM1}
should be redefined as $\rho_i=\int \rho_i(w)dw$, $\tau_i=\int w^{\frac{2}{k_i+2}}\tau_i(w)dw$. 
}
 Hence $T^{18}(z)$  belongs to what is termed the deformed $W$-algebra $\mathcal{W}{\textrm D}(2,1;\alpha)$, which is a deformation of the coset theory 
$(\mathfrak{sl}_2)_{k_1}\times(\mathfrak{sl}_2)_{k_2}/(\mathfrak{sl}_2)_{k_1+k_2}$.

One can generate further currents in $\mathcal{W}{\textrm D}(2,1;\alpha)$ by the fusion construction. 
For example, consider the product $T^{18}(z)T^{18}(w)$.
It has simple poles at $w=(q_0q_i)^{\pm2}z$, $i=1,2,3$, and $w=q_0^{\pm2}z$.
Their resiudes give the $qq$-currents $T^{66,1}(z)=T_{\chi^{66,1}}(z)$
and  $T^{130}(z)=T_{\chi^{130}}(z)$ associated with the $66$ and $130$ $qq$-characters 
in Section \ref{130 sec} and Figure \ref{66 pic}, respectively:
\begin{align*}
&T^{66,1}(z)=const. \mathop{\mathrm{res}}_{w=q^2_0q_1^2z}T^{18}(z)T^{18}(w)\,dw
\,,\\
&T^{130}(z)=const. \mathop{\mathrm{res}}_{w=q_0^2z}T^{18}(z)T^{18}(w)\,dw\,.
\end{align*}
It would be interesting to know if  $T^{18}(z)$ generates the entire $\mathcal{W}{\textrm D}(2,1;\alpha)$, 
and whether one can extract the spin four current from it in the conformal limit. 
These are the questions left for further investigation. 

\bigskip


{\bf Acknowledgments.\ }
The research of BF is supported by 
the Russian Science Foundation grant project 16-11-10316. 
MJ is partially supported by 
JSPS KAKENHI Grant Number JP19K03549. 
EM is partially supported by grants from the Simons Foundation  
\#353831 and \#709444.

\vskip 1cm

 \end{document}